\DeclarePairedDelimiter{\ceil}{\lceil}{\rceil}
\DeclarePairedDelimiter{\floor}{\lfloor}{\rfloor}
  \numberwithin{equation}{section} 
  \date{}
\DeclareMathOperator{\one}{\mathbbm{1}} 
\theoremstyle{plain} 
    \newtheorem{theorem}{Theorem}
    \numberwithin{theorem}{section} 
    \newtheorem{lemma}[theorem]{Lemma}
    \newtheorem{proposition}[theorem]{Proposition}
    \newtheorem{corollary}[theorem]{Corollary}
    \newtheorem{claim}[theorem]{Claim}
\theoremstyle{definition} 
    \newtheorem{fact}[theorem]{Fact}
    \newtheorem{remark}[theorem]{Remark}
\DeclareMathOperator{\R}{\mathbb{R}}
\DeclareMathOperator{\Z}{\mathbb{Z}}
\DeclareMathOperator{\N}{\mathbb{N}}
\DeclareMathOperator{\De}{d}
\newcommand{\e}{\mathrm{e}}
\newcommand{\f}{\frac}  
\newcommand{\vr}{\varphi}
\newcommand{\var}{\mathbf{Var}}
\newcommand{\prob}{\mathbf{P}}
\newcommand{\E}{\mathbf{E}}
\newcommand{\la}{\left\langle}
\newcommand{\ra}{\right\rangle}
 \def\paragraph{\@startsection{paragraph}{4}%
 \z@\z@{-\fontdimen2\font}%
   {\normalfont\itshape}}\makeatother
\title[Scaling limit of the membrane model]{\vspace{-2cm}\textbf{The scaling limit of the membrane model} \vspace{0.5cm}}
\author[A. Cipriani]{Alessandra Cipriani}
\address{TU Delft (DIAM), Building 28, van Mourik Broekmanweg 6, 2628 XE, Delft, The Netherlands}
\email{A.Cipriani@tudelft.nl}
\author[B. Dan]{Biltu Dan}
\author[R.~S.~Hazra]{Rajat Subhra Hazra}
\address{ISI Kolkata, 203, B.T. Road, Kolkata, 700108, India}
\email{biltudanmath@gmail.com, rajatmaths@gmail.com}
\date\today
\begin{document}
\begin{abstract}
On the integer lattice we consider the discrete membrane model, a random interface 
in which the field has Laplacian interaction. We prove that, under appropriate rescaling, 
the discrete membrane model converges to the continuum membrane model in $d\ge 2$. Namely, it 
is shown that the scaling limit in $d=2,\,3$ is a H\"older continuous random field, while in $d\ge 4$ the membrane model converges 
to a random distribution. As a by-product of the proof in $d=2,\,3$, we obtain the scaling limit of the maximum. 
This work complements the analogous results of \cite{CarJDScaling} in $d=1$.
\end{abstract}
\keywords{Membrane model, scaling limit, random interface, continuum membrane model, Green's function}
\subjclass[2000]{31B30, 60J45, 60G15, 82C20}

\maketitle
\section{Introduction}

The main object of study in this article is the membrane model (MM), also known as discrete bilaplacian model. 
The membrane model is a special instance of a more general class of interface models 
in which the interaction of the system is governed by the exponential of an Hamiltonian 
function $H:\mathbb{R}^{\mathbb{Z}^{d}}\rightarrow [0,\infty)$. More specifically, random interfaces are fields 
$\varphi=(\varphi_{x})_{x\in\Z^{d}}$, whose distribution
is determined by the probability measure on $\mathbb{R}^{\mathbb{Z}^{d}}$,
$d\ge1$, with density
\[
\prob_{W}(\mathrm{d}\varphi):=\frac{\mathrm{e}^{-H(\varphi)}}{Z_W}\prod_{x\in{W}}\mathrm{d}\varphi_{x}\prod_{x\in\mathbb{Z}^{d}\setminus W}\delta_{0}(\mathrm{d}\varphi_{x}),
\]
where $W\Subset\mathbb{Z}^{d}$ is a finite subset, $\mathrm{d}\varphi_{x}$ is the 
1-dimensional Lebesgue measure on $\R$, $\delta_{0}$ is the Dirac measure at $0,$ and $Z_{W}$ is a normalising constant. 
We are imposing zero boundary conditions i.e. almost surely $\varphi_{x}=0$
for all $x\in\mathbb{Z}^{d}\setminus{W}$, but the definition
holds for more general boundary conditions. A relevant example is where the Hamiltonian is driven by a convex function of the gradient, that is, 
$H(\varphi)= \sum_{x\sim y} V(\varphi_y- \varphi_x)$, $V:\R\to\R$ convex, and the sum being over nearest neighbours. 
The most well-known among these interfaces is the discrete Gaussian free field (DGFF) when $V(x)\propto x^2/2$. The quadratic potential 
allows one to have 
various tools at one's disposal, like the random walk representation of covariances and inequalities like FKG. These tools can be generalised to 
(strictly) convex potentials in the form of the Brascamp--Lieb inequality and the 
Helffer--Sj\"ostrand random walk representation. We refer to \cite{NaddafSpencer,giacomin2001, Funaki, velenikloc} for an overview. 
Outside the convex regime, the non-convex regime was recently studied for example 
in~\cite{cotar2009strict, biskup:spohn}. 

A very natural probabilistic question one can ask oneself is: ``What happens to a random interface 
when one rescales it suitably?''. In $d=1$ in the example of the DGFF the scaling limit is the Brownian bridge. 
In $d\ge 2$ the limit, the continuum Gaussian free field,
is not a random variable and can only be interpreted in the language of distribution theory 
(see for example \cite{Sheff, biskup:survey}). 
The importance of the continuum Gaussian free field in $d=2$ relies on its universality property 
due to conformal invariance, and links it to other stochastic processes like SLE, CLE, 
and Liouville quantum gravity. 
The recent developments concerning the extreme 
value theory of DGFF (and, more generally, log-correlated fields) have shown 
impressive connections also to number theory, branching processes and random matrices.

In comparison to the DGFF, the membrane model has received slightly less attention, 
mainly due to the technical challenges intrinsic of the model. 
It is the Gaussian interface for which
\begin{equation}
H(\varphi):=\frac{1}{2}\sum_{x\in\mathbb{Z}^{d}}|\Delta_1\varphi_{x}|^{2}\label{eq:ham}
\end{equation}
and $\Delta_1$ is the discrete Laplacian defined by 
$$\Delta_1 f(x)= \frac1{2d}\sum_{y\sim x} (f(y)-f(x)),\quad f:\Z^d\to \R, \;x\in\Z^d.$$ 
In case $W=V_N:=[-N,\,N]^d\cap \Z^d$, we will denote the measure $\prob_{V_N}$ with Hamiltonian~\eqref{eq:ham} by $\prob_N$. 
 Introduced by \cite{Sakagawa} in the probabilistic 
literature, the MM looks for certain aspects very similar to the DGFF: 
it is log-correlated in $d=4$, has a supercritical regime in $d\ge 5$ and 
is subcritical in $d\le 3$. In particular in $d=2,\,3,\,4$ there is no thermodynamic limit of the measures $\prob_N$ as $N\uparrow \infty$. The MM displays however certain crucial difficulties, in 
that for example it exhibits no random walk representation, and several 
correlation inequalities are lacking. Nonetheless it is possible, via analytic and numerical methods, to 
obtain sharp results on its behaviour. 
Examples are the study of the entropic repulsion and pinning effects \citep{CaravennaDeuschel_pin, Kurt_d4, Kurt_d5, BCK17, AKW16}, extreme value theory \citep{CCHInterfaces}, 
and connections to other statistical mechanics models \citep{CHR}. 
In this framework we present our work which aims at determining the scaling limit of the bilaplacian model. 
The answer in $d=1$ was given by \cite{CarJDScaling}, who also look at the situation in which a pinning force is added to the 
model. We complement their work by determining the scaling limit in all $d\ge 2$. We also mention~\cite{HrVe}, 
who consider general semiflexible membranes as well with a different scaling approach. 
Their results are derived using an integrated random walk representation which is difficult to adapt in higher dimensions.  
\begin{figure}[ht!]
\includegraphics[scale=0.5]{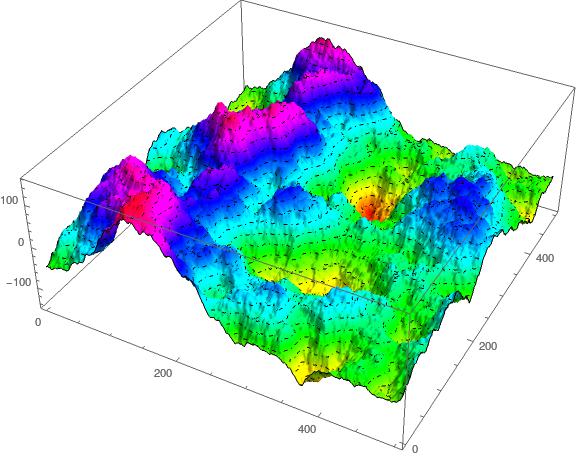}\caption{A sample of the MM in $d=2$ on a box of side-length $500$.}
\end{figure}
The main contributions of this article are as follows: 
\begin{itemize}[label=$\spadesuit$]
\item in $d=2,\,3$ we consider the discrete membrane model 
on a box of side-length $2N$ and interpolate it in a continuous way. We show that the 
process converges to a real-valued process with continuous trajectories and the convergence takes place in the space of 
continuous functions (see Theorem~\ref{thm:low_d}). The utility of this type of convergence is that it yields the scaling limit of the discrete maximum exploiting the 
continuous mapping theorem (Corollary~\ref{cor:max}). While the limiting maximum of the discrete membrane model in $d\ge 5$ was 
derived by~\cite{CCHInterfaces}, in $d=4$ the problem remains open as far as the authors know (tightness can be derived 
from~\cite{ding:roy:ofer}). The limit field also turns to be H\"older continuous with exponent less than $1$ in $d=2$ and less than $1/2$ in $d=3$.

The proof of the above facts is based on two basic steps: tightness and finite dimensional convergence. Tightness 
depends on the gradient estimates of the discrete Green's functions which were very recently derived in~\cite{Mueller:Sch:2017}; 
finite dimensional convergence follows from the convergence of the Green's function.

\item In $d\ge 4$ the limiting process on a sufficiently nice domain $D$ will be a fractional Gaussian field with Hurst parameter $H:=s-d/2$ on $D$. The theory 
of fractional Gaussian fields was surveyed recently in~\cite{LSSW}. The authors there construct the continuum membrane model 
using characteristic functionals. We take here a bit different route and give a representation using the eigenvalues of the 
biharmonic operator in the continuum. 
We remark however that these eigenvalues differ from the square of the Laplacian eigenvalues 
due to boundary conditions. The GFF theory which is based on $H^1_0(D)$ (the 
first order Sobolev space) needs to be replaced by $H^2_0(D)$ (second order Sobolev space). 

Our main result is given in Theorem~\ref{thm:critical_d}. Its proof is again split into two steps: 
finite dimensional convergence and tightness. 
Both steps crucially require an approximation result of PDEs given by \cite{thomee}: there 
he gives quantitative estimates on the 
approximation of solutions of PDEs involving ``nice" elliptic operators by their 
discrete counterparts.
We believe that the techniques used in that article might have implications 
in the development of 
the theory of the membrane model, in particular the idea of tackling boundary values 
by rescaling the standard 
discrete Sobolev norm around the boundary. Especially in $d=4$ this allows one to overcome the difficulty of extending estimates from the bulk 
up to the boundary, which is generally one stumbling block in the study of the MM. 
\item In $d\ge 5$ we also consider the infinite volume membrane model on $\Z^d$.
We show in Lemma~\ref{lemma:psi} that the limit is the fractional Gaussian field of Hurst parameter $H:=2-d/2<0$ on $\R^d$ (see~\cite{LSSW}) and we prove in Theorem~\ref{thm:big_d} the convergence with the help of characteristic functionals. 
We utilise the classical result of \cite{Fernique:1968} (recently extended in the tempered distribution setting by \cite{BOY2017}) 
stating that convergence of tempered distributions is equivalent to that of their characteristic functionals. Technical tools useful for this scope 
are the explicit Fourier transform of the infinite volume Green's function and the Poisson summation formula.
\end{itemize}
We stress that, regardless of the dimension, the field is always rescaled as $N^{(d-4)/2}\vr_{Nx}$ for $x\in N^{-1} \Z^d$. Heuristically, the factor $N^{4-d}$ corresponds to the order of growth of the variance of the model in a box, which we recall here for completeness.
\begin{enumerate}[leftmargin=*,label=\roman*)]
 \item In $d=2,\,3$ if $d(\cdot)$ denotes the distance to the boundary of $V_N$ one has for some constant $C>0$ \cite[Theorem~1.1]{Mueller:Sch:2017}
\[
 |\mathbf{Cov}_N(\vr_x,\,\vr_y)|\le C\min\left(d(x)^{2-d/2}d(y)^{2-d/2},\,\frac{d(x)^2 d(y)^2}{(\|x-y\|+1)^2}\right).
\]
\item In $d=4$ let us denote the bulk of $V_N$ by  $V_N^{\delta}:= \{x\in V_N: d(x)>\delta N\}$ for $\delta\in (0,1)$. Then from \citet[Lemma~2.1]{Cip13} we have: there exists a constant $C(\delta)>0$ such that
 \[
\sup_{x,y\in V_N^{\delta}}\Big|\mathbf{Cov}_N(\vr_x,\,\vr_y)- \frac{8}{\pi^2}(\log N-\log(\|x-y\|+1)) \Big|\le C(\delta).
\]
Asymptotics up to the boundary are not known to the best of the authors' knowledge. The approach of \cite{thomee} allows to circumvent this lack of estimates. 
\item In $d\ge 5$ the infinite volume covariance satisfies \cite[Lemma 5.1]{Sakagawa}
\[|\mathbf{Cov}(\vr_x,\,\vr_y)| \sim C_d \|x-y\|^{4-d} \text{ as $\|x-y\|\to\infty$.}\]
\end{enumerate}
Interestingly this reflects the behavior of the characteristic singular solution (fundamental solution) of the biharmonic equation, which is
\[
 \begin{cases}
  C_d \|x\|^{4-d}& d\text{ odd or }d\text{ even and } d\ge 6\\
  C_d \|x\|^{4-d}\log\|x\|&d\text{ even and }d\le 4.
 \end{cases}
\]
The reader can consult~\cite{MayborodaReg},~\citet[Section~5]{MitreaMitrea} and references therein for sharp pointwise estimates of the Green's function of the bilaplacian in general domains and for regularity properties of the biharmonic Green's function. 

We would like to conclude the Introduction with a few open questions:
\begin{itemize}
\item Is the maximum of the discrete membrane model at the critical dimension scaling to a randomly shifted Gumbel, as predicted by \cite{ding:roy:ofer}?
\item What will the scaling limit be for interfaces with mixed Hamiltonian of the form 
$H(\varphi):=\sum_x V_1(\nabla\varphi_x)+\sum_x V_2(\Delta\varphi_x)$, $V_1,\,V_2$ convex functions (in particular, $V_1\equiv 0$)? Results on these models 
were shown in \citet{Caravenna/Borecki:2010} in $d=1$. 
\end{itemize}
\paragraph{Structure of the paper}  In Section~\ref{sec:d<4} we handle the case $d\in\{2,\,3\}$, while in Section~\ref{sec:limit_des} we treat the finite-volume 
case in $d\ge 4$. In Section~\ref{sec:big_d} we analyse the case of the infinite-volume model in $d\ge 5$. To keep the article self-contained 
in Appendix~\ref{appendix:T} we discuss the results from \cite{thomee} and also deduce a quantitative 
version of the approximation result proved there.

\paragraph{Acknowledgements} The first author is supported by the grant 613.009.102 of the Netherlands Organisation for Scientific Research (NWO) and was supported by the EPSRC grant EP/N004566/1 and the 
Dutch stochastics cluster STAR (Stochastics -- Theoretical and Applied Research) while affiliated with the University of Bath. The 
first and third author acknowledge the MFO grant RiP 1706s. The third author also thanks the NETWORKS grant in the Netherlands 
and the University of Leiden where a part of the work was carried out. 
All authors are very grateful to Vidar Thom\'ee who kindly provided the paper~\citet{thomee}. Thanks also to Francesco Caravenna and Noemi Kurt for 
helpful discussions, and to 
Stefan M\"uller and Florian Schweiger for sharing their article \cite{Mueller:Sch:2017}. F. Schweiger also 
observed that Theorem~\ref{thm:tightness0} yields global H\"older continuity, and also how to improve the H\"older exponent in $d=2$ (Lemma~\ref{moment_bound0}). Finally it is our pleasure to thank an anonymous referee for his/her insightful comments and careful reading which improved the quality and readability of the paper.


%
\subsection*{Notation}We fix a constant $\kappa:=(2d)^{-1}$ throughout the whole paper. In the following $C>0$ always denotes a universal 
constant whose value however may change in each occurence. 
We will use $\stackrel{d}{\to}$ to denote 
convergence in distribution. We denote, for 
any $y=(y_1,\ldots,\,y_d)\in\R^d$, $d\ge 1$, the ``integer part'' of $y$ as $\lfloor y\rfloor=(\lfloor y_1\rfloor,\ldots,\,\lfloor y_d\rfloor)$ and 
similarly $\{y\}=y-\lfloor y\rfloor$ is the ``fractional part'' of $y$. 

\section{Convergence in \texorpdfstring{$d=2,\,3$}{}}\label{sec:d<4}
\subsection{Description of the limiting field}
Let $V=(-1, 1)^d$ and $V_{N}= N\overline V \cap \Z^d$, where $N\in\N$. Let $(\varphi_x)_{x\in V_{N-1}}$ be the MM 
on $V_{N-1}$ and let $G_{N-1}$ be the covariance function for this model. 
It is known~\cite[Section~1]{Kurt_thesis} to satisfy the following discrete boundary value problem for all $x\in V_{N-1}$:
\[
 \begin{cases}
\Delta_1^2 G_{N-1}(x,\,y)=\delta_x(y),&y\in V_{N-1}\\
G_{N-1}(x,\,y)=0, & y\notin V_{N-1}
 \end{cases}.
\]
First we want to define a {\it continuous} interpolation $\Psi_N$ of the discrete field to have convergence in the space of continuous functions. 
There are many ways to define the field $\left( \Psi_{N}(t)\right)_{t\in\overline V}$. 
We take one of the simplest geometric ways which is akin to the interpolation of simple random walk 
trajectories in Donsker's invariance principle. Mind that we take the domain as a square since the recent gradient estimates and convergence of the Green's function of \cite{Mueller:Sch:2017} can be applied easily.
\paragraph{Interpolation in $d=2$.} Let $t=(t_1\,,\,t_2)\in\overline V$. Then $p :=Nt$ lies in the square box with vertices ${ a}=\lfloor Nt\rfloor, {b}= \lfloor Nt\rfloor +e_1, { c}= \lfloor Nt\rfloor +e_1+e_2, { d}= \lfloor Nt\rfloor + e_2$, where $e_1,\,e_2$ are the standard basis vectors of $\R^2$. Suppose ${p}$ is a point in the triangle ${abc}$. Then we can write ${p}=\alpha { a}+\beta {b}+\gamma { c}$ with $ \alpha = 1-\{Nt_1\},\,\beta = \{Nt_1\}-\{Nt_2\},\,\gamma = \{Nt_2\}.$ And in this case we define 
$$\Psi_{N}(t)=\frac\kappa N[ \alpha\vr_{\lfloor Nt\rfloor} + \beta \vr_{\lfloor Nt\rfloor+e_1} + \gamma\vr_{\lfloor Nt\rfloor+e_1+e_2}].$$ Similarly, if $p\in\triangle acd$ then we define
$$\Psi_{N}(t)= \frac\kappa N[\alpha'\vr_{\lfloor Nt\rfloor} + \beta' \vr_{\lfloor Nt\rfloor+e_2} + \gamma'\vr_{\lfloor Nt\rfloor+e_1+e_2}]$$
where $$ \alpha' = 1-\{Nt_2\},\,\beta' = \{Nt_2\}-\{Nt_1\},\,\gamma' = \{Nt_1\}.$$
Thus the interpolated field $\left( \Psi_N(t)\right)_{t\in\overline V}$ is defined by

\begin{align*}
\Psi_{N}(t)&=\frac{\kappa}{N}[\vr_{\floor {Nt}}+\{Nt_i\}\left(\vr_{\floor {Nt}+e_i}-\vr_{\floor {Nt}}\right)\\
&+\{Nt_j\}\left(\vr_{\floor {Nt}+e_i+e_j}-\vr_{\floor {Nt}+e_i}\right)] \,,\quad\text{if}\,\{Nt_i\}\ge\{Nt_j\}
\end{align*}
where $i,\,j\in\{1,\,2\}$, $i\neq j$.

\paragraph{Interpolation in $d=3$.} In $d=3$ the interpolated field can be defined in the same way as above. We use tetrahedrons to define the interpolated field as
\begin{align*}
\Psi_{N}(t)&=\frac{\kappa}{\sqrt N}[\vr_{\floor {Nt}}+\{Nt_i\}\left(\vr_{\floor {Nt}+e_i}-\vr_{\floor {Nt}}\right)\\
&+\{Nt_j\}\left(\vr_{\floor {Nt}+e_i+e_j}-\vr_{\floor {Nt}+e_i}\right)\\
&+\{Nt_k\}\left(\vr_{\floor {Nt}+e_i+e_j+e_k}-\vr_{\floor {Nt}+e_i+e_j}\right)] \,,\quad\,\{Nt_i\}\ge\{Nt_j\}\ge\{Nt_k\}
\end{align*}
where $t=(t_1\,,\,t_2\,,\,t_3)\in\overline V$ and $i,\,j,\,k\in \{1,\,2,\,3\}$ are pairwise different. 

Note that in both $d=2,3$ we have
\[
 \Psi_N(t)=\kappa N^{\frac{d-4}{2}}\varphi_{Nt}, \, \,\, t\in \frac{1}{N}\Z^d.
\]

From the above construction it follows that, for each $N$, $\Psi_N$ is a continuous function on $\overline V$.  This shows that $\Psi_N$ can be considered as a random variable taking values in $(C(\overline V), \mathcal C(\overline V))$ where $C(\overline V)$ is the space of 
continuous functions on $\overline V$ and $\mathcal C(\overline V)$ is its Borel $\sigma$-algebra. Also recall the definition of Green's function: 
the Green's function for the biharmonic operator is $G_V: V\times V\to \R$ such that for every fixed 
$x\in V$, it solves the equation
$$\Delta^2 G_V(x,y)= \delta_x(y), \qquad y\in V,$$
in the space $H^2_0(V)$, the completion of $C_c^\infty(V)$ with respect to the norm 
$$\|f\|_{H^2_0(V)}:=\|\nabla^2 f\|_{L^2(V)}.$$
In the above equations $\Delta^2$, the continuum bilaplacian, acts on the $y$ component, and $\nabla^2$ is the Hessian.
The detailed properties of such spaces are needed in $d\ge4$ so we defer the discussions on 
them to Section~\ref{sec:limit_des}. We denote the continuum Green's function by 
$G_V$ to indicate the dependence on the domain $V$.

We are now ready to state our main result for the case $d=2,\,3$. It shows that the convergence of the above described process occurs in the space of continuous functions.

\begin{theorem}[Scaling limit in $d=2,\,3$]\label{thm:low_d} 
Consider the interpolated membrane model $(\Psi_N(t))_{t\in \overline V}$ in $d=2$ and $3$ as above. 
Then there exists a centered continuous Gaussian process $\Psi$ with covariance $G_V(\cdot,\,\cdot)$ on 
$\overline V$ such that $\Psi_N$ converges in 
distribution to $\Psi$ in the space of all continuous functions on $\overline V$. 
Furthermore the process $\Psi$ is almost surely H\"older continuous with exponent $\eta$, 
for every $\eta\in (0,\,1)$ resp. $\eta\in(0,\,1/2)$ in $d=2$ resp. $d=3$. 
\end{theorem}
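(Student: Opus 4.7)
The plan is to prove convergence of $\Psi_N$ in $C(\overline V)$ via the two standard pillars: \emph{finite-dimensional convergence} and \emph{tightness}. The two key inputs, both supplied by~\cite{Mueller:Sch:2017}, are the pointwise convergence of the rescaled discrete biharmonic Green's function $G_{N-1}$ to the continuum one $G_V$, and the uniform-up-to-the-boundary discrete gradient estimates on $G_{N-1}$. Because each $\Psi_N$ is a centered, piecewise-affine interpolation of a Gaussian field, any weak limit in $C(\overline V)$ is automatically a centered continuous Gaussian process; finite-dimensional convergence forces its covariance to be $G_V$, while the tightness moment bound, passed to the limit, yields H\"older regularity via the Kolmogorov--Chentsov theorem.

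\textbf{Finite-dimensional convergence.} Every finite-dimensional marginal of $\Psi_N$ is centered Gaussian, so convergence reduces to convergence of covariances. Writing $\Psi_N(s)$ and $\Psi_N(t)$ as the explicit convex combinations of rescaled lattice values, one obtains
\[
\mathrm{Cov}(\Psi_N(s),\Psi_N(t)) \;=\; \kappa^2 N^{d-4}\sum_{x,y} \alpha^N_x(s)\,\alpha^N_y(t)\, G_{N-1}(x,y),
\]
where the $\alpha^N_\cdot(\cdot)$ are the simplex weights near $Ns$ and $Nt$. The Green's function convergence theorem of~\cite{Mueller:Sch:2017} gives $\kappa^2 N^{d-4}G_{N-1}(\lfloor Ns\rfloor,\lfloor Nt\rfloor) \to G_V(s,t)$ for $s,t\in V$, and the accompanying gradient estimates ensure the $O(1/N)$ oscillation between nearby lattice points is negligible, yielding $\mathrm{Cov}(\Psi_N(s),\Psi_N(t))\to G_V(s,t)$ on $\overline V\times \overline V$.

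\textbf{Tightness.} By the Kolmogorov--Chentsov tightness criterion in $C(\overline V)$, it suffices to establish, uniformly in $N$, a bound
\[
\E\bigl[|\Psi_N(t)-\Psi_N(s)|^p\bigr]\le C_p\,|t-s|^{d+\alpha}
\]
for some $p\ge 1$ and $\alpha>0$. Since $\Psi_N(t)-\Psi_N(s)$ is Gaussian, Gaussian hypercontractivity reduces this to the second-moment bound
\[
\E\bigl[(\Psi_N(t)-\Psi_N(s))^2\bigr]\le C\,|t-s|^{2\gamma}
\]
for some $\gamma>0$, after choosing $p$ large enough that $p\gamma>d$. Decomposing along the interpolation simplex reduces the left-hand side to a sum of discrete squared increments $\kappa^2 N^{d-4}\,\E[(\vr_x-\vr_y)^2]$ for $x,y$ in a neighbourhood of $Ns,Nt$, which is controlled by the uniform discrete gradient estimates of~\cite{Mueller:Sch:2017} with $\gamma$ arbitrarily close to $1$ in $d=2$ and to $1/2$ in $d=3$. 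These exponents match the pointwise behaviour $G_V(s,t)\sim|s-t|^2\log|s-t|^{-1}$ and $G_V(s,t)\sim|s-t|$ of the continuum biharmonic Green's function in $d=2$ and $d=3$ respectively.

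\textbf{H\"older continuity and main obstacle.} Passing the second-moment bound to the limit by Fatou and using Gaussianity of $\Psi$ yields $\E[|\Psi(t)-\Psi(s)|^p]\le C_p|t-s|^{p\gamma}$ for every $p\ge 1$; Kolmogorov--Chentsov then delivers almost sure H\"older regularity with any exponent $\eta<\gamma$, i.e.\ $\eta<1$ in $d=2$ and $\eta<1/2$ in $d=3$. The principal technical hurdle is precisely that both the Green's function convergence and the gradient estimates must hold \emph{globally} on $V_N$, uniformly up to $\partial V_N$; classical interior estimates alone would not control $\Psi_N$ near the boundary and would block convergence in $C(\overline V)$. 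The recent estimates of~\cite{Mueller:Sch:2017}, which achieve this boundary uniformity for the discrete biharmonic Green's function, are what make the whole argument go through.
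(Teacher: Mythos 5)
Your overall strategy is exactly the paper's: finite-dimensional convergence via the Green's function convergence of \cite{Mueller:Sch:2017}, and tightness in $C(\overline V)$ via a Kolmogorov--Chentsov moment criterion (the paper uses Kallenberg's version, which simultaneously delivers H\"older regularity of the limit), with the crucial boundary-uniform estimates again coming from \cite{Mueller:Sch:2017}. The finite-dimensional step and the overall architecture are correct as you describe them, and you correctly identify that the interpolation errors of order $O(1/N)$ are killed by the gradient bound.

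However, your tightness argument has a real gap. You reduce $\E[(\Psi_N(t)-\Psi_N(s))^2]$ to ``discrete squared increments $\kappa^2 N^{d-4}\E[(\vr_x-\vr_y)^2]$,'' but this handles only the regime $\|t-s\|\lesssim 1/N$, where $x,y$ are nearest neighbours and the uniform bound on $\|\nabla_x\nabla_y G_N\|$ on the diagonal applies (this is exactly Fact~\ref{observation0}). For the regime $1/N \ll \|t-s\|\ll 1$ the naive bound $\E[(\vr_{Nt}-\vr_{Ns})^2]\le 4\max_x G_{N-1}(x,x)=O(N^{4-d})$ only gives $\E[(\Psi_N(t)-\Psi_N(s))^2]=O(1)$, with no decay in $\|t-s\|$ --- not enough for the criterion. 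The paper handles this case (its ``Sub-case 2(a)'') by writing the second difference $G_{N-1}(Nt,Nt)-2G_{N-1}(Nt,Ns)+G_{N-1}(Ns,Ns)$ as a double telescoping sum of mixed discrete second derivatives $D_{i,1}D_{j,2}G_{N-1}$ along a lattice path from $Ns$ to $Nt$, and then uses the \emph{pointwise, off-diagonal} decay $\|\nabla_x\nabla_y G_N(x,y)\|\le C\log\!\left(1+N^2/(\|x-y\|+1)^2\right)$ in $d=2$ (respectively $\le C$ in $d=3$), which after a logarithmic integral gives $\|t-s\|^2(1+\log(1/\|t-s\|))\le C\|t-s\|^{1+b}$ for any $b<1$. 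This telescoping-plus-integral estimate is the technical core of the tightness proof and is not captured by ``discrete squared increments near $Ns,Nt$.'' A minor point: your heuristic ``$G_V(s,t)\sim|s-t|^2\log|s-t|^{-1}$'' should refer to the increment $G_V(s,s)-2G_V(s,t)+G_V(t,t)$, since $G_V$ itself is bounded on the diagonal in $d=2,3$.
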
 

An immediate consequence of the continuous mapping theorem is that, 
as $N\to\infty$, $$\sup_{x\in \overline V} \Psi_N(x)\overset{d}\to \sup_{x\in \overline V} \Psi(x).$$
It is easy to see that for any square or a cube $A$ in the $\frac1{N} \Z^d$ lattice,
$$\sup_{x\in A} \Psi_N(x)= \kappa N^{\frac{d-4}{2}} \max\{\vr_{Nx}: \, x\text{ is a vertex of A}\}.$$
Hence $\sup_{x\in \overline V} \Psi_N(x)= \kappa N^{\frac{d-4}{2}} \max_{x\in V_N} \vr_x$. 
So combining these observations we obtain the scaling limit of the maximum 
of the discrete membrane model in lower dimensions.
\begin{corollary}\label{cor:max}
Let $d\in\{2,\,3\}$ and let $M_N=\max_{x\in V_N} \vr_x$. Then as $N\uparrow \infty$ 
$$\kappa N^{\frac{d-4}{2}}M_N\overset{d}\to \sup_{x\in \overline V} \Psi(x).$$
\end{corollary}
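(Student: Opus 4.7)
The plan is a direct application of the continuous mapping theorem to the functional convergence established in Theorem~\ref{thm:low_d}. The supremum functional $\sup\colon C(\overline V)\to\R$, $f\mapsto \sup_{x\in\overline V} f(x)$, is $1$-Lipschitz with respect to the uniform norm and hence continuous. Since Theorem~\ref{thm:low_d} gives $\Psi_N \overset{d}{\to} \Psi$ in $C(\overline V)$, the continuous mapping theorem yields at once $\sup_{x\in\overline V}\Psi_N(x) \overset{d}{\to} \sup_{x\in\overline V}\Psi(x)$.

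It remains only to identify the left-hand side with $\kappa N^{(d-4)/2} M_N$. By construction, on each simplex of the triangulation used to define $\Psi_N$ (triangles in $d=2$, tetrahedra in $d=3$), the interpolated field is the barycentric combination of its values at the simplex vertices and is therefore affine in $t$. Since an affine function on a simplex attains its maximum at a vertex, the supremum of $\Psi_N$ over $\overline V$ equals $\max\{\Psi_N(x/N)\colon x\in V_N\}$. For a lattice point $t = x/N$ one has $\{Nt\}=0$, so the definition of $\Psi_N$ collapses to $\kappa N^{(d-4)/2}\vr_x$; consequently $\sup_{x\in\overline V}\Psi_N(x) = \kappa N^{(d-4)/2} M_N$. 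Combining the two paragraphs gives the claim. There is no essential obstacle: the result is an immediate consequence of Theorem~\ref{thm:low_d} together with the fact that the barycentric interpolation forces the sup of the continuous field to be realized on the lattice. A very minor point to double-check is that vertices of simplices lying on $\partial V_N$ (or just outside) correspond to the zero boundary values of $\vr$, which however only strengthens the identification $\sup\Psi_N=\kappa N^{(d-4)/2}\max_{x\in V_N}\vr_x$ since these contribute $0$ to the maximum.
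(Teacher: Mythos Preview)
Your proof is correct and follows essentially the same approach as the paper: apply the continuous mapping theorem to the sup functional using Theorem~\ref{thm:low_d}, and then observe that the piecewise-affine interpolation attains its maximum at lattice vertices, where $\Psi_N$ equals $\kappa N^{(d-4)/2}\vr_x$. Your write-up is in fact slightly more explicit than the paper's (you spell out the Lipschitz continuity of $\sup$ and the affine structure on simplices), but the argument is the same.
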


\subsection{Proof of the scaling limit (Theorem~\ref{thm:low_d})}
The proof follows the general methodology of a functional CLT, namely, 
we first show the tightness of the interpolated field and secondly we 
show that the finite dimensional distributions converge. 
As a by-product of the proof, the limiting Gaussian process will be well-defined, that is, its covariance function will be positive definite. 
The finite dimensional convergence follows easily from the very recent work of 
\cite{Mueller:Sch:2017} where the convergence of the discrete Green's function to the continuum one is shown. Tightness also requires the crucial bounds on gradients 
which were derived in the same article. Since we have interpolated the field continuously and 
not piece-wise in boxes or cubes one of the main efforts is to deduce moment bounds from integer lattice points. 

\subsubsection{Tightness and H\"older continuity}
To derive the tightness we need the following ingredients. The first one consists in the following bounds 
for the discrete Green's function and its gradients which follow from 
\citet{Mueller:Sch:2017}. We define the directional derivative of a function $u:\Z^d\to\R$ as
\[
 D_i u(x):=u(x+e_i)-u(x),
\]
and the discrete gradient as
\[
 \nabla u(x)=(D_i u(x))_{i=1}^d.
\]
For functions of several variables we use a subscript
to indicate the variable with respect to which a derivative is taken, for example in $D_{i,\,1}D_{j,\,2}u(x,\,y)$ we take the discrete derivative in the direction $i$ in the variable $x$ and in $j$ in the variable $y$, and $\nabla_x G(x,\,y)$ means we are taking the gradient in the $x$ variable. We now state some bounds on the covariance function and its gradient from \cite{Mueller:Sch:2017}, where they appear in a more general version.
\begin{lemma}[{\citet[Theorem 1.1]{Mueller:Sch:2017}}] \label{DGF_bound0} Let $d\in\{2,\,3\}$. 
	\begin{enumerate}[ref=(\arabic*)]
		\item\label{1.DGF_bound0} For any $x,y\in\Z^d$
		$$\left|G_{N}(x,y)\right|\le C N^{4-d}.$$
		\item\label{2.DGF_bound0}  For any $x,y\in\Z^d$
		$$\|\nabla_x G_N(x,y)\|\le C N^{3-d}.$$
		
		\item\label{3.DGF_bound0} For any $x,y\in\Z^d$ 
		\begin{align*}\|\nabla_x\nabla_yG_{N}(x,y)\|\le \left\lbrace 
		\begin{array}{l l}
		C\log\left(1+\frac{N^2}{(\|x-y\|+1)^2}\right) & \quad \text{if $d=2$}\\
		C & \quad \text{if $d=3$}
		\end{array}. \right.
		\end{align*}
			\end{enumerate}
\end{lemma}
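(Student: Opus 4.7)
My plan is as follows. This lemma is quoted directly from Theorem~1.1 of \cite{Mueller:Sch:2017}, so strictly speaking the proof consists in invoking that reference; still, I sketch the strategy I would follow to prove estimates of this type from scratch.

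The first ingredient is a \emph{scaling reduction}. Set $\widehat G_N(x,y) := N^{d-4} G_{N-1}(\lfloor Nx\rfloor, \lfloor Ny\rfloor)$ for $x,y$ in the rescaled lattice $\tfrac{1}{N} V_{N-1} \subset V = (-1,1)^d$. Under this rescaling the three claimed estimates become: $\widehat G_N$ is uniformly bounded; its discrete gradients on the $\tfrac1N \Z^d$ lattice are uniformly bounded; and its mixed second differences are bounded by $C$ in $d=3$ and by $C\log(1/\|x-y\|)$ in $d=2$. These are precisely the bounds one would expect for the continuum biharmonic Green's function $G_V$ on $V$ with zero Dirichlet data: in $d\in\{2,3\}$ the Sobolev embedding $H^2_0(V) \hookrightarrow C^{0,\alpha}(\overline V)$ gives $\|G_V\|_\infty \le C$, while the fundamental solution $\|x\|^{4-d}\log\|x\|$ in $d=2$ and $\|x\|^{4-d}$ in $d=3$ dictates that $\nabla G_V$ is bounded and $\nabla_x\nabla_y G_V$ has the stated singular behaviour on the diagonal. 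The plan is therefore to establish discrete versions of these continuum bounds uniformly in $N$ and then transfer them back through the scaling.

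Second, I would carry out this transfer via discrete elliptic regularity for $\Delta_1^2$. Discrete Calder\'on--Zygmund estimates together with discrete Sobolev embeddings give $L^p$ control on the finite differences $D_{i,1}G_N$ and $D_{i,1}D_{j,2}G_N$ in terms of the right-hand side $\Delta_1^2 G_N(\cdot,y) = \delta_y$; localising near the diagonal produces the claimed pointwise bounds, the logarithm in $d=2$ arising from the standard Calder\'on--Zygmund singularity of a second-order derivative of the two-dimensional biharmonic kernel.

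The hardest part, and the principal technical contribution of \cite{Mueller:Sch:2017}, is to make all of this \emph{uniform up to the boundary} of $V_{N-1}$. For the Laplacian one could appeal to the random walk representation or to the maximum principle to handle the boundary, but neither is available for the bilaplacian, so one is forced to develop discrete analogues of boundary elliptic regularity for $\Delta_1^2$ from scratch; this is where essentially all the technical difficulty lies.
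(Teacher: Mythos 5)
You are correct that the paper gives no proof of this lemma: it is stated as a quotation of \citet[Theorem~1.1]{Mueller:Sch:2017} and the citation is the entire justification, so your opening sentence already matches what the paper does. Your sketch of how the reference's proof goes (scaling reduction, comparison with the continuum biharmonic fundamental solution $\|x\|^{4-d}$ resp.\ $\|x\|^{4-d}\log\|x\|$, discrete elliptic regularity, and the observation that uniformity up to the boundary is the genuine difficulty since neither a random walk representation nor a maximum principle is available for $\Delta_1^2$) is a sensible outline and consistent with the Introduction's remarks, but since the paper does not reproduce any of that argument there is nothing here to compare it against.
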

Now from the estimate \ref{3.DGF_bound0} and the fact that 
\[
 \E_N\left[\left( \vr_{z+e_i}-\vr_z\right)^2 \right]=D_{i,\,2}D_{i,\,1} G(z, z)
\]
one can observe the following Fact.
\begin{fact}\label{observation0}
	For $z\in\mathbb{Z}^d$ 
	$$\E_N\left[\left( \vr_{z+e_i}-\vr_z\right)^2 \right]\le \left\lbrace 
	\begin{array}{l l}
	C\log N & \quad \text{if $d=2$}\\
	C & \quad \text{if $d=3$}
	\end{array} .\right.$$
\end{fact}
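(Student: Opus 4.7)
The plan is to reduce the claim directly to Lemma~\ref{DGF_bound0}\ref{3.DGF_bound0} through the Gaussian covariance identity spelled out in the sentence preceding the Fact. First, I would expand the square and use that the covariance of the MM equals the Green's function:
\[
 \E_N\!\left[(\vr_{z+e_i}-\vr_z)^2\right] = G_N(z+e_i,z+e_i) - 2\,G_N(z+e_i,z) + G_N(z,z).
\]
Then I would verify that this is precisely the mixed discrete second derivative $D_{i,1}D_{i,2}G_N(z,z)$. Indeed, unfolding the definitions and using symmetry $G_N(x,y)=G_N(y,x)$ gives
\[
 D_{i,2}D_{i,1}G_N(z,z) = G_N(z+e_i,z+e_i) - G_N(z+e_i,z) - G_N(z,z+e_i) + G_N(z,z),
\]
which by symmetry agrees with the displayed expansion.

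Once this identity is in place, the Fact follows by applying Lemma~\ref{DGF_bound0}\ref{3.DGF_bound0} at the diagonal $x=y=z$. For $d=3$ the bound is simply $C$, yielding the claim immediately. For $d=2$, setting $\|x-y\|+1=1$ in the lemma gives $C\log(1+N^2)$, which is absorbed into a universal constant times $\log N$ for $N\geq 2$ (and trivially bounded by a constant for $N=1$). There is no real obstacle, since the quoted gradient estimates already handle the most singular case of coinciding arguments; the only bookkeeping needed is the symmetrization argument used to convert the two-sided discrete derivative into the variance of an increment.
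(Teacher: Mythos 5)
Your proof is correct and follows the paper's own argument: the paper likewise notes that $\E_N[(\vr_{z+e_i}-\vr_z)^2]=D_{i,2}D_{i,1}G_N(z,z)$ and then applies Lemma~\ref{DGF_bound0}\ref{3.DGF_bound0} at the diagonal, so you have simply spelled out the bookkeeping the paper leaves implicit.
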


Next we want to show that the sequence $\{\Psi_N\}_{N\in\N}$ is tight in $C(\overline V )$. 
We use the following theorem, whose proof follows from that of Theorem 14.9 of \cite{kallenberg:foundations}.
\begin{theorem}\label{thm:tightness0}
	Let $X^1, X^2, \ldots $ be continuous processes on $\overline V$ with values in a complete separable metric space $(S, \rho)$. Assume that $ (X_0^n)$ is tight in $S$ and that for constants $\alpha, \beta>0$
	\begin{equation}\label{eq:kallenberg:condition} 
	\E[ \rho(X_s^n, X_t^n)^\alpha ]\le C \|s-t\|^{d+\beta}, \quad s,\, t\in \overline V
	\end{equation}
	uniformly in $n$. Then $(X^n)$ is tight in $ C(\overline V, S)$ and for every $c\in (0, \beta/\alpha)$ 
	the limiting processes are almost surely H\"older continuous with exponent $c$.
\end{theorem}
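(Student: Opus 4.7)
The strategy is the classical Kolmogorov--Chentsov chaining argument, adapted to the metric-space--valued setting and combined with Arzelà--Ascoli for tightness in $C(\overline V, S)$. The main point is that \eqref{eq:kallenberg:condition} is exactly the moment condition that lets us control increments across a dyadic hierarchy, uniformly in $n$.

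First I would set up a dyadic grid in $\overline V$. For each integer $k\ge 0$, let $D_k := 2^{-k}\Z^d \cap \overline V$ and call two points in $D_k$ neighbours if their $\ell^\infty$-distance is exactly $2^{-k}$. Define the maximal $k$-scale increment
\[
  K_k^n := \max_{\substack{s,t\in D_k\\ \|s-t\|_\infty = 2^{-k}}} \rho(X_s^n, X_t^n).
\]
Since $|D_k| \le C\, 2^{kd}$ and each pair has at most $2d$ neighbours, a union bound combined with the hypothesis gives
\[
  \E\bigl[(K_k^n)^\alpha\bigr] \;\le\; C\, 2^{kd}\cdot 2^{-k(d+\beta)} \;=\; C\, 2^{-k\beta},
\]
uniformly in $n$. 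Fixing $c\in(0,\beta/\alpha)$, this yields $\sum_{k\ge 0} 2^{k c\alpha}\,\E[(K_k^n)^\alpha]<\infty$ uniformly in $n$, which is the crucial summability.

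Next I would perform the standard chaining step. Given $s,t\in \overline V$ with $\|s-t\|_\infty\le 2^{-m}$, one approximates $s,t$ by successive dyadic refinements $s_m,s_{m+1},\dots$ with $s_k\in D_k$, and similarly for $t$. A telescoping argument, together with continuity of $X^n$, produces the deterministic bound
\[
  \rho(X_s^n, X_t^n) \;\le\; C\sum_{k\ge m} K_k^n,
\]
and hence, for every $c\in (0,\beta/\alpha)$, the Hölder seminorm satisfies
\[
  [X^n]_c \;:=\; \sup_{s\ne t}\frac{\rho(X_s^n,X_t^n)}{\|s-t\|_\infty^c} \;\le\; C\sum_{k\ge 0} 2^{kc} K_k^n.
\]
Taking the $L^\alpha$-norm (via Minkowski, using $c\alpha<\beta$) shows that $\sup_n \E[[X^n]_c^\alpha]<\infty$. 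In particular, for any fixed sample path in the limit one obtains Hölder continuity with exponent $c$ by the same chaining applied pathwise.

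Finally, tightness. By the Arzelà--Ascoli theorem in $C(\overline V, S)$, a set is relatively compact iff the values at some fixed point lie in a compact set and the family is equicontinuous. Given $\eps,\eta>0$, use tightness of $(X_0^n)$ to find a compact $K\subset S$ with $\prob(X_0^n\notin K)<\eta/2$ for all $n$. For the equicontinuity modulus, Markov's inequality combined with $\sup_n \E[[X^n]_c^\alpha]<\infty$ gives
\[
  \sup_n \prob\!\left(\sup_{\|s-t\|\le\delta}\rho(X_s^n,X_t^n)>\eps\right)
  \;\le\; \sup_n \frac{\delta^{c\alpha}\,\E[[X^n]_c^\alpha]}{\eps^\alpha}\;\longrightarrow\;0
\]
as $\delta\downarrow 0$. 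Combining these two estimates and invoking Arzelà--Ascoli exhibits a compact subset of $C(\overline V,S)$ carrying mass at least $1-\eta$ for every $X^n$, giving tightness. The Hölder exponent assertion then follows from the pathwise bound derived above applied to any weak limit. The only mild subtlety is handling the metric space $S$ (no linear structure, so one works directly with $\rho$), but this is absorbed by the union bound/chaining formulation; the geometry of $\overline V$ as a cube makes the dyadic partition trivial to set up.
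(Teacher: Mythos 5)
The paper gives no proof of its own for this lemma; it simply refers to the proof of Theorem~14.9 in Kallenberg, which is the Kolmogorov--Chentsov chaining argument. Your dyadic-grid chaining is precisely that argument and the estimate $\E[(K_k^n)^\alpha]\le C2^{-k\beta}$ together with the telescoping bound $\rho(X_s^n,X_t^n)\le C\sum_{k\ge m}K_k^n$ and the resulting control of $[X^n]_c$ are correct. Two points need attention. First, a minor one: when you pass to $\bigl(\E[[X^n]_c^\alpha]\bigr)^{1/\alpha}$ by ``Minkowski'' you implicitly assume $\alpha\ge 1$; for $\alpha\in(0,1)$ one should instead use $\bigl(\sum_k a_k\bigr)^\alpha\le\sum_k a_k^\alpha$ directly on $\E[[X^n]_c^\alpha]$, which gives the same convergent geometric series under $c<\beta/\alpha$.

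The second point is a genuine gap in the tightness step. You write that a set $A\subset C(\overline V,S)$ is relatively compact iff ``the values at some fixed point lie in a compact set and the family is equicontinuous.'' For a general complete separable metric space $S$ this is not the Arzel\`a--Ascoli theorem: one needs $\{f(t):f\in A\}$ to be relatively compact in $S$ for \emph{every} $t\in\overline V$ (or a dense set of $t$'s). An equicontinuity/H\"older bound together with $f(0)\in K$ does not place $f(t)$ inside a compact set when $S$ is not locally compact, because a bounded $\rho$-tube around a compact set need not be relatively compact; so the set $\{f:f(0)\in K,\ [f]_c\le M\}$ that your argument implicitly produces is \emph{not} compact in $C(\overline V,S)$. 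In fact the statement as written is false in that generality: take $S=\ell^2$, $\overline V=[0,1]$, $d=1$, and the deterministic processes $X^n_t:=t\,e_n$ ($e_n$ the standard basis). Then $X^n_0\equiv 0$ is tight, $\rho(X_s^n,X_t^n)=|s-t|$ so the moment condition holds with, say, $\alpha=3$, $\beta=1$, yet $\{X^n\}$ is not tight in $C([0,1],\ell^2)$ since $\{X^n_1\}=\{e_n\}$ has no convergent subsequence. The resolution is either to assume, as in the actual Kallenberg statement, pointwise tightness of $(X_t^n)_n$ for all $t$ (or a dense set), or to restrict to $S$ in which closed bounded sets are compact. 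In the paper's application $S=\R$, so the tube $\{y:\operatorname{dist}(y,K)\le M\}$ is compact and the argument goes through unchanged; you should however say this explicitly rather than invoke a version of Arzel\`a--Ascoli that does not hold for general Polish $S$.
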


Observe that the process $(\Psi_N(t))_{t\in \overline V}$ is Gaussian, and since from 
Lemma~\ref{DGF_bound0} it follows that $G_{N-1}(0,0) \le N^{4-d}$, it is easy to see that $(\Psi_N(0))$ is tight. Again, using the properties of 
Gaussian laws, to show~\eqref{eq:kallenberg:condition} it is enough to show the following the lemma. 
\begin{lemma}\label{moment_bound0} 
Let $b\in (0,\,1)$ in $d=2$ and $b=0$ in $d=3$. Then there exists a constant $C>0$ (which depends on $b$ in $d=2$) such that
	\begin{align}
	\E\left[ \left| \Psi_N(t)- \Psi_N(s)\right|^2\right]\le C \lVert t-s\rVert^{1+b} \label{eq:moment0}
	\end{align}
	for all  $t,s\in\overline V$, uniformly in $N$.
\end{lemma}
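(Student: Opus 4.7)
The plan is to bound the variance of the increment by splitting $\overline V \times \overline V$ into a microscopic regime $\|Nt-Ns\|\le 1$ and a macroscopic regime $\|Nt-Ns\|>1$. In the microscopic regime $t$ and $s$ share at most two adjacent simplices, and the piecewise-affine definition of $\Psi_N$ together with the fact that the barycentric coordinates sum to one expresses $\Psi_N(t)-\Psi_N(s)$ as a sum of boundedly many nearest-neighbour increments $\varphi_{z+e_i}-\varphi_z$ weighted by coefficients of size $O(\kappa N^{(d-2)/2}\|t-s\|)$. Applying Fact~\ref{observation0} then gives
\[
\E_N[(\Psi_N(t)-\Psi_N(s))^2]\le C\, N^{d-2}\|t-s\|^2\,\omega(N),
\]
with $\omega(N)=\log N$ in $d=2$ and $\omega(N)=1$ in $d=3$. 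Since $\|t-s\|\le C/N$ here, this is $\le C\|t-s\|^{1+b}$ uniformly in $N$, the logarithm in $d=2$ being absorbed via $\log N\cdot N^{-(1-b)}\to0$ for every $b\in(0,1)$.

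For the macroscopic regime I set $\tilde t=\lfloor Nt\rfloor/N$, $\tilde s=\lfloor Ns\rfloor/N$, so that $\|t-\tilde t\|, \|s-\tilde s\|\le \sqrt{d}/N\le C\|t-s\|$. By Minkowski and the microscopic bound above it suffices to control the lattice-to-lattice increment $\Psi_N(\tilde t)-\Psi_N(\tilde s)$. Writing $x:=N\tilde t$, $y:=N\tilde s$, $k:=\|x-y\|_1\asymp N\|t-s\|$ and picking an axis-aligned minimal nearest-neighbour path $y=z_0,z_1,\ldots,z_k=x$, two telescopings (one in each variable of $G_N$) yield
\[
G_N(x,x)-2G_N(x,y)+G_N(y,y)=\sum_{j,j'=0}^{k-1}D_{i_{j'},1}D_{i_j,2}G_N(z_{j'},z_j).
\]

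In $d=3$ a single telescoping already suffices: $\|\nabla G_N\|_\infty\le C$ from Lemma~\ref{DGF_bound0}\ref{2.DGF_bound0} gives $\E_N[(\varphi_x-\varphi_y)^2]\le 2Ck$, and multiplying by $\kappa^2 N^{-1}$ produces $C\|t-s\|$, as required (matching the Hurst exponent $1/2$ of the limiting field). In $d=2$ that crude argument only yields exponent $1$, so I invoke the double telescoping together with the mixed second-derivative bound Lemma~\ref{DGF_bound0}\ref{3.DGF_bound0}: along an axis-aligned path $\|z_j-z_{j'}\|\ge c|j-j'|$, hence
\[
\sum_{j,j'=0}^{k-1}\log\!\Bigl(1+\frac{N^2}{(|j-j'|+1)^2}\Bigr)\le Ck^2\log(1+N/k).
\]
After the prefactor $\kappa^2 N^{-2}$ this reads $\E_N[(\Psi_N(\tilde t)-\Psi_N(\tilde s))^2]\le C\|t-s\|^2\log(1+1/\|t-s\|)\le C_b\|t-s\|^{1+b}$ for every $b\in(0,1)$, using $\sup_{x\in(0,C]}x^{1-b}\log(1/x)<\infty$. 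The main technical point is the $d=2$ estimate: beating the exponent $1$ requires pairing the two discrete derivatives through both variables of $G_N$ and then a non-termwise treatment of the resulting log-sum; this is precisely where the sharp gradient estimates of~\cite{Mueller:Sch:2017} (which have no random-walk analogue for the bilaplacian) are indispensable.
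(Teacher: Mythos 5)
Your proposal is correct and follows essentially the same approach as the paper: a microscopic regime handled via the piecewise-affine structure and Fact~\ref{observation0}, a macroscopic lattice-to-lattice regime handled via single telescoping with Lemma~\ref{DGF_bound0}\ref{2.DGF_bound0} in $d=3$ and double telescoping with the mixed-derivative bound Lemma~\ref{DGF_bound0}\ref{3.DGF_bound0} plus an integral estimate of the log-sum in $d=2$, and a reduction of off-lattice points to lattice ones by the triangle inequality. The only cosmetic difference is that the paper explicitly inserts intermediate points on simplex diagonals while you appeal to ``boundedly many adjacent simplices,'' which amounts to the same thing.
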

This Lemma will immediately give \eqref{eq:kallenberg:condition} and hence the H\"older continuity of the limiting field.
\begin{corollary}\label{cor:holder}
The field $\Psi$ is almost surely H\"older continuous with exponent $\eta$, where $\eta\in (0,\,1)$ in $d=2$ and $\eta\in(0,\,1/2)$ in $d=3$.
\end{corollary}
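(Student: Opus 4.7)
The plan is to derive the H\"older continuity of $\Psi$ as a direct by-product of Lemma~\ref{moment_bound0} combined with Theorem~\ref{thm:tightness0}, using Gaussianity to bootstrap the second-moment estimate into bounds on arbitrarily high moments.

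First, I would observe that for any $t,s\in\overline V$ the increment $\Psi_N(t)-\Psi_N(s)$ is a centered Gaussian random variable, since by construction it is a finite linear combination of the Gaussian field $\vr$. Applying the standard Gaussian moment identity, for every integer $k\ge 1$,
\[
\E\bigl[|\Psi_N(t)-\Psi_N(s)|^{2k}\bigr]=(2k-1)!!\,\bigl(\E[(\Psi_N(t)-\Psi_N(s))^2]\bigr)^{k}\le C_k\|t-s\|^{k(1+b)},
\]
uniformly in $N$, where the last inequality invokes Lemma~\ref{moment_bound0}.

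Next, I would apply Theorem~\ref{thm:tightness0} with parameters $\alpha=2k$ and $\beta=k(1+b)-d$, choosing $k$ large enough to ensure $\beta>0$. The theorem then guarantees that every weak subsequential limit of $\{\Psi_N\}$ is almost surely H\"older continuous with any exponent strictly less than
\[
\frac{\beta}{\alpha}=\frac{k(1+b)-d}{2k}=\frac{1+b}{2}-\frac{d}{2k}.
\]
In $d=2$, where $b\in(0,1)$ is arbitrary, letting $b\nearrow 1$ and $k\to\infty$ drives the ratio above arbitrarily close to $1$; in $d=3$, where $b=0$ is fixed, letting $k\to\infty$ drives it arbitrarily close to $1/2$. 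Concretely, given any $\eta$ in the target range, I would pick $k$ (and in $d=2$ also $b$) so that $(1+b)/2 - d/(2k)>\eta$, and conclude H\"older continuity with exponent $\eta$.

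Finally, since the main Theorem~\ref{thm:low_d} identifies the limit $\Psi$ uniquely through its covariance $G_V$, all subsequential limits coincide with $\Psi$, so the H\"older statement transfers directly to $\Psi$. There is no serious obstacle here: the Gaussian moment formula makes the moment-upgrade step automatic, and Theorem~\ref{thm:tightness0} is tailor-made to convert polynomial moment bounds into H\"older regularity of the limit with optimized exponent.
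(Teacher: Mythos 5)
Your argument is correct and follows exactly the route taken in the paper: both use Gaussianity of the increments to upgrade the second-moment bound from Lemma~\ref{moment_bound0} to moments of arbitrarily high order, then feed these into Theorem~\ref{thm:tightness0} with $\beta/\alpha$ approaching $(1+b)/2$, and take $b\nearrow 1$ in $d=2$ (resp.\ $b=0$ in $d=3$). The only cosmetic difference is that you restrict to even integer moments $\alpha=2k$ and spell out the Gaussian moment identity explicitly, whereas the paper allows arbitrary $\alpha$; this has no bearing on the conclusion.
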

\begin{proof}
We note that for $t,\,s\in\overline V$, the random variable $\Psi_N(t)-\Psi_N(s)$ is Gaussian. Therefore 
using Lemma \ref{moment_bound0} we have, for any $\alpha$ such that ${(1+b)\alpha}/2 >d$, that there 
is a constant $C$ such that the following holds uniformly in $N$ with $\beta:={(1+b)\alpha}/2-d:$ 
$$\E[ \left|\Psi_N(t)-\Psi_N(s)\right|^{\alpha} ]\le C \|t-s\|^{d+\beta}, \quad s,\, t \in \overline V.$$
The conclusion follows then from Theorem~\ref{thm:tightness0}.
\end{proof}
Now we show the proof of the Lemma.
\begin{proof}[Proof of Lemma~\ref{moment_bound0}]First we consider $d=2$. We fix a $b\in(0,\,1)$ and 
let $t,\,s\in\overline V$. We split the proof into a few cases.
\begin{enumerate}[label=\textbf{Case \arabic*}:,wide=\parindent]
\item Suppose $t, s$ belong to the same smallest square box in the lattice $\frac1N\Z^2$. 	
First assume $\lfloor Nt\rfloor = \lfloor Ns\rfloor$, that is, 
the points are in the interior and not touching the top and right boundaries. In this case if we have $\{Nt_1\}\ge\{Nt_2\}$ and $\{Ns_1\}\ge\{Ns_2\}$. Then by definition of the interpolation we have
	\begin{align*}
	\Psi_N(t)-\Psi_N(s)&=\kappa[\left(t_1-s_1\right)\left(\vr_{\lfloor Nt\rfloor+e_1}-\vr_{\lfloor Nt\rfloor}\right) \\
	&+ \left( t_2-s_2\right)\left(\vr_{\lfloor Nt\rfloor+e_1+e_2}-\vr_{\lfloor Nt\rfloor+e_1}\right)].
	\end{align*}
So from the above expression we have	
	\begin{align*}
	&\E\left[\left( \Psi_N(t)- \Psi_N(s)\right)^2\right]\le 2\kappa^2[(t_1-s_1)^2 \E[ \left( \vr_{\floor{Nt}+e_1}- \vr_{\floor{Nt}}\right)^2]\\
	&\qquad + (t_2-s_2)^2\E[\left( \vr_{\floor{Nt}+e_1+e_2}-\vr_{\floor{Nt}+e_1}\right)^2]].
	\end{align*}
	Now from Fact~\ref{observation0} and $\left|t_1-s_1\right|, \left|t_2-s_2\right|<N^{-1}$ we obtain \eqref{eq:moment0}. The argument is similar if one has $\{Nt_1\}\le\{Nt_2\}$ and $\{Ns_1\}\le\{Ns_2\}$.\\
	
	Again if $\{Nt_1\}\ge\{Nt_2\}$ and $\{Ns_1\}<\{Ns_2\}$, or if $\{Nt_1\}<\{Nt_2\}$ and $\{Ns_1\}\ge\{Ns_2\}$ then we consider the point $u$ on the line segment joining $t$ and $s$ such that $Nu$ 
	is the point of intersection of the line segment joining $Nt, Ns$ and the 
	diagonal joining $\floor{Nt}, \floor{Nt}+e_1+e_2.$ Then we have using the above 
	computations
	\begin{align*}
	\E\left[ \left| \Psi_N(t)- \Psi_N(s)\right|^2\right]&\le2\E\left[ \left| \Psi_N(t)- \Psi_N(u)\right|^2\right] + 2\E\left[ \left| \Psi_N(u)- \Psi_N(s)\right|^2\right]\\
	&\le C\left[\lVert t-u\rVert^{1+b}+\lVert u-s\rVert^{1+b}\right]\le C\lVert t-s\rVert^{1+b}.
	\end{align*} 
Now the other case, that is, when $\lfloor Nt\rfloor \neq \lfloor Ns\rfloor$ follows from above by continuity.

\item 
Suppose $t, s$ do not belong to the same smallest square box in the lattice $\frac1N\Z^2$. In this case if $\|t-s\| \le 1/N$ then one can obtain \eqref{eq:moment0} by the above case and a suitable point in between. So we assume $\|t-s\|>1/N$. Depending on whether $Nt$ and $Ns$ belong to the discrete lattice we split the proof in two broad cases. We will use bounds on mixed discrete 
derivatives for a better control of finite differences of the Green's function.
\begin{enumerate}[label=\underline{Sub-case 2 (\alph*)},ref=Sub-case 2 (\alph*),wide=\parindent]
\item\label{2(a)} Suppose $t,s\in\frac 1N\Z^2$. Then
	\begin{align*}
	\E\left[ \left| \Psi_N(t)- \Psi_N(s)\right|^2\right]&=\frac{\kappa^2}{N^2}\left[ G_{N-1}(Nt,Nt)-G_{N-1}(Ns,Nt)\right.\\
	&\left.-G_{N-1}(Nt,Ns)+G_{N-1}(Ns,Ns)\right].
	\end{align*}
We assume without loss of generality $Ns_1\le Nt_1,\, Ns_2\le Nt_2$. Also denote $M:= N(t_1-s_1+t_2-s_2)$ and let $(u_i)_{i=0}^M$ be 
such that $u_i=s+i/N e_1$ for $i\le N(t_1-s_1)$ and $u_i=s+(t_1-s_1)e_1+(i/N-(t_1-s_1))e_2$ for $i>N(t_1-s_1)$. Then
	\begin{align*}
	\E&\left[ \left| \Psi_N(t)- \Psi_N(s)\right|^2\right]
	=\frac{\kappa^2}{N^2}\sum_{i=0}^{M-1}\left[G_{N-1}(Nu_{i+1},Nt)-G_{N-1}(Nu_i,Nt) \right]\\
	& -\left[ G_{N-1}(Nu_{i+1},Ns)-G_{N-1}(Nu_i,Ns) \right]\\
	& =\frac{\kappa^2}{N^2}\sum_{i,\,j=0}^{M-1}\left[ G_{N-1}(Nu_{i+1},Nu_{j+1})-G_{N-1}(Nu_{i+1},Nu_j)\right.\\
	& \left.-G_{N-1}(Nu_i,Nu_{j+1})+G_{N-1}(Nu_i,Nu_j)\right]{\le} \frac{C}{N^2}\sum_{i,\,j=0}^{M-1}\log\left(1+\frac{N^2}{(\|Nu_i-Nu_j\|+1)^2}\right)\\
	\end{align*}
	where we have used Lemma~\ref{DGF_bound0}~\ref{3.DGF_bound0} in the last inequality and we have 
	absorbed the constant $\kappa^2$ in the generic constant $C$. Now using the definition of $u_i,\,u_j$ the right-hand side above 
	is bounded above by
	\begin{align*}
	\frac{C}{N^2}&\sum_{i,\,j=0}^{M-1}\log\left(1+\frac{N^2}{\Big(\frac{\left|i-j\right|}{\sqrt 2}+1\Big)^2}\right)\le \frac{C}{N^2}\sum_{i,\,j=0}^{M-1}\log\left(1+\frac{N}{(\left|i-j\right|+1)}\right)\\
	&\le \frac{CM}{N^2}\sum_{l=-M+1}^{M-1}\log\left(1+\frac{N}{(\left|l\right|+1)}\right)\le 
	\frac{CM}{N}\int_{0}^{\frac{M}{N}}\log\left(1+\frac 1{x}\right)\mathrm{d}x\\
	&\le C \Big(\frac{M}{N}\Big)^2\left[1+\log\Big(1+\frac{N}{M}\Big)\right]\le C \|t-s\|^{1+b}.
	\end{align*}
\item\label{2(b)}Suppose at least one between $t,s$ does not belong to $\frac 1N\Z^2$. Then
	\begin{align*}
	\E\left[ \left| \Psi_N(t)-\right.\right.&\left. \left.\!\Psi_N(s)\right|^2\right]\le 3\E\left[ \left| \Psi_N\left(t\right)- \Psi_N\left(\frac{\floor{Nt}}{N}\right)\right|^2\right]\\
	&+3\E\left[ \left| \Psi_N\left(\frac{\floor{Nt}}{N}\right)- \Psi_N\left(\frac{\floor{Ns}}{N}\right)\right|^2\right]+3\E\left[ \left| \Psi_N\left(\frac{\floor{Ns}}{N}\right)- \Psi_N\left(s\right)\right|^2\right]\\
	&\le C\left[\left\|t-\frac{\floor{Nt}}{N}\right\|^{1+b}+\left\|\frac{\floor{Nt}}{N}-\frac{\floor{Ns}}{N}\right\|^{1+b}+
	\left\|\frac{\floor{Ns}}{N}-s\right\|^{1+b}\right]\le C\|t-s\|^{1+b}.
	\end{align*}	
Note that for the last inequality we have used our assumption $\|t-s\|>1/N$.	
\end{enumerate}

\end{enumerate}
Now we consider $d=3$. Let $t,\,s\in\overline V$. We split the proof into cases similar to those of $d=2$. We give a brief description.
For Case 1, suppose $t, s$ belong to the same smallest cube in the lattice $\frac1N\Z^3$. First assume $\lfloor Nt\rfloor = \lfloor Ns\rfloor$. In this case if $\{Nt_1\}\ge\{Nt_2\}\ge\{Nt_3\}$ and $\{Ns_1\}\ge\{Ns_2\}\ge\{Ns_3\}$ then it follows from the definition of interpolation
	\begin{align*}
	\E\left[\left( \Psi_N(t)- \Psi_N(s)\right)^2\right]&\le 3N\kappa^2[(t_1-s_1)^2 \E[ \left( \vr_{\floor{Nt}+e_1}- \vr_{\floor{Nt}}\right)^2]\\
	&+ (t_2-s_2)^2\E[\left( \vr_{\floor{Nt}+e_1+e_2}-\vr_{\floor{Nt}+e_1}\right)^2]\\
	&+(t_3-s_3)^2\E[\left( \vr_{\floor{Nt}+e_1+e_2+e_3}-\vr_{\floor{Nt}+e_1+e_2}\right)^2]].
	\end{align*}
Now from Fact~\ref{observation0} and the fact that $\left|t_1-s_1\right|, \left|t_2-s_2\right|, \left|t_3-s_3\right|<1/N$ we have ~\eqref{eq:moment0}.
Note that this is a particular case of $t,s$ lying in the same tetrahedral portion of the cube. Hence if $t,\,s$ lie in the 
same tetrahedral portion of the cube then by similar arguments~\eqref{eq:moment0} holds.
If $t,s$ do not lie in the same tetrahedral part then we consider points 
(at most 3) on the line segment joining them such that two consecutive between $t$, 
the selected points and $s$ lie in the same tetrahedral part. 
Then applying the previous argument we can obtain~\eqref{eq:moment0}.
Now the case when $\lfloor Nt\rfloor \neq \lfloor Ns\rfloor$ follows by continuity.
 For Case 2, we describe Sub-case 2(a) which turns out to be simpler in $d=3$. The rest of the argument is similar to that in $d=2$.
Suppose $t,s\in\frac 1N\Z^3$ with $\|t-s\|>1/N$.  Then
	\begin{align*}
	\E\left[ \left| \Psi_N(t)- \Psi_N(s)\right|^2\right]=\frac{\kappa^2}{N}\left[ G_{N-1}(Nt,Nt)-G_{N-1}(Ns,Nt)-G_{N-1}(Nt,Ns)+G_{N-1}(Ns,Ns)\right]
	\end{align*}
		Without loss of generality assume $Ns_1\le Nt_1,\, Ns_2\le Nt_2,\,Ns_3\le Nt_3$. Then
	\begin{align*}
	&G_{N-1}(Nt,Nt)-G_{N-1}(Ns,Nt)=\sum_{i=1}^{N(t_1-s_1)} D_{1,\,1} G_{N-1}(Ns+(i-1)e_1,Nt) \\
	&+\sum_{j=1}^{N(t_2-s_2)} D_{2,\,1} G_{N-1}(Ns+N(t_1-s_1)e_1+(j-1)e_2,Nt)\\
	&+\sum_{l=1}^{N(t_3-s_3)} D_{3,\,1} G_{N-1}(Ns+N(t_1-s_1)e_1+N(t_2-s_2)e_2+(l-1)e_3,Nt)\\
	&\overset{\ref{2.DGF_bound0}}{\le} C\left(N(t_1-s_1)+ N(t_2-s_2)+N(t_3-s_3)\right)\le CN\|t-s\|.
	\end{align*}
	Hence \eqref{eq:moment0} follows.
 \end{proof}

\subsubsection{Finite dimensional convergence}
The main content of this Subsubsection is to show
\begin{proposition}With the notation of Theorem~\ref{thm:low_d}, for all $s,\,t\in\overline V$,
 \[
  \lim_{N\to\infty}\mathrm{Cov}(\Psi_N(t),\Psi_N(s))= \mathrm{Cov}(\Psi(t),\Psi(s)).
 \]
\end{proposition}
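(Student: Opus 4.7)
The plan is to reduce the computation of $\mathbf{Cov}(\Psi_N(t),\Psi_N(s))$ at arbitrary $t,s\in\overline V$ to the covariance evaluated at the nearest lattice points, and then to invoke the convergence of the rescaled discrete Green's function to $G_V$ proved in \cite{Mueller:Sch:2017}. The idea is that piecewise-linear interpolation perturbs values by at most one lattice spacing, which under the normalisation $N^{(d-4)/2}$ together with the gradient bounds produces a negligible error.

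First I would decompose
\[
\Psi_N(t) = \kappa N^{(d-4)/2}\, \varphi_{\lfloor Nt\rfloor} + R_N(t),
\]
where $R_N(t)$ is a convex combination of the increments $\kappa N^{(d-4)/2}(\varphi_y - \varphi_{\lfloor Nt\rfloor})$, with $y$ ranging over the vertices of the simplex containing $Nt$ (all within distance $\sqrt d$ of $\lfloor Nt\rfloor$). Writing each such difference as a telescoping sum of at most $d$ nearest-neighbour increments and applying Fact~\ref{observation0}, one obtains
\[
\E\bigl[R_N(t)^2\bigr] \le C \kappa^2 N^{d-4} \log N \text{ in } d=2, \qquad \E\bigl[R_N(t)^2\bigr] \le C \kappa^2 N^{d-4} \text{ in } d=3,
\]
so in both cases $\E[R_N(t)^2] = o(1)$ uniformly in $t\in\overline V$.

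Second, I would combine this decomposition with bilinearity of the covariance, Cauchy--Schwarz, and the uniform variance bound $\var(\Psi_N(t)) \le C$ (which follows from Lemma~\ref{DGF_bound0}~\ref{1.DGF_bound0} since $\kappa^2 N^{d-4} G_{N-1}(\lfloor Nt\rfloor,\lfloor Nt\rfloor) \le C$). The three cross terms are therefore $o(1)$, giving
\[
\mathbf{Cov}(\Psi_N(t),\Psi_N(s)) = \kappa^2 N^{d-4}\, G_{N-1}(\lfloor Nt\rfloor,\lfloor Ns\rfloor) + o(1).
\]
The concluding step is to invoke the main convergence result of \cite{Mueller:Sch:2017} for the rescaled Dirichlet Green's function on boxes, which states precisely that $\kappa^2 N^{d-4} G_{N-1}(\lfloor Nt\rfloor,\lfloor Ns\rfloor) \to G_V(t,s)$ as $N \to \infty$ for each $t,s\in V$; the boundary case $t$ or $s \in \partial V$ is handled by the Dirichlet condition $G_V(t,s) = 0$ together with the same variance bound.

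The main obstacle is the Green's function convergence itself, which is a genuine elliptic-PDE approximation result; this is precisely what \cite{Mueller:Sch:2017} supplies, so given that input the rest of the argument above is routine. As a by-product, positive-definiteness of the limiting kernel $G_V$ follows from being the pointwise limit of the positive-definite kernels $\kappa^2 N^{d-4} G_{N-1}$, guaranteeing that the limit Gaussian process $\Psi$ with covariance $G_V$ is well-defined.
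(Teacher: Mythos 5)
Your proposal follows essentially the same route as the paper's proof: decompose $\Psi_N(t)$ into the value $\kappa N^{(d-4)/2}\vr_{\lfloor Nt\rfloor}$ at the nearest lattice point plus an interpolation remainder, show the remainder has $o(1)$ second moment via Fact~\ref{observation0}, and then invoke \citet[Corollary 1.4]{Mueller:Sch:2017} for the rescaled Green's function $G_h(h\lfloor Nt\rfloor,h\lfloor Ns\rfloor)\to G_V(t,s)$, with the boundary case handled by the Dirichlet condition. The only presentational difference is that you make the Cauchy--Schwarz treatment of the cross terms and the uniform variance bound explicit, whereas the paper states this tersely ("Similarly one can show\ldots"); the underlying argument is identical.
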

\begin{proof}
To show the finite dimensional convergence we use Corollary 1.4 of \cite{Mueller:Sch:2017} (in their setting the domain was $(0,\,1)^d$ but the result works for $V$ as well). 
We observe that for $h:=1/N$,  one has $G_{N-1}(x,y)= 4d^2h^{d-4}G_h(hx,hy)$ where $G_h$ 
satisfies for $x\in\mathrm{int}(V_h)$ with $V_h=[-1,1]^d\cap h\Z^d$ the following boundary value problem ($\Delta_h$ is defined in Appendix~\ref{appendix:T}):
\[\begin{cases}
\Delta_h^2 G_{h}(x,y) = \frac1{h^d}\delta_{x}(y) & y\in \mathrm{int}(V_h) \\
G_{h}(x,y) = 0 & y\notin \mathrm{int}(V_h)
\end{cases}.
\]
Let $\Psi$ be the Gaussian process on $\overline V$ such that $\E[\Psi(t)\Psi(s)]= G_V(t,s)$ for all $t,\,s\in\overline V$, 
where $G_V$ is the Green's function for the biharmonic equation with homogeneous Dirichlet 
boundary conditions (it will be a by-product of this proof that such a process exists).
First we consider $d=2$. For $t\in\overline V$ we have 
\begin{align*}
\Psi_N(t)
=\Psi_{N,1}(t)+\Psi_{N,2}(t) ,
\end{align*}

where $\Psi_{N,1}(t)=\frac \kappa{N}\vr_{\floor{Nt}}$ and
\begin{align*}
\Psi_{N,2}(t)&=\frac \kappa N\sum_{i,j\in\{1,2\}, i\neq j}\one_{\left(\{Nt_i\}\ge\{Nt_j\}\right)}(t)[\{Nt_i\}\left(\vr_{\floor{Nt}+e_i}-
\vr_{\floor{Nt}}\right)\\
&+\{Nt_j\}\left(\vr_{\floor{Nt}+e_i+e_j}-\vr_{\floor{Nt}+e_i}\right)].
\end{align*}

Then using Fact~\ref{observation0} we have 
$\var(\Psi_{N,2}(t))\le C(\log N)N^{-2}$ and hence $\Psi_{N,2}(t)$ converges to zero in probability as $N$ tends to infinity.

 Again if $t\in V$ then $$\var(\Psi_{N,1}(t))= \frac {\kappa^2}{N^2}G_{N-1}(\floor{Nt},\floor{Nt})=G_h(h\floor{Nt},h\floor{Nt})$$
 and $G_h(h\floor{Nt},h\floor{Nt})$ converges to $G_V(t,t)$ by Corollary 1.4 of \cite{Mueller:Sch:2017}. Also if $t\in\partial V$ then $\var(\Psi_{N,1}(t))=0=G_V(t,t)$. Hence $\Psi_N(t)\overset{d}\to \Psi(t)$. 
 
Similarly one can show using Lemma~\ref{DGF_bound0}, Fact~\ref{observation0} and \citet[Corollary 1.4]{Mueller:Sch:2017} that 
for any $t,s\in\overline V$, $$\mathrm{Cov}(\Psi_N(t),\Psi_N(s))\to \mathrm{Cov}(\Psi(t),\Psi(s)).$$ Since these variables under consideration are Gaussian, the finite dimensional follows from the convergence of the covariance. 

In $d=3$, for $t\in\overline V$ we have 

\begin{align*}
\Psi_N(t) &=\frac \kappa{\sqrt N}\vr_{\floor{Nt}}+\frac \kappa{\sqrt N}\sum_{i,\,j,\,k\in\{1,\,2,\,3\},\,\text{pairwise different}}\one_{\left(\{Nt_i\}\ge\{Nt_j\}\ge\{Nt_k\}\right)}(t)\\
&[\{Nt_i\}\left(\vr_{\floor{Nt}+e_i}-\vr_{\floor{Nt}}\right)+\{Nt_j\}\left(\vr_{\floor{Nt}+e_i+e_j}-\vr_{\floor{Nt}+e_i}\right) \\
&+\{Nt_k\}\left(\vr_{\floor{Nt}+e_i+e_j+e_k}-\vr_{\floor{Nt}+e_i+e_j}\right)]\\
&=: \Psi_{N,1}(t)+\Psi_{N,2}(t).
\end{align*}
By means of Fact~\ref{observation0} we have $\var(\Psi_{N,2}(t))\le C/{N}$ and hence $\Psi_{N,2}(t)$ converges to zero in probability as $N\to\infty$. The rest of the 
proof is the same as $d=2$ and follows from Corollary 1.4 of \cite{Mueller:Sch:2017}.
\end{proof}

\section{Convergence of finite volume measure in \texorpdfstring{$d\ge 4$}{}}\label{sec:limit_des}

In this Section $D$ denotes a bounded domain in $\R^d$, $d\ge 4$, with smooth boundary. 
\begin{remark}[Regularity of the boundary of the domain]In what follows, the assumption of smoothness of the boundary is required to obtain asymptotics of the eigenvalues of the biharmonic operator (cf. Proposition~\ref{prop:Weyl}). 
\end{remark}

\subsection{Description of the limiting field}
\subsubsection{Spectral theory for the biharmonic operator}
Let $C_c^\infty(D)$ denote the space of 
infinitely differentiable functions $u: D\to \R$ with compact support inside 
$D$. For $\alpha= (\alpha_1, \,\ldots,\, \alpha_d)$ a multi-index define
 $$D^\alpha u= \frac{\partial^{\alpha_1}}{\partial x_1^{\alpha_1}}\cdots \frac{\partial^{\alpha_d}}{\partial x_d^{\alpha_d}} u.$$
Suppose $f, \,g\in L^1_{loc}(D)$. One says that $g$ is the $\alpha$-th weak partial derivative of 
$f$ (written $D^\alpha f= g$) if 
 $$\int_{D} f D^\alpha u \De x= (-1)^{|\alpha|} \int_{D} g u \De x \quad\forall\, u \in C_c^\infty(D).$$
The Sobolev space $W^{k,p}$ is defined in the usual way as
$$W^{k,p}= \{ f\in L^1_{loc}(D) : \,D^\alpha f\in L^p(D), \, |\alpha|\le k\}.$$
Denote by $H^k(D):= W^{k,2}(D)$, $k=0,\,1,\,\ldots$, which is a Hilbert space with norm
 $$\|f\|_{H^k(D)} = \left( \sum_{|\alpha|\le k}\int_{D} |D^\alpha f|^2\De x\right)^{1/2}.$$
It is true that if $a>b$ then $H^a(D)\subset H^b(D)$. Let us define another Hilbert space,
$$H^k_0(D):= \overline{ C_c^{\infty}(D)}^{\|\cdot\|_{H^k(D)}}$$
and let $H^{-k}(D)= [ H^k_0(D)]^*$ be its dual. In this Section 
we will use round brackets $(\cdot,\,\cdot)$ to denote the action of a dual Hilbert space on the original space, and $\la\cdot,\,\cdot\ra$ for inner products.
We consider the inner product
$$\left\langle u, v\right\rangle _{H^2_0}= \int_D \Delta u \Delta v \De x$$
 which induces a norm on $H^2_0(D)$ equivalent to the standard Sobolev norm \cite[Corollary~2.29]{GGS}. We always consider $H^2_0(D)$ with this norm. 

We review briefly the spectral theory for the biharmonic operator as it helps us 
to give an explicit construction of the continuum bilaplacian field. We have the following Theorem, which basically says that we can construct an operator $B$ being the inverse of the bilaplacian (see also Remark~\ref{rem:equiv_index}). 
 \begin{theorem}\label{thm:B_0}
There exists a bounded linear isometry
$$B_0:H^{-2}(D)\rightarrow H^2_0(D)$$ 
such that, for all $f\in H^{-2}(D)$ and for all $v\in H^2_0(D)$, $$(f,\,v)= \la v,\,B_0 f\ra_{H^2_0}.$$ Moreover, the restriction $B$ on $L^2(D)$ of the operator $i\circ B_0 :H^{-2}(D)\rightarrow L^2(D) $ 
is a compact and self-adjoint operator, where $i: H^2_0(D)\hookrightarrow L^2(D)$ is the inclusion map.
 \end{theorem}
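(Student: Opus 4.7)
The plan is to build $B_0$ as the inverse Riesz isomorphism of the Hilbert space $(H^2_0(D),\langle\cdot,\cdot\rangle_{H^2_0})$ and then read off all the required properties from standard Hilbert space theory together with Rellich--Kondrachov.

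First I would fix $f\in H^{-2}(D)$ and apply the Riesz representation theorem in the Hilbert space $H^2_0(D)$ equipped with the inner product $\langle u,v\rangle_{H^2_0}=\int_D\Delta u\Delta v\,\mathrm{d}x$ (which is indeed a genuine inner product by the equivalence of norms cited from \cite{GGS}). This yields a unique $u_f\in H^2_0(D)$ with $(f,v)=\langle v,u_f\rangle_{H^2_0}$ for every $v\in H^2_0(D)$ and $\|u_f\|_{H^2_0}=\|f\|_{H^{-2}}$. I then define $B_0 f:=u_f$. Linearity is inherited from uniqueness in Riesz's theorem, and the isometry property is the norm identity just stated; the characterising equation in the statement of the theorem is nothing but the defining Riesz relation.

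For the second part, I would first note that the natural embedding $L^2(D)\hookrightarrow H^{-2}(D)$, sending $g$ to the functional $v\mapsto\int_D gv\,\mathrm{d}x$, is bounded: $|\int gv|\le\|g\|_{L^2}\|v\|_{L^2}\le C\|g\|_{L^2}\|v\|_{H^2_0}$ by the Poincar\'e inequality valid on $H^2_0(D)$. Hence the restriction $B_0|_{L^2(D)}:L^2(D)\to H^2_0(D)$ is bounded. Composing with the inclusion $i:H^2_0(D)\hookrightarrow L^2(D)$ gives $B:L^2(D)\to L^2(D)$. Compactness of $B$ then follows from the Rellich--Kondrachov theorem: since $D$ is bounded with smooth boundary, $i$ is a compact operator, and the composition of a bounded operator with a compact one is compact.

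Finally, for self-adjointness of $B$ on $L^2(D)$, I would take $f,g\in L^2(D)$ and use the defining identity twice. Viewing $g\in L^2(D)$ as the element of $H^{-2}(D)$ given by the embedding above, one has
\begin{equation*}
\langle g,Bf\rangle_{L^2}=(g,Bf)=\langle Bf,B_0 g\rangle_{H^2_0}=\langle Bf,Bg\rangle_{H^2_0},
\end{equation*}
since $Bg$ and $B_0 g$ coincide as functions in $H^2_0(D)$. The analogous computation with $f$ and $g$ swapped, combined with the symmetry of the $H^2_0$ inner product, yields $\langle g,Bf\rangle_{L^2}=\langle f,Bg\rangle_{L^2}$, proving that $B=B^*$ on $L^2(D)$. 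The main subtlety (rather than obstacle) is simply keeping track of where each object lives: the same function represents simultaneously an element of $H^2_0(D)$, of $L^2(D)$ and (via the embedding) of $H^{-2}(D)$, and the compactness step rests crucially on the smoothness of $\partial D$ through Rellich--Kondrachov.
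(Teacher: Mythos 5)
Your proof is correct and follows essentially the same route as the paper: Riesz representation in $(H^2_0(D),\langle\cdot,\cdot\rangle_{H^2_0})$ to define $B_0$, compactness of the embedding $H^2_0(D)\hookrightarrow L^2(D)$ (Rellich) composed with the bounded $B_0$, and the chain $\langle g,Bf\rangle_{L^2}=(g,Bf)=\langle Bf,Bg\rangle_{H^2_0}$ with symmetry for self-adjointness. The only cosmetic difference is that you spell out the boundedness of $L^2(D)\hookrightarrow H^{-2}(D)$ via Poincar\'e, which the paper leaves implicit.
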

 \begin{proof}
 	Fix $f\in H^{-2}(D)$. By the Riesz representation theorem there exists a unique $u_f\in H^2_0(D)$ such that for all $v\in H^2_0(D)$ 
 	$$(f,\,v)= \la v, u_f\ra_{H^2_0}.$$ 
 	We define $B_0 f:=u_f$. Then by definition $B_0$ is a bounded linear isometry and for all $v\in H^2_0(D)$ 
 	$$(f,\,v)= \la v, B_0f\ra_{H^2_0}.$$
 	We have $H^2_0(D)\hookrightarrow H^1_0(D)\hookrightarrow L^2(D)$ and the second embedding is compact. 
 	So $i: H^2_0(D)\hookrightarrow L^2(D)$ is compact and hence the operator $i\circ B_0 :H^{-2}(D)\rightarrow L^2(D) $ is compact. This implies that the restriction $B$ is compact. $B$ is self-adjoint as for any $f, g\in L^2(D)$, 
 	\[\la Bf, g\ra_{L^2}= (g,\, Bf)=\la Bf, Bg\ra_{H^2_0}= (f,\, Bg)= \la f, Bg\ra_{L^2}.\qedhere\] 	
 \end{proof}
Consequently we can find now an orthonormal basis of elements of $H_0^2(D)$, as the next theorem shows.
 \begin{theorem}\label{thm:eigenfunctions}
 	There exist $u_1,\, u_2,\, \ldots $ in $H^2_0(D)$ and numbers $$0<\lambda_1\le\lambda_2\le \cdots \to\infty$$ such that
	\begin{itemize}
		\item $\{u_j\}_{j\in \N}$ is an orthonormal basis for $L^2(D)$,
		\item $Bu_j=\lambda^{-1}_ju_j$, where $B$ is as in Theorem~\ref{thm:B_0},
 		\item $(u_j,v)_{H^2_0}=\lambda_j\la u_j, v\ra_{L^2}$  for all $v\in H^2_0(D)$,
 		\item $\{\lambda^{-1/2}_ju_j\}$ is an orthonormal basis for $H^2_0(D)$.
 	\end{itemize}
 \end{theorem}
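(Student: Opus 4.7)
My plan is to apply the Hilbert-Schmidt spectral theorem for compact self-adjoint operators to $B:L^2(D)\to L^2(D)$, which was shown in Theorem~\ref{thm:B_0} to be compact and self-adjoint, and then to transport the spectral data to $H^2_0(D)$ by means of the intertwining provided by $B_0$.

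First I would verify that $B$ is strictly positive, and in particular injective, on $L^2(D)$. For $f\in L^2(D)$, using $(f,v)=\langle v,B_0 f\rangle_{H^2_0}$ with $v=Bf\in H^2_0(D)$ (viewing $Bf$ as $B_0f$ before applying the inclusion $i$), and noting that for $L^2$-elements the dual pairing agrees with the $L^2$-inner product, I get
\[
\langle Bf,f\rangle_{L^2}=(f,Bf)=\langle Bf,B_0 f\rangle_{H^2_0}=\|B_0 f\|_{H^2_0}^2.
\]
Since $B_0$ is an isometry this vanishes precisely when $f=0$, so $B$ is positive and injective. The spectral theorem then produces an $L^2$-orthonormal basis $\{u_j\}_{j\in\N}$ of eigenfunctions of $B$ with real eigenvalues $\mu_j\to 0$; by positivity each $\mu_j>0$, and by injectivity none is zero. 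Setting $\lambda_j:=\mu_j^{-1}$ and ordering them so that $\mu_j$ is decreasing gives $0<\lambda_1\le\lambda_2\le\cdots\to\infty$ with $Bu_j=\lambda_j^{-1}u_j$. Moreover $u_j=\lambda_j B u_j$ lies in the image of $B$, which is contained in $H^2_0(D)$, so $u_j\in H^2_0(D)$.

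Next I would derive the intertwining identity. For any $v\in H^2_0(D)$, applying the defining property of $B_0$ with $f=u_j\in L^2(D)\subset H^{-2}(D)$ and using $B_0 u_j=\lambda_j^{-1}u_j$ gives
\[
\langle u_j,v\rangle_{L^2}=(u_j,v)=\langle v,B_0 u_j\rangle_{H^2_0}=\lambda_j^{-1}\langle v,u_j\rangle_{H^2_0},
\]
i.e.\ $\langle u_j,v\rangle_{H^2_0}=\lambda_j\langle u_j,v\rangle_{L^2}$. Finally, for the orthonormal basis property of $\{\lambda_j^{-1/2}u_j\}$ in $H^2_0(D)$, orthonormality is immediate from the intertwining:
\[
\langle \lambda_i^{-1/2}u_i,\lambda_j^{-1/2}u_j\rangle_{H^2_0}=(\lambda_i\lambda_j)^{-1/2}\lambda_i\langle u_i,u_j\rangle_{L^2}=\delta_{ij}.
\]
Completeness follows by a duality argument: if $v\in H^2_0(D)$ is $H^2_0$-orthogonal to every $u_j$, then the intertwining identity gives $\langle v,u_j\rangle_{L^2}=\lambda_j^{-1}\langle v,u_j\rangle_{H^2_0}=0$ for all $j$, and since $\{u_j\}$ is a complete $L^2$-orthonormal system we conclude $v=0$.

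There is no real obstacle here beyond careful bookkeeping: the main point is to keep straight the distinction between $B_0:H^{-2}(D)\to H^2_0(D)$ and its ``restriction'' $B:L^2(D)\to L^2(D)$ obtained by pre-composing with $L^2\hookrightarrow H^{-2}$ and post-composing with $i:H^2_0\hookrightarrow L^2$, and to use the first (the isometry) for positivity/injectivity and the second (the compact self-adjoint operator) for the spectral decomposition. Once this is done, everything reduces to the standard spectral theorem plus the intertwining identity.
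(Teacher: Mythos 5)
Your proof is correct and follows essentially the same route as the paper: apply the spectral theorem to the compact self-adjoint operator $B$, pull the eigenfunctions into $H^2_0(D)$ via the range of $B$, derive the intertwining identity $\langle u_j,v\rangle_{H^2_0}=\lambda_j\langle u_j,v\rangle_{L^2}$ from the defining property of $B_0$, and conclude orthonormality and completeness in $H^2_0(D)$ by duality. The one small stylistic difference is that you establish strict positivity of $B$ up front via $\langle Bf,f\rangle_{L^2}=\|B_0f\|^2_{H^2_0}$, which yields both injectivity and positivity of the eigenvalues at once, whereas the paper proves injectivity directly ($Bf=0\Rightarrow f=0$ by density) and then separately deduces $\tilde\lambda_j>0$ from $\tilde\lambda_j\|u_j\|^2_{H^2_0}=1$; this is a minor streamlining, not a genuinely different argument.
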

 \begin{proof}
 	 By the spectral theorem for compact self-adjoint operators we get an orthonormal basis of $L^2(D)$ 
 	 consisting of eigenvectors of $B$ 
 	 with $Bu_j= \widetilde \lambda_j u_j$ and eigenvalues $\widetilde \lambda_j \to 0$. Note that for any $f\in L^2(D)$, $Bf=0$ implies that
 	 $$\la v, \,f\ra_{L^2}= \la v, \,Bf\ra_{H^2_0}=0 \quad \forall v\in H^2_0(D)$$
 	 and hence $\la g, f\ra_{L^2}=0$ for all $g\in L^2(D)$ (since $H^2_0(D)$ is dense in $L^2(D)$) and so $f\equiv 0$. Thus $0$ is not an eigenvalue of $B$ and we have for any $j\in \mathbb{N}$
 	 $$u_j = \frac1{\widetilde \lambda_j} B u_j = B\frac{ u_j}{\widetilde \lambda_j} \in \mathrm{Range}(B)\subset H^2_0(D).$$
	 Hence $u_j\in H^2_0(D)$. Now observe that, for any $j\in \mathbb{N}$,  
 	 $\widetilde \lambda_j \la u_j,\, v\ra_{H^2_0}= \la Bu_j, \,v\ra_{H^2_0} = \la u_j,\, v\ra_{L^2}$ for all $v\in H^2_0(D)$. So this gives
 	 $$\widetilde \lambda_j \la u_j,\, u_j\ra_{H^2_0}= \|u_j\|_{L^2}=1.$$
 	 But $\la u_j, u_j\ra_{H^2_0}>0$ and hence $\tilde \lambda_j>0$ for all $j\in \mathbb{N}$. We define $\lambda_j:=1/{\widetilde \lambda_j}$. So we can conclude $$0<\lambda_1\le\lambda_2\le \ldots \to\infty.$$
 	 Moreover $Bu_j=\lambda^{-1}_j u_j$ and
 	 \begin{equation}\label{eq:eigen_u}
 	 \la u_j, \,v\ra_{H^2_0}= \lambda_j \la u_j, v\ra_{L^2} \quad \forall \, v\in H^2_0(D).
 	 \end{equation}
 	 We now show that $\{\lambda^{-1/2}_j u_j\}_{j\in\N}$ is an orthonormal basis for $H^2_0(D)$. Indeed we have 
 	 \begin{align*}
	 \la\lambda^{-\frac12}_ju_j, \lambda^{-\frac12}_ku_k \ra_{H^2_0} &=\lambda^{-\frac12}_j\lambda^{-\frac12}_k\la u_j, u_k \ra_{H^2_0}\\
 	 &=\lambda^{\frac12}_j\lambda^{-\frac12}_k\la u_j, u_k \ra_{L^2}=\delta_{jk}.
 	 \end{align*}
 	 So $\{\lambda^{-1/2}_ju_j\}$ is an orthonormal system. But for any $v\in H^2_0(D)$, $\la u_j, v\ra_{H^2_0}=0 $ for all $j$ implies that $\la u_j, v\ra_{L_2}= 0$ for all $j$ which in turn implies $v=0$. This completes the proof.            
  \end{proof}
  \begin{corollary}\label{cor:eigen_plus_smooth}
  For each $j\in \N$ one has $u_j\in C^\infty(D).$ Moreover $u_j$ is an eigenfunction of $\Delta^2$ with eigenvalue $\lambda_j$.
  \end{corollary}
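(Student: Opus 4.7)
The plan is to first extract the distributional PDE satisfied by $u_j$ from the weak formulation in Theorem~\ref{thm:eigenfunctions}, and then bootstrap regularity via standard interior elliptic estimates for the bilaplacian.

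First I would show that $\Delta^2 u_j = \lambda_j u_j$ in $\mathcal{D}'(D)$. By Theorem~\ref{thm:eigenfunctions} we have
\[
\la u_j, v\ra_{H^2_0} = \lambda_j \la u_j, v\ra_{L^2} \qquad \forall v\in H^2_0(D),
\]
and by the definition of $\la\cdot,\cdot\ra_{H^2_0}$ the left-hand side equals $\int_D \Delta u_j\,\Delta v\,\De x$. Restricting to test functions $v\in C_c^\infty(D)\subset H^2_0(D)$ and integrating by parts twice (the boundary terms vanish because $v$ has compact support in $D$), we obtain
\[
\int_D u_j \, \Delta^2 v\,\De x = \lambda_j\int_D u_j v\,\De x \qquad \forall v\in C_c^\infty(D),
\]
which is precisely the statement that $\Delta^2 u_j = \lambda_j u_j$ as distributions on $D$.

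Next, I would invoke interior elliptic regularity for the biharmonic operator. Since $\Delta^2$ is a uniformly elliptic operator of order $4$ with constant (hence smooth) coefficients, the classical interior regularity theorem asserts that if $\Delta^2 u = f$ in $\mathcal{D}'(D)$ with $u\in H^m_{\mathrm{loc}}(D)$ and $f\in H^k_{\mathrm{loc}}(D)$, then $u\in H^{k+4}_{\mathrm{loc}}(D)$. Starting from $u_j\in H^2_0(D)\subset L^2(D)=H^0_{\mathrm{loc}}(D)$ and $f=\lambda_j u_j\in L^2(D)$, a first application yields $u_j\in H^4_{\mathrm{loc}}(D)$; then $\lambda_j u_j\in H^4_{\mathrm{loc}}(D)$ gives $u_j\in H^8_{\mathrm{loc}}(D)$, and iterating this bootstrap shows $u_j\in H^k_{\mathrm{loc}}(D)$ for every $k\in\N$. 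The Sobolev embedding $H^k_{\mathrm{loc}}(D)\hookrightarrow C^{k-\lceil d/2\rceil-1}_{\mathrm{loc}}(D)$ then implies $u_j\in C^\infty(D)$.

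Finally, since $u_j\in C^\infty(D)$ the identity $\Delta^2 u_j = \lambda_j u_j$ holds classically (pointwise in $D$), which is the second assertion of the corollary. The only delicate step is the bootstrap, but this is standard interior elliptic regularity for the constant-coefficient operator $\Delta^2$; no boundary regularity is claimed or needed here, so the smoothness of $\partial D$ assumed in Section~\ref{sec:limit_des} does not enter into this particular argument.
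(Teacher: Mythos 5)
Your proof is correct and follows essentially the same route as the paper: derive the weak (distributional) equation $\Delta^2 u_j = \lambda_j u_j$ from the variational identity in Theorem~\ref{thm:eigenfunctions}, then invoke interior elliptic regularity for the constant-coefficient operator $\Delta^2$ to conclude smoothness. The paper simply cites \citet[Theorem~9.26]{FollandReal} for the regularity step where you spell out the bootstrap; your explicit observation that only \emph{interior} regularity is needed, so boundary smoothness of $\partial D$ plays no role here, is a fair and accurate remark.
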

  \begin{proof}
  We have for all $v\in H_0^2(D)$:
  \[   \la \Delta^2 u_j,\,v \ra_{L^2}\stackrel{GI}{=}\la u_j,\,v \ra_{H_0^2}\stackrel{\text{Theorem~\ref{thm:eigenfunctions}}}{=}\lambda_j\la u_j,\,v \ra_{L^2}\]
  where ``GI'' stands for Green's first identity
  \[
  \int_D u\Delta v \De V=-\int_D\nabla u\cdot\nabla v \De V+\int_{\partial D}u\nabla v\cdot\mathbf n \De S.
  \]
  Thus $u_j$ is an eigenfunction of $\Delta^2$ with eigenvalue $\lambda_j$ in the weak sense. The smoothness of 
  $u_j$ follows from the fact that $\Delta^2$ is an elliptic operator with smooth coefficients 
  and the elliptic regularity theorem \cite[Theorem~9.26]{FollandReal}. Hence $u_j$ is an 
  eigenfunction of $\Delta^2$ with eigenvalue $\lambda_j$.
 \end{proof}

\begin{remark}\label{rem:series} As a consequence of the above, one easily has that
$$\|f\|_{H^2_0}^2 = \sum_{j\ge 1} \lambda_j \la f, u_j\ra^2_{L^2}$$
for any $f\in H^2_0(D)$.
\end{remark}

We conclude this subsection with some bounds for the derivatives of the eigenfunctions $u_j$ of Theorem~\ref{thm:eigenfunctions}.
\begin{lemma}\label{lem:bounds_GGS}
	The following bounds hold:
	\begin{align}
	\sup_{x\in \overline{D}}\lvert u_j(x)\rvert\le C\lambda_j^{l_0} \label{bound0}, \\
	\sum_{\lvert\alpha\rvert\le2}\sup_{x\in \overline{D}}\lvert D^\alpha u_j(x)\rvert\le C\lambda_j^{l_2} \label{bound1}, \\
	\sum_{\lvert\alpha\rvert\le5}\sup_{x\in \overline{D}}\lvert D^\alpha u_j(x)\rvert\le C\lambda_j^{l_5} \label{bound2}
	\end{align}
	where 
	$$l_m:=\ceil*{ \frac{1}{4}\left(\floor*{\frac d2}+m+1\right)},\quad m=0, 2, 5.$$
\end{lemma}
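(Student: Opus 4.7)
The plan is to combine iterative elliptic regularity for the biharmonic operator with the Sobolev embedding theorem. The assumption that $\partial D$ is smooth ensures that global $H^k$-regularity up to the boundary is available, which is what makes the argument go through all the way to $\sup$-bounds.

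First I would invoke the standard elliptic regularity estimate for the biharmonic problem with Dirichlet boundary data (see e.g.\ \cite{GGS}): for every integer $k\ge 0$, if $u\in H^2_0(D)$ satisfies $\Delta^2 u=f\in H^k(D)$ in the weak sense, then $u\in H^{k+4}(D)$ with
\[
\|u\|_{H^{k+4}(D)}\le C\bigl(\|f\|_{H^k(D)}+\|u\|_{L^2(D)}\bigr).
\]
Applied to $u_j$, which by Corollary~\ref{cor:eigen_plus_smooth} satisfies $\Delta^2 u_j=\lambda_j u_j$ with $\|u_j\|_{L^2}=1$, this gives $\|u_j\|_{H^4}\le C\lambda_j$ (using $\lambda_j\ge\lambda_1>0$ to absorb the lower-order term). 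Iterating the estimate $l$ times yields
\[
\|u_j\|_{H^{4l}(D)}\le C\,\lambda_j^{\,l},\qquad l\in\N,
\]
by an easy induction, since each application multiplies the bound by a factor of order $\lambda_j$.

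Next I would apply the Sobolev embedding $H^{k}(D)\hookrightarrow C^{m}(\overline D)$, which holds whenever $k>m+d/2$, i.e.\ $k\ge m+\lfloor d/2\rfloor+1$. To obtain a bound on $\sum_{|\alpha|\le m}\sup_{\overline D}|D^\alpha u_j|$ with $m\in\{0,2,5\}$, I choose the smallest multiple of $4$ that dominates $m+\lfloor d/2\rfloor+1$, namely $4l_m$ with
\[
l_m=\ceil*{\tfrac14\bigl(\floor*{d/2}+m+1\bigr)}.
\]
Then
\[
\sum_{|\alpha|\le m}\sup_{x\in\overline D}|D^\alpha u_j(x)|\le C\|u_j\|_{H^{4l_m}(D)}\le C\,\lambda_j^{\,l_m},
\]
which is exactly the asserted inequality for each of the three cases $m=0,2,5$.

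The only genuine input is the global elliptic regularity up to the boundary; the interior version is routine, but the extension up to $\partial D$ is where the smoothness of the boundary is essential, and this is the step I would cite explicitly from \cite{GGS} rather than reprove. Everything else -- the inductive estimate on $\|u_j\|_{H^{4l}}$, and the conversion of Sobolev norms into $C^m$-norms -- is a standard bootstrap argument that I would carry out in one short paragraph.
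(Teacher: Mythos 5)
Your proof is correct and follows essentially the same route as the paper: iterate the $H^2_0$ a priori estimate for the biharmonic Dirichlet problem (the paper cites \citet[Corollary~2.21]{GGS}) to get $\|u_j\|_{H^{4l}}\le C\lambda_j^l$, then apply the Sobolev embedding $H^{4l_m}(D)\hookrightarrow C^m(\overline D)$ (the paper cites \citet[Chapter~5, Theorem~6 (ii)]{Evans}). The only cosmetic difference is that you carry the lower-order term $\|u\|_{L^2}$ in the regularity estimate and absorb it via $\lambda_j\ge\lambda_1>0$, whereas the paper uses a form of the estimate that already gives $\|u_j\|_{H^{4l}}\le C\lambda_j\|u_j\|_{H^{4l-4}}$ directly.
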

\begin{proof}
	Taking $l_0=\lceil {1}/{4}(\floor{d/2}+1)\rceil$ we obtain from \citet[Chapter~5, Theorem~6 (ii)]{Evans} that $\sup_{x\in \overline{D}}\lvert u_j(x)\rvert\le C \| u_j \|_{H^{4l_0}(D)}$. Now a repeated application of \citet[Corollary~2.21]{GGS} gives
	\begin{align*}
	\sup_{x\in \overline{D}}\lvert u_j(x)\rvert\le C \| u_j \|_{H^{4l_0}(D)} \le C \lambda_j \| u_j \|_{H^{4l_0-4}(D)} \le \cdots \le C \lambda_j^{l_0}.
	\end{align*}
	The other two bounds are obtained similarly. We make a passing remark that the smoothness of the boundary is needed in the results quoted above. 
\end{proof}
 \subsubsection{Definition of the limiting field via Wiener series}\label{subsub:const}
 For any $v\in C_c^\infty(D)$ and for any $s>0$ we define $$\|v\|_s^2:=\sum_{j\in \N}\lambda_j^{s/2}\la v,u_j\ra_{L^2}^2.$$ We define $\mathcal H_0^s(D)$ to be the Hilbert space completion of $C_c^\infty(D)$ 
 with respect to the norm $\|\cdot\|_s$.
 Then $\left(\mathcal H_0^s(D) \,,\,\|\cdot\|_s\right) $ is a Hilbert space for all $s>0$.
 \begin{remark}\label{rem:dual_spaces}\leavevmode
 	\begin{itemize}
 		\item Note that for $s=2$ we have $\mathcal H_0^2(D)= H_0^2(D)$ by Remark~\ref{rem:series}.
 		\item $i:\mathcal H_0^s(D)\hookrightarrow L^2(D)$ is a continuous embedding.
 	\end{itemize}
 \end{remark}
 \paragraph{Dual spaces.}
 For $s>0$ we define $\mathcal H^{-s}(D)= (\mathcal H_0^s(D))^*$, the dual space of $\mathcal H_0^s(D)$. Then we have $$\mathcal H_0^s(D) \subseteq L^2(D) \subseteq  \mathcal H^{-s}(D).$$
 One can show using the Riesz representation theorem that for $s>0$ the norm of $\mathcal H^{-s}(D)$ is given by
 \[
  \|v\|_{-s}^2:=\sum_{j\in \N}\lambda_j^{-s/2}(v,\,u_j)^2,\qquad v\in \mathcal H^{-s}(D).
 \]
Recall that $\left(\cdot,\,\cdot\right)$ denotes the action of the dual space $\mathcal H^{-s}(D)$ on $\mathcal H_0^s(D)$. Moreover, for $v\in L^2(D)$ we have
\[
  \|v\|_{-s}^2:=\sum_{j\in \N}\lambda_j^{-s/2}\la v,\,u_j\ra_{L^2}^2. 
 \]

 	Before we show the definition of the continuum membrane model, we need an analog of Weyl's law for the eigenvalues of the biharmonic operator.
 \begin{proposition}[{\citet[Theorem~5.1]{Beals:1967}, \citet{Pleijel:1950}}]\label{prop:Weyl}
 		There exists an explicit constant $c$ such that, as $j\uparrow+\infty$,
 		\[
 		\lambda_j\sim c^{-d/4}j^{4/d}.
 		\]
 \end{proposition}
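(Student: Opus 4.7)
The plan is to pass from eigenvalue asymptotics to counting-function asymptotics and back. Introduce the counting function $N(\lambda) := \#\{j : \lambda_j \leq \lambda\}$; since $j \mapsto \lambda_j$ is non-decreasing and tends to infinity, the statement $\lambda_j \sim c^{-d/4} j^{4/d}$ is, upon inversion, equivalent to a Weyl-type asymptotic of the form $N(\lambda) \sim c\, |D|\, \lambda^{d/4}$ for an explicit constant depending only on $d$. So the goal becomes to prove this counting-function asymptotic for the biharmonic operator with clamped ($H^2_0$) boundary conditions.

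The most conceptual route is via the heat trace applied to the compact, self-adjoint, positive operator $B$ constructed in Theorem~\ref{thm:B_0} (whose eigenvalues are $\lambda_j^{-1}$). The semigroup $e^{-t B^{-1}}$ is trace class on $L^2(D)$ and
$$
\mathrm{Tr}(e^{-tB^{-1}}) \;=\; \sum_{j \geq 1} e^{-t \lambda_j}.
$$
One wants to prove the short-time asymptotic $\mathrm{Tr}(e^{-tB^{-1}}) \sim c_d\, |D|\, t^{-d/4}$ as $t \downarrow 0^+$ and then apply Karamata's Tauberian theorem to convert this into the Weyl asymptotic for $N(\lambda)$. The value of $c_d$ is dictated by the whole-space computation: on $\R^d$ the symbol of $\Delta^2$ is $|\xi|^4$, so the heat kernel of $\partial_t + \Delta^2$ satisfies $p(t,x,x) = (2\pi)^{-d} \int_{\R^d} e^{-t|\xi|^4} \De \xi = c_d\, t^{-d/4}$ with $c_d$ explicit. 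The passage from $\R^d$ to the bounded domain $D$ would be carried out via a standard parametrix construction combined with a partition of unity, localizing to small patches where $D$ looks essentially Euclidean up to a smooth chart.

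The main technical obstacle is the boundary behaviour: the eigenvalues of $\Delta^2$ on $D$ with clamped conditions ($u = \partial_n u = 0$) are \emph{not} simply the squares of the Dirichlet Laplacian eigenvalues, so one cannot merely invoke Weyl's law for $-\Delta$ and square. The substance of the Beals/Pleijel results cited is precisely that, for a positive elliptic operator of order $2m$ on a smoothly bounded domain, the leading-order Weyl asymptotic depends only on the principal symbol and $|D|$, while boundary effects contribute only to lower-order corrections. A more elementary alternative, avoiding pseudodifferential calculus, is Dirichlet--Neumann bracketing adapted to fourth order: approximate $D$ from inside and outside by disjoint unions of small cubes of side $\ell$, use the monotonicity of the Rayleigh quotient $\int |\Delta u|^2 / \int u^2$ together with the min--max principle to sandwich $N_D(\lambda)$ between counting functions on these cube unions, and on each cube explicitly diagonalize $\Delta^2$ (with Navier conditions $u = \Delta u = 0$, where the eigenfunctions are products of sines and the eigenvalues are squares of Laplacian eigenvalues). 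Letting $\ell \downarrow 0$ and using that $\partial D$ has zero Lebesgue measure recovers the Weyl constant $c_d = \omega_d/(2\pi)^d$ and hence the claim, with the smoothness assumption on $\partial D$ ensuring that the boundary layer $D_+^\ell \setminus D_-^\ell$ has volume $O(\ell)$ and so contributes a lower-order term.
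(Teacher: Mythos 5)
The paper does not prove this proposition: it is cited to Beals and Pleijel and used as a black box. Your first route (heat trace for $B^{-1}$ on $L^2(D)$, short-time asymptotic $\mathrm{Tr}(e^{-t\Delta^2})\sim c_d|D|t^{-d/4}$ via a parametrix, then Karamata's Tauberian theorem) is in fact the argument underlying both references, so as a sketch of the cited result it is in line with the literature. Note also that for the purposes of this paper (checking convergence of series like $\sum_j\lambda_j^{-\alpha}$) a two-sided bound $C_1 j^{4/d}\le\lambda_j\le C_2 j^{4/d}$ would already suffice; the sharp constant is not actually used downstream.

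Your ``more elementary alternative'' via Dirichlet--Neumann bracketing with Navier conditions, however, contains a genuine gap, and it is precisely the kind of difficulty that makes the fourth-order Weyl law nontrivial. For the lower bound on $N_D(\lambda)$ you need trial subspaces of $H^2_0(D)$, hence functions on interior cubes that, extended by zero, lie in $H^2_0(D)$ --- that is, functions vanishing together with their \emph{normal derivative} on $\partial Q$. The Navier eigenfunctions $\prod_i\sin(k_i\pi x_i/\ell)$ satisfy $u=0$ and $\Delta u=0$ on $\partial Q$ but have nonvanishing normal derivative, so their zero-extensions are not in $H^2(D)$ at all. Thus they cannot be used as trial functions, and the ``clamped'' eigenvalues on a cube (the ones you would need) are not explicitly diagonalizable. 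For the upper bound the situation is worse: the natural ``Neumann''-type analogue for $q(u)=\int_Q|\Delta u|^2$ with form domain $H^2(Q)$ is degenerate, since every $H^2$ harmonic function is in its kernel, which is infinite-dimensional; so the free form does not yield a discrete operator and the bracketing inequality collapses. One can repair the lower-bound side by a cutoff/scaling argument (shrink the cube slightly and smoothly kill the first derivatives near $\partial Q$, controlling the change in the Rayleigh quotient by a boundary-layer term), but this is substantially more work than the sketch suggests, and the upper-bound side requires a genuinely different idea (comparison with the Dirichlet Laplacian, or reverting to the parametrix/Tauberian route). So the alternative, as stated, does not go through; the heat-trace argument is the correct one to invoke here.
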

 	The result we will prove now shows the well-posedness of the series expansion for $\psi_D$.
 	\begin{proposition}\label{prop:series_rep_h}
 		Let $(\xi_j)_{j\in \N}$ be a collection of i.i.d. standard Gaussian random variables. Set
 		\[
 		\psi_D:=\sum_{j\in \N}\lambda_j^{-1/2}\xi_j u_j.
 		\]
 		Then $\psi_D\in \mathcal H^{-s}(D)$ a.s. for all $s>({d-4})/2$.
 	\end{proposition}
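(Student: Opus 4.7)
The plan is to identify the Hilbert-space structure of $\mathcal{H}^{-s}(D)$ via the biharmonic eigenbasis $\{u_j\}_{j\in\N}$, and then reduce the claim to the convergence of a deterministic numerical series controlled by Weyl's law.

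First I would observe that, by the dual-norm formula $\|v\|_{-s}^2=\sum_j \lambda_j^{-s/2}(v,u_j)^2$ and the fact that $(u_j,u_k)=\langle u_j,u_k\rangle_{L^2}=\delta_{jk}$ whenever $u_k$ is viewed as an element of $L^2(D)\subseteq\mathcal{H}^{-s}(D)$, one obtains $\|u_j\|_{-s}^2=\lambda_j^{-s/2}$ and $\langle u_j,u_k\rangle_{-s}=\lambda_j^{-s/2}\delta_{jk}$. Hence the $u_j$'s are mutually orthogonal in $\mathcal{H}^{-s}(D)$. For the partial sums
\[
\psi_D^{(N)} := \sum_{j=1}^N \lambda_j^{-1/2}\xi_j u_j \,\in\, L^2(D)\subseteq\mathcal{H}^{-s}(D),
\]
this orthogonality yields, for $N>M$,
\[
\|\psi_D^{(N)}-\psi_D^{(M)}\|_{-s}^2 \;=\; \sum_{j=M+1}^N \lambda_j^{-1}\xi_j^2\,\|u_j\|_{-s}^2 \;=\; \sum_{j=M+1}^N \lambda_j^{-s/2-1}\xi_j^2.
\]

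The second step is to show $\sum_{j\in\N}\lambda_j^{-s/2-1}\xi_j^2<\infty$ almost surely. Since the summands are non-negative, Tonelli's theorem gives
\[
\E\Bigl[\sum_{j\in\N}\lambda_j^{-s/2-1}\xi_j^2\Bigr] \;=\; \sum_{j\in\N}\lambda_j^{-s/2-1};
\]
by Weyl's law (Proposition~\ref{prop:Weyl}) the $j$-th term is of order $j^{-(2s+4)/d}$, and this series converges precisely when $s>(d-4)/2$. Consequently, for any fixed such $s$, the random series is a.s. finite, the sequence $(\psi_D^{(N)})$ is Cauchy in the Hilbert space $\mathcal{H}^{-s}(D)$, and its limit is, by construction, $\psi_D$.

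Finally, to upgrade the statement from ``for each fixed $s>(d-4)/2$'' to ``for all $s>(d-4)/2$ simultaneously'' on a single full-measure event, I would pick a countable sequence $s_n\downarrow(d-4)/2$ and use the continuous embedding $\mathcal{H}^{-s_n}(D)\subseteq\mathcal{H}^{-s}(D)$ for every $s\ge s_n$, which is dual to the inclusion $\mathcal{H}_0^{s}(D)\subseteq\mathcal{H}_0^{s_n}(D)$; intersecting the countably many full-measure events from the previous step gives the conclusion on a single set of full probability. I do not foresee a serious obstacle: the whole argument reduces to an orthogonality computation plus Weyl asymptotics, the only mild care being the verification that $u_j$, regarded through the embedding $L^2(D)\hookrightarrow\mathcal{H}^{-s}(D)$, has norm $\lambda_j^{-s/4}$, which is immediate from the displayed dual-norm formula.
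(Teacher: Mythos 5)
Your proof is correct and takes essentially the same route as the paper: both reduce the claim to the a.s.\ finiteness of the random series $\sum_{j}\lambda_j^{-s/2-1}\xi_j^2$ and then invoke Weyl's law (Proposition~\ref{prop:Weyl}) to see that this requires exactly $s>(d-4)/2$. The only differences are that the paper applies Kolmogorov's two-series theorem at the final step, whereas you use Tonelli/monotone convergence (which, since the summands are non-negative, is slightly more direct), and that you spell out explicitly the Cauchy argument in $\mathcal H^{-s}(D)$ and the countable-intersection step for the ``for all $s$'' uniformity, both of which are sound and implicit in the paper's treatment.
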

 	\begin{proof}
 		Fix $s>({d-4})/2$. Clearly $u_j\in L^2(D)\subseteq \mathcal H^{-s}(D)$. We need to show that $\|\psi_D\|_{-s}<+\infty$ almost surely. Now this boils down to showing the finiteness of the random series
 		$$\|\psi_D\|_{-s}^2=\sum_{j\ge 1} \lambda_j^{-s/2} \left(\sum_{k\ge 1} \lambda_k^{-1/2} u_k \xi_k\, , u_j\right)^2=\sum_{j\ge 1} \lambda_j^{-\frac{s}{2}-1}\xi_j^2 $$
 		where the last equality is true since $(u_j)_{j\ge 1}$ form an orthonormal basis of $L^2(D)$. Observe that the assumptions of Kolmogorov's two-series theorem are satisfied: indeed using Proposition~\ref{prop:Weyl} one has 
 		\[\sum_{j\ge 1}\E\left(\lambda_j^{-\frac{s}{2}-1}\xi_j^2\right)\asymp c\sum_{j\ge 1} j^{-\frac{4}{d}\left(\frac{s}{2}+1\right)}<+\infty\]
 		for $s>(d-4)/2$ and 
 		\[\sum_{j\ge 1}\var\left(\lambda_j^{-\frac{s}{2}-1}\xi_j^2\right)\asymp c\sum_{j\ge 1} j^{-\frac{4}{d}(s+2)}<+\infty\] 
 		for $s>(d-8)/4$. The result then follows.
 	\end{proof}

\subsection{Definition of the limiting field via abstract Wiener spaces}\label{par:AWS}
We want now to connect the series representation given in Proposition~\ref{prop:series_rep_h} with an equivalent characterisation of $\psi_D$. This alternative definition can be given through the theory of 
abstract Wiener space (AWS). For a comprehensive overview of the theory we refer the readers to \cite{Stroock:2010} 
for example. For our purposes it will suffice to recall that an abstract Wiener space is a triple $\left(\Theta,\,H,\,\mathcal{W}\right)$, where
\begin{itemize}
\item $\Theta$ is a separable Banach space,
\item $H$ is a Hilbert space which is continuously embedded as a dense subspace of $\Theta$, equipped with the scalar product $\la\cdot,\,\cdot\ra_H$,
\item $\mathcal W$ is a  Gaussian probability measure on $\Theta$ defined as follows.
\end{itemize}
Let $\Theta^*$ be the dual space of $\Theta$. Given any $x^*\in \Theta^*$ there exists a unique $h_{x^*}\in H$ such that for all $h\in H$,
$ (h,x^*)=\la h,h_{x^*}\ra_H$ where $(\cdot,\,x^*)$ denotes the action of $x^*$ on $\Theta$. The $\sigma$-algebra $\mathcal B(\Theta)$  on $\Theta$ is
such that all the maps $\theta\mapsto (\theta, \,x^* )$ are measurable.  $\mathcal W$ is a probability measure such that, for all $x^*\in \Theta^*$,

\begin{equation}\label{eq:aws}
\mathsf E_{\mathcal W}\left[\exp\left(\iota(\cdot, x^*)\right)\right]=\exp\left(-\f{\|{h_{x^*}}\|^2_H}{2}\right)  .
\end{equation}
In other words, the variable $(\cdot, x^*)$ under $\mathcal W$ is a centered Gaussian with variance $\|{h_{x^*}}\|^2_H$.
Next, 
we introduce the \emph{Paley--Wiener
map} $\mathcal{I}$. $\mathcal I$ is viewed as a mapping
\begin{eqnarray*}
\mathcal I:\,h_{x^*}\in H&\mapsto& \mathcal I(h_{x^*})\in L^2(\mathcal W)\\
 && \theta\in \Theta \mapsto [\mathcal I(h_{x^*})](\theta):=(\theta,\,x^*).
\end{eqnarray*}
Since $\{h_{x^*}:\,x^*\in\Theta^*\}$ is dense in $H$, the map $h_{x^*}\mapsto I(h_{x^*})$ can be uniquely extended as a linear isometry from $H$ to $L^2(\mathcal W)$.
\citet[Theorem~8.2.6]{Stroock:2010} yields that the family of Paley--Wiener integrals $\left\{ \mathcal{I}\left(h\right):\,h\in H\right\} $ is Gaussian, where each $\mathcal I(h)$ has 
mean zero and variance $\|h\|_H^2$. Given~\eqref{eq:aws} the family $\{\mathcal I(u_j):\,\{u_j\}_{j\in\N}\text{ orthonormal basis of } H\}$ 
is formed by i.i.d. standard Gaussians. 

In our setting, by combining 
\citet[\S8.3.2]{Stroock:2010} and the Wiener series given in Proposition~\ref{prop:series_rep_h}, we can take $H:=H_0^2(D)$ and
 $\mathcal W$ to be the law of $\psi_D$ on $\Theta:=\mathcal H^{-s}(D)$, for an arbitrary $s>(d-4)/2$. 
 (the choice of $\Theta$ is not unique as explained in \citet[Corollary 8.3.2]{Stroock:2010}). Also by theorem \ref{thm:B_0} we can index the Paley--Wiener integrals $\mathcal I(u)$ over $u\in \mathcal H_0^2(D)$ or take the maps $\mathcal I(B_0(f))$ over $f\in \mathcal H^{-2}(D)$.
\begin{remark}\label{rem:equiv_index}
By means of integration by parts we obtain, 
for every $f\in C_c^\infty(D)$, that the solution $u_f$ of the boundary value problem 
\begin{equation}\label{eq:bvprobl_class}
\begin{cases}
\Delta^2 u(x)=f(x),&x\in D\\
D^\beta u(x)=0,&|\beta|\le 1,\,x\in\partial D.
 \end{cases}
 \end{equation}
is such that for all $v\in C_c^\infty(D)$
\[\int_{D} v(x) f(x)\De x = \int_{D} v(x)\Delta^2u_f(x)\De x= \la v,\, u_f\ra_{H^2_0}.\]
Using the denseness of $C_c^\infty(D)$ in $H^2_0(D)$ we conclude from Theorem \ref{thm:B_0} that $B_0f=u_f$. Thus we have 
\[\|f\|^2_{-2} = \int_{D} u_f(x) f(x)\De x = \|u_f\|^2_{H^2_0} .\]

\end{remark}
 

\subsection{Discretisation set-up}\label{sec:set_up}
We will use the parameter $h:=1/N$ for $N\in \mathbb N$. Let $D_h:= \overline D \cap h\mathbb{Z}^d$. 
Let us denote by $R_h$ the set of points $\xi$ in $D_h$ such that for every $i,\, j\in\{1,\,\ldots\,d\}$, 
the points $\xi\pm h(e_i\pm e_j),\,\xi\pm he_i$ are all in $D_h$. 
Let $\Lambda_N= \frac1h R_h \subset \mathbb{Z}^d$ be the ``blow-up'' of $R_h$. In other words, $\Lambda_N\subset N\overline D\cap \Z^d$ is the largest set satisfying $\partial_2\Lambda_N\subset N\overline D\cap \Z^d$ where $\partial_2\Lambda_N:=\{y\in\Z^d\setminus\Lambda_N:\mathrm{dist}(y,\,\Lambda_N)\le 2\}$ is the double (outer) boundary of $\Lambda_N$ of points at $\ell^1$ distance at most $2$ from it. 
Let $(\vr_z)_{z\in \Lambda_N}$ be the membrane model 
on $\Lambda_N$ whose covariance is denoted by $G_{\Lambda_N}$. It satisfies the following boundary value problem: for all $x\in \Lambda_N$,
\begin{align}\label{eq:cov_Lambda}
\left\{\begin{array}{lr}
\Delta_1^2 G_{\Lambda_N}(x,y) = \delta_x(y), & y\in \Lambda_N\\
G_{\Lambda_N}(x,y) = 0,   &  y\notin \Lambda_N
\end{array}\right..
\end{align}
Define $\psi_h$ by 
\begin{equation}
(\psi_{h},\,f):=\kappa\sum_{x\in R_h } h^{\frac{d+4}2}\vr_{x/h} f(x) \,,\,\,\, f\in\mathcal H^s_0(D).
\end{equation}
We first show that $\psi_h \in \mathcal H^{-s}(D)$ for all $s> d/2+\floor{d/2} +1$. Clearly $\psi_h$ is a linear functional on $\mathcal H^s_0(D)$. To show $\psi_h$ is bounded, with the aid of Lemma~\ref{lem:bounds_GGS} we observe that 
\begin{align*}
\sum_{j\ge 1} \lambda_j^{-\frac s2} (\psi_h, u_j)^2&=\kappa^2h^{d+4}\sum_{j\ge 1} \lambda_j^{-\frac s2}\Bigg(\sum_{x\in R_h}\vr_{x/h}u_j(x)\Bigg)^2\\
& \overset{\eqref{bound0}}{\le} \kappa^2h^{d+4} \Bigg(\sum_{x\in R_h}|\vr_{x/h}|\Bigg)^2 \sum_{j\ge 1} \lambda_j^{-\frac s2 + 2l_0}
\end{align*}
Now using Proposition \ref{prop:Weyl} we conclude that the sum in the right hand side in finite whenever $s> d/2+\floor{d/2} +1$. Thus we have shown that $\psi_h \in \mathcal H^{-s}(D)$ for all $s> d/2+\floor{d/2} +1$ and we have
\begin{align}\label{psi_h_norm}
\|\psi_h \|_{-s}^2 = \sum_{j\ge 1} \lambda_j^{-\frac s2} (\psi_h, u_j)^2.
\end{align}
The result we want to show is 
\begin{theorem}[Scaling limit in $d\ge 4$]\label{thm:critical_d}
One has that, as $h\to 0$, the field $\psi_h$ converges in distribution to $\psi_D$ of Proposition~\ref{prop:series_rep_h} in the topology of $\mathcal H^{-s}(D)$ for $s>s_d$, where
	$$s_d:=\frac{d}{2} + 2\left(\ceil*{ \frac{1}{4}\left(\floor*{\frac{d}{2}}+1\right)} + \ceil*{ \frac{1}{4}\left(\floor*{\frac{d}{2}}+6\right)} -1\right).$$
\end{theorem}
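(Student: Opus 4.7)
The plan is to apply the standard two-step procedure for convergence in distribution in the separable Hilbert space $\mathcal H^{-s}(D)$: first I would establish convergence of finite-dimensional projections, then prove tightness. Because both $\psi_h$ and $\psi_D$ are centered Gaussian, the first step reduces to showing convergence of all second moments.

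\textbf{Step 1 (finite-dimensional convergence).} For $f,g\in C_c^\infty(D)$, or equivalently for $f,g$ in the eigenbasis $\{u_j\}$, one computes
\[
\E[(\psi_h, f)(\psi_h, g)]=\kappa^2 h^{d+4}\sum_{x,y\in R_h}G_{\Lambda_N}(x/h, y/h)\,f(x)\,g(y)=h^d\sum_{x\in R_h}f(x)\,v_{h,g}(x),
\]
where $v_{h,g}(x):=\kappa^2 h^4\sum_{y\in R_h}G_{\Lambda_N}(x/h, y/h)\,g(y)$ is the unique discrete solution on $\Lambda_N$ of $\Delta_1^2 v_{h,g}=g$ with zero discrete boundary conditions (cf.~\eqref{eq:cov_Lambda}). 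The Thomée approximation scheme discussed in Appendix~\ref{appendix:T} asserts precisely that $v_{h,g}$ converges uniformly on $R_h$ to the classical solution $u_g=B_0 g$ of the continuum biharmonic Dirichlet problem~\eqref{eq:bvprobl_class}, with quantitative control in terms of Sobolev norms of $u_g$. The resulting Riemann sum therefore converges to $\int_D f(x) u_g(x)\,\De x=\la f, B_0 g\ra_{L^2}$, which by Theorem~\ref{thm:eigenfunctions} and the Wiener series in Proposition~\ref{prop:series_rep_h} equals $\E[(\psi_D,f)(\psi_D,g)]$.

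\textbf{Step 2 (tightness).} The embedding $\mathcal H^{-s_0}(D)\hookrightarrow \mathcal H^{-s}(D)$ is compact for any $s_0<s$ (an immediate consequence of the spectral representation of these norms together with Weyl's law), so tightness follows from Markov's inequality once I exhibit $s_0\in(s_d,s)$ with $\sup_h\E\|\psi_h\|_{-s_0}^2<\infty$. By~\eqref{psi_h_norm},
\[
\E\|\psi_h\|_{-s_0}^2=\sum_{j\ge 1}\lambda_j^{-s_0/2}\,\E[(\psi_h,u_j)^2].
\]
For each $j$ the term $\E[(\psi_h,u_j)^2]$ is again of the form $h^d\sum_{x\in R_h} u_j(x)\,v_{h,u_j}(x)$. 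Using the quantitative version of Thomée to compare $v_{h,u_j}$ with the exact solution $B_0 u_j = \lambda_j^{-1}u_j$, and Lemma~\ref{lem:bounds_GGS} to bound $\|u_j\|_{C^0}$ by $\lambda_j^{l_0}$ and the higher derivatives of $u_j$ appearing in the error by $\lambda_j^{l_5}$, I expect a uniform-in-$h$ bound of the form $\E[(\psi_h,u_j)^2]\le C\,\lambda_j^{l_0+l_5-1}$. Inserting this into the series and invoking Weyl's law $\lambda_j\asymp j^{4/d}$ (Proposition~\ref{prop:Weyl}), one finds that $\sum_j \lambda_j^{-s_0/2+l_0+l_5-1}$ converges exactly when $s_0>d/2+2(l_0+l_5-1)=s_d$; any $s_0\in(s_d,s)$ will then do.

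The main obstacle is this uniform bound on $\E[(\psi_h,u_j)^2]$ with the precise $\lambda_j$-dependence that produces the stated threshold $s_d$. What makes this delicate is that the Thomée estimates must remain effective up to $\partial D$, a regime where no direct asymptotic information on the discrete bilaplacian Green's function is available (most critically in $d=4$, see the discussion after Lemma~1.1). As stressed in the introduction, the essential feature of the Thomée scheme that allows the argument to go through is its handling of boundary effects via a rescaling of the discrete Sobolev norm near $\partial D$; this, together with the interplay between the derivative bounds of Lemma~\ref{lem:bounds_GGS} and Weyl's law, is what selects the exponent $s_d$.
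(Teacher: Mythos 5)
Your proposal follows essentially the same two-step strategy as the paper: finite-dimensional convergence via the Thom\'ee discrete-to-continuum approximation of the biharmonic Dirichlet problem (Proposition~\ref{f1}), and tightness via the uniform $\lambda_j^{l_0+l_5-1}$ bound on $\E[(\psi_h,u_j)^2]$ combined with Weyl's law and the compact embedding $\mathcal H^{-s_0}(D)\hookrightarrow\mathcal H^{-s}(D)$ (Lemma~\ref{lem:limsup_psi}, Theorem~\ref{Rel:thm}), followed by the standard subsequence argument. One small imprecision: Theorem~\ref{thm:one} controls the error $v_{h,g}-u_g$ in the discrete $\|\cdot\|_{h,\,grid}$ norm, not uniformly on $R_h$, but the paper passes to the Riemann-sum limit by Cauchy--Schwarz, which is all your argument actually needs.
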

\begin{remark}
	An analogous result holds in $d=2,\,3$, but we will not discuss it here as it is superseded by Theorem~\ref{thm:low_d}. 
\end{remark}
\subsection{Proof of the scaling limit (Theorem~\ref{thm:critical_d})}
Once again we need to prove tightness and ``convergence of marginal laws''. In $d\ge 4$ however we are concerned with a field which is not defined 
pointwise, so that ``marginal'' from now takes on the meaning of the law of $(\psi_h,\,f)$, namely the action of $\psi_h$, 
seen as a distribution, on 
the test function $f$. The results are built on the approximation of the continuum Dirichlet problem for the bilaplacian by \cite{thomee}, combined with classical 
embeddings for Sobolev spaces.
\subsubsection{Convergence of the marginals}\label{sec:critical_d}
To prove that the scaling limit is indeed $\psi_D$ we first have to find the marginal limiting laws. The set $C_c^\infty(D)$ is dense in $\mathcal H^{s}_0(D)$, so we can use only smooth and compactly 
supported functions to test the convergence.
\begin{proposition}\label{f1}
	$(\psi_{h},\,f)$ converges in law to $(\psi_D,\,f)$ as $h\to 0$ for any $f$ smooth and compactly supported in $D.$ 
\end{proposition}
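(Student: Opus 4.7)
The plan is to reduce the convergence in distribution to convergence of variances via Gaussianity, and then invoke the approximation theorem of Thom\'ee discussed in Appendix~\ref{appendix:T}. Both $(\psi_h,\,f)$ and $(\psi_D,\,f)$ are centered Gaussian: the latter by Proposition~\ref{prop:series_rep_h}, the former because it is a linear functional of the Gaussian field $\varphi$. Hence it suffices to prove
\[\var\left((\psi_h,\,f)\right)\longrightarrow\var\left((\psi_D,\,f)\right)\qquad\text{as }h\to 0.\]

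To identify the limiting variance, Parseval in the basis $\{\lambda_j^{-1/2}u_j\}$ of $H_0^2(D)$ together with Remark~\ref{rem:equiv_index} gives
\[\var((\psi_D,\,f))=\sum_{j\ge 1}\lambda_j^{-1}\langle f,u_j\rangle_{L^2}^2=\|f\|_{-2}^2=\int_D u_f(x) f(x)\,\De x,\]
where $u_f\in H_0^2(D)\cap C^\infty(\overline D)$ is the classical solution of the biharmonic Dirichlet problem~\eqref{eq:bvprobl_class} with datum $f$; smoothness follows from elliptic regularity because $\partial D$ is smooth and $f\in C_c^\infty(D)$.

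For the discrete side, using~\eqref{eq:cov_Lambda} and the definition of $\psi_h$ I would obtain
\[\var((\psi_h,\,f))=\kappa^2 h^{d+4}\sum_{x,y\in R_h}f(x)f(y)\,G_{\Lambda_N}(x/h,y/h)=h^d\sum_{x\in R_h}v_h(x)f(x),\]
where $v_h:D_h\to\R$ vanishes on $D_h\setminus R_h$ and solves $\Delta_h^2 v_h=f$ on $R_h$, with $\Delta_h$ the scaled discrete Laplacian from Appendix~\ref{appendix:T}. The identification is a change of variable from $\mathbb{Z}^d$ to $h\mathbb{Z}^d$ in the Green's function, using the scaling $G_{\Lambda_N}(x/h,y/h)=\kappa^{-2}h^{d-4}G_h(x,y)$ already exploited in Section~\ref{sec:d<4}; the factor $\kappa^2 h^{d+4}$ in the definition of $\psi_h$ produces precisely the right powers of $h$.

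The last step is to apply Thom\'ee's approximation theorem, which---since $\partial D$ is smooth and $f$ is compactly supported and smooth---yields $v_h\to u_f$ uniformly on $D_h$. Splitting
\[h^d\sum_{x\in R_h}v_h(x)f(x)=h^d\sum_{x\in R_h}(v_h-u_f)(x)f(x)+h^d\sum_{x\in R_h}u_f(x)f(x),\]
the first term is bounded by $\|v_h-u_f\|_{L^\infty(D_h)}\,|D|\,\|f\|_\infty\to 0$ and the second is a Riemann sum converging to $\int_D u_f f\,\De x=\var((\psi_D,f))$, concluding the argument. The main obstacle will be to check that Thom\'ee's quantitative estimates really provide a mode of convergence strong enough to pass to the limit in the discrete pairing \emph{up to the boundary} of $D$; this is precisely the reason the paper follows Thom\'ee's approach rather than a bulk-only argument.
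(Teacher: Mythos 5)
Your overall architecture matches the paper's proof exactly: reduce to variance via Gaussianity, identify $\var((\psi_D,f))=\int_D u_f f\,\De x$ through Remark~\ref{rem:equiv_index}, recognise $\var((\psi_h,f))=h^d\sum_{x\in R_h}v_h(x)f(x)$ where $v_h$ (the paper's $H_h$) solves the discrete Dirichlet problem, and then invoke Thom\'ee. The problem lies in the \emph{mode} of convergence you extract from Thom\'ee. You assert that the approximation theorem yields $v_h\to u_f$ uniformly on $D_h$ and then bound the error term by $\|v_h-u_f\|_{L^\infty(D_h)}\,|D|\,\|f\|_\infty$. But the quantitative estimate derived in the paper's Appendix~\ref{appendix:T} (Theorem~\ref{thm:one}) is a discrete $\ell^2$ bound, $\|R_h e_h\|_{h,\,grid}\le C h^{1/2}$, not a sup-norm bound. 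A discrete $\ell^2$ bound of this form does not imply a sup-norm bound that vanishes: writing $\|e_h\|_{h,\,grid}^2=h^d\sum_\xi e_h(\xi)^2\le Ch$, the best one gets pointwise is $|e_h(\xi)|\le C h^{(1-d)/2}$, which blows up for $d\ge 2$. So the uniform convergence you invoke is not available from the material at hand, and the step that uses it is a genuine gap.

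The fix is small and is precisely what the paper does: instead of trying to bound the error term in sup-norm, bound it by discrete Cauchy--Schwarz,
\[
\Bigl|h^d\sum_{x\in R_h}e_h(x)f(x)\Bigr|\le \|R_h e_h\|_{h,\,grid}\,\|f\|_{h,\,grid},
\]
and observe that $\|f\|_{h,\,grid}$ is bounded uniformly in $h$ since $f\in C_c^\infty(D)$, while $\|R_h e_h\|_{h,\,grid}\le Ch^{1/2}\to 0$ by Theorem~\ref{thm:one}. The Riemann-sum part of your argument is fine as stated. Your closing caveat anticipates exactly this issue; the paper resolves it by never leaving the $\ell^2_h$ framework rather than by upgrading to $L^\infty$ control.
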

\begin{proof}
	Since the Gaussian field $\varphi$ is centered, we shall focus on the convergence of the variance only. Note that $\var\left(\psi_D,\,f\right)=\|f\|^2_{-2}$. Remark~\ref{rem:equiv_index} tells us that 
	we can limit ourselves to showing that
	\[
	\lim_{h\to 0}\var( \psi_h,\, f)= \int_{D} u(x) f(x)\De x
	\]
	where $u$ is the solution of \eqref{eq:bvprobl_class}. 
	We define 
	$$G_{R_h}(x,y):=\E[ \vr_{x/h}\vr_{y/h}]\,,\,\,x,y \in D_h.$$
	Note that if $\Delta_h$ (defined in Appendix~\ref{appendix:T}) is the discrete Laplacian on $h\Z^d$ then by ~\eqref{eq:cov_Lambda} we have, for all $x\in R_h$,
	\[ \left\{\begin{array}{lr}
	\Delta_h^2 G_{R_h}(x,y) = \frac{4d^2}{h^4}\delta_{x}(y), & y\in R_h \\
	G_{R_h}(x,y) = 0 ,& y\notin R_h
	\end{array}\label{eq:G_h}\right..
	\]
	We have
	\begin{align*}
	\var[(\psi_h, f)]&= \kappa^2\sum_{x, y\in R_h} h^{d+4} G_{R_h}(x,y) f(x) f(y)\\
	&=\sum_{x\in R_h} h^d H_h(x) f(x)
	\end{align*}
	where $H_h(x)=\kappa^2 \sum_{y\in R_h}h^4 G_{R_h}(x,y) f(y)$, $x\in D_h$. It is immediate that $H_h$ is the solution of the following Dirichlet problem,
	\[\begin{cases}
	\Delta_h^2 H_h(x) = f(x),& \quad x\in R_h\\
	H_h(x)= 0, &\quad x\notin R_h.
	\end{cases}\]
	It is known that the above discrete solution is close to the continuum solution. The details of the result are described in 
	Appendix~\ref{appendix:T}; 
	here we only recall that if we define $e_h(x):= u(x)-H_h(x)$ for $x\in D_h$ and $R_h f$ is the restriction of a function $f$ to the set 
	$R_h$ as in~\eqref{eq:R_h}, then from Theorem \ref{thm:one} we have
	\begin{equation}\label{eq:thomee2}
	\|R_he_h\|_{h,\,grid} \le Ch^{1/2}.
	\end{equation}
	We have defined $\|f\|_{h,\,grid}^2:=h^d\sum_{\xi\in h\Z^d}f(\xi)^2$, where $f$ is any grid function with finite support.
	Hence we get that
	$$\var[(\psi_h, f)]= -\sum_{x\in R_h} e_h(x) f(x)h^d + \sum_{x\in R_h} u(x) f(x) h^d.$$
	Note that by Cauchy--Schwarz the first term in absolute value is bounded by $ \|R_he_h\|_{h,\,grid}\|f\|_{h,\,grid}$ and it goes to zero by~\eqref{eq:thomee2} as $h\to 0$. 
	For the second term we have
	\begin{equation}\label{eq:limit1}
	\lim_{h\to 0}\sum_{x\in R_h} u(x) f(x) h^d= \int_{D} u(x) f(x)\De x.
	\end{equation} 
\end{proof}
\subsubsection{Tightness}
%
We next prove the following lemma.
\begin{lemma}\label{lem:limsup_psi}
	$$\limsup_{h\to 0}\E[\|\psi_h\|_{-s}^2]<\infty \quad \forall \, s>s_d.$$
\end{lemma}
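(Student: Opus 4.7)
The plan is to exploit the spectral expansion \eqref{psi_h_norm}
\[
\E[\|\psi_h\|_{-s}^2] = \sum_{j\ge 1} \lambda_j^{-s/2}\,\E\bigl[(\psi_h,u_j)^2\bigr]
\]
and to bound each term $\E[(\psi_h,u_j)^2]$ quantitatively in $j$. Since the eigenfunctions $u_j$ themselves are smooth test functions in $C_c^\infty$-closure sense (via Corollary~\ref{cor:eigen_plus_smooth}), I intend to mimic the variance computation carried out in Proposition~\ref{f1} with $f$ replaced by $u_j$. This gives
\[
\E\bigl[(\psi_h,u_j)^2\bigr] \;=\; \sum_{x\in R_h} H_h^{(j)}(x)\,u_j(x)\,h^d,
\]
where $H_h^{(j)}$ is the unique grid function solving the discrete Dirichlet problem $\Delta_h^2 H_h^{(j)} = u_j$ on $R_h$ with vanishing boundary values.

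The crucial structural observation is that because $u_j$ is itself an eigenfunction of $\Delta^2$ with eigenvalue $\lambda_j$ and the correct homogeneous Dirichlet boundary conditions, the continuum Dirichlet problem $\Delta^2 U_j = u_j$ has the explicit solution $U_j=\lambda_j^{-1}u_j$. Consequently the Riemann integral corresponding to the sum above equals $\int_D U_j u_j\,\De x = \lambda_j^{-1}$, and this will be the leading-order contribution after summation. I would then split
\[
\sum_{x\in R_h} H_h^{(j)}(x)u_j(x)h^d \;=\; \lambda_j^{-1}\sum_{x\in R_h} u_j(x)^2 h^d \;+\; \sum_{x\in R_h}\bigl(H_h^{(j)}(x)-U_j(x)\bigr)u_j(x)\,h^d
\]
and treat the two pieces separately: the first by a standard Riemann-sum estimate against $\int_D u_j^2\,\De x = 1$, and the second by Cauchy--Schwarz together with the Thom\'ee-type bound. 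Specifically, the quantitative approximation result (Theorem~\ref{thm:one} in Appendix~\ref{appendix:T}) gives a control of the form
\[
\|R_h(H_h^{(j)}-U_j)\|_{h,\,grid} \;\le\; C h^{1/2}\,\bigl(\text{a $C^5$-type norm of }u_j\bigr),
\]
and Lemma~\ref{lem:bounds_GGS} bounds this Sobolev norm by $C\lambda_j^{l_5}$, while $\|R_h u_j\|_{h,\,grid}\le C\lambda_j^{l_0}$ by the sup-bound \eqref{bound0}. Together this yields
\[
\E\bigl[(\psi_h,u_j)^2\bigr] \;\le\; \lambda_j^{-1} \;+\; C\,h^{1/2}\lambda_j^{l_0+l_5},
\]
uniformly in $j$ and for all sufficiently small $h$.

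Plugging this into the spectral expansion and invoking the Weyl asymptotics $\lambda_j\sim c^{-d/4} j^{4/d}$ from Proposition~\ref{prop:Weyl},
\[
\E[\|\psi_h\|_{-s}^2] \;\le\; C\sum_{j\ge 1}\lambda_j^{-s/2-1} \;+\; C h^{1/2}\sum_{j\ge 1}\lambda_j^{-s/2+l_0+l_5},
\]
the first series converges for any $s>(d-4)/2$, while the second series, once checked against the Weyl exponent, converges for $s>s_d$ (the subtracted $-2$ in the definition of $s_d$ coming from the extra factor $h^{1/2}$ which we may trade off against a borderline logarithmic divergence by standard summation by parts, or equivalently by splitting the sum into a low-$j$ piece handled via Thom\'ee and a high-$j$ piece handled via the crude estimate from the discrete Green's function). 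Taking $\limsup_{h\to 0}$ then kills the $h^{1/2}$ contribution entirely, yielding the claimed finite limsup. The main obstacle, as should be evident, is to pin down the Thom\'ee-type bound with explicit dependence on the Sobolev norm of $u_j$ so that the dependence on $\lambda_j$ of the error is sharp enough to match $s_d$; this requires the quantitative refinement of \cite{thomee} which the authors promise in the appendix.
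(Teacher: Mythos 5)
Your overall strategy is the same as the paper's: expand $\E[\|\psi_h\|_{-s}^2]$ in the eigenbasis, observe that for $f=u_j$ the continuum problem has the explicit solution $U_j=\lambda_j^{-1}u_j$, use the Thom\'ee approximation of Theorem~\ref{thm:one} to control the error between the discrete and continuum Dirichlet solutions, and then sum using Weyl's law. The paper does exactly this.

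However, there is a concrete error in the exponent that breaks the bound as you have written it. Theorem~\ref{thm:one} controls $\|R_h e_h\|_{h,\,grid}$ in terms of $M_5$, the $C^5$-norm of the \emph{solution} $u$ of the continuum Dirichlet problem, not of the right-hand side $f$. In your application $u=U_j=\lambda_j^{-1}u_j$, so by Lemma~\ref{lem:bounds_GGS} one has $M_5\le C\lambda_j^{-1}\lambda_j^{l_5}=C\lambda_j^{l_5-1}$. You wrote instead that the Thom\'ee error is bounded by a ``$C^5$-type norm of $u_j$'' of size $C\lambda_j^{l_5}$, which is off by one power of $\lambda_j$. With your exponent, the error piece of the spectral sum reads $Ch^{1/2}\sum_{j}\lambda_j^{-s/2+l_0+l_5}$, and by Proposition~\ref{prop:Weyl} this series diverges for every $s$ in the range $s_d< s\le s_d+2$. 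Since the divergence occurs for each fixed $h$, the prefactor $h^{1/2}$ is of no help: the right-hand side is $+\infty$ for every $h$ and taking $\limsup_{h\to 0}$ does not make it finite. The parenthetical remark about ``trading off the $-2$ against a borderline logarithmic divergence by summation by parts'' or ``splitting into a low-$j$ and a high-$j$ piece'' does not patch this, because no such splitting can control $\E[(\psi_h,u_j)^2]$ for the high-$j$ tail without \emph{some} uniform-in-$h$ bound with exponent $l_0+l_5-1$ rather than $l_0+l_5$.

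Once you insert the correct $M_5\le C\lambda_j^{l_5-1}$ (and similarly $M_2\le C\lambda_j^{l_2-1}$), the Cauchy--Schwarz bound gives $\E[(\psi_h,u_j)^2]\le C\lambda_j^{l_0+l_5-1}$ uniformly in $h\le 1$ (this is precisely the paper's bound), the error series $\sum_j\lambda_j^{-s/2+l_0+l_5-1}$ converges exactly for $s>s_d$, and the proof closes with no further acrobatics. A secondary, minor point: justifying the ``Riemann-sum against $\int u_j^2=1$'' for the leading term uniformly in $j$ is not immediate (the Riemann error grows with $j$); the paper sidesteps this by the cruder sup-bound $C\lambda_j^{2l_0-1}$, which is dominated by $C\lambda_j^{l_0+l_5-1}$ anyway, and you should do the same.
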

\begin{proof}
	From \eqref{psi_h_norm} we have
	$$\E\left[\|\psi_h\|_{-s}^2\right] =\sum_{j\in \N}\lambda_j^{-s/2}\E[(\psi_h\,,\,u_j)^2].$$
	Note that $u=\lambda_j^{-1}u_j$ is the unique solution of \eqref{eq:bvprobl_class} for $f=u_j$. We therefore obtain as in the proof of proposition \ref{f1} by defining $e_{h,j}$ to be the error corresponding to $f=u_j$
	\begin{align*}
	\E[(\psi_h\,,\,u_j)^2]&= -\sum_{x\in R_h} e_{h,j}(x) u_j(x)h^d + \sum_{x\in R_h} \lambda_j^{-1}u_j(x) u_j(x) h^d\\
	& \le C\sup_{x\in D}|u_j(x)| \left({h^d \sum_{x\in R_h} e_{h,j}(x)^2}\right)^{1/2}+ C\lambda_j^{-1}\left(\sup_{x\in D}|u_j(x)|\right)^2.
	\end{align*}
	Using Theorem \ref{thm:one} along with the bounds~\eqref{bound0}-\eqref{bound1}-\eqref{bound2} we obtain
	\begin{align*}
	\E[(\psi_h\,,\,u_j)^2]&\le C \lambda_j^{l_0} [\lambda_j^{2l_5-2} h^2 + h\left(\lambda_j^{2l_5-2} h^6 + \lambda_j^{2l_2-2}\right)]^{\frac12} + C\lambda_j^{2l_0-1}\\
	&\le C \lambda_j^{l_0 + l_5-1}.
	\end{align*}	
	Therefore we have 
	\begin{align*}
	\E\left[\|\psi_h\|_{-s}^2\right]\le C \sum_{j\in \N}\lambda_j^{-\frac s2}\lambda_j^{l_0 + l_5-1}.
	\end{align*}
	Thus 
	$$\limsup_{h\to 0}\E[\|\psi_h\|_{-s}^2]<\infty \qquad \text{ if } \qquad \sum_{j\in \N}\lambda_j^{-\frac s2+l_0 + l_5-1}<\infty.$$ And from proposition \ref{prop:Weyl} we obtain that $\sum_{j\in \N}\lambda_j^{-\frac s2+l_0 + l_5-1}<\infty$ whenever $s>s_d$.
\end{proof}

To show tightness of $\psi_h$ we need the following theorem:
 \begin{theorem}\label{Rel:thm}
 	For $0\le s_1<s_2$, $\mathcal H^{-s_1}(D)$ is compactly embedded in $\mathcal H^{-s_2}(D)$.
 \end{theorem}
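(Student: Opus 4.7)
The natural approach is to identify both Hilbert spaces with $\ell^2$ using the eigenbasis $\{u_j\}$ from Theorem~\ref{thm:eigenfunctions}, under which the inclusion becomes a diagonal multiplier whose entries tend to zero; such multipliers are compact. Concretely, the map
\[
\Phi_s : \mathcal H^{-s}(D) \longrightarrow \ell^2(\mathbb N),\qquad v \longmapsto \bigl(\lambda_j^{-s/4}(v,u_j)\bigr)_{j\ge 1}
\]
is an isometric isomorphism by the very definition of the norm $\|\cdot\|_{-s}$ (and by standard Riesz-type arguments, as already employed in the discussion of dual spaces in the paper). Under these identifications, the inclusion $\iota : \mathcal H^{-s_1}(D) \hookrightarrow \mathcal H^{-s_2}(D)$ corresponds to the operator
\[
\Phi_{s_2}\circ \iota\circ \Phi_{s_1}^{-1}\colon (a_j)_{j\ge 1}\longmapsto \bigl(\lambda_j^{(s_1-s_2)/4}a_j\bigr)_{j\ge 1}
\]
on $\ell^2$, i.e.\ multiplication by the sequence $m_j:=\lambda_j^{(s_1-s_2)/4}$.

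The first step is thus to verify that $\Phi_s$ is indeed a surjective linear isometry. Injectivity and isometry are immediate from the formula for $\|\cdot\|_{-s}$; surjectivity follows because any $(a_j)\in\ell^2$ determines a continuous linear functional on $\mathcal H_0^s(D)$ through $f\mapsto \sum_j a_j\lambda_j^{s/4}\langle f,u_j\rangle_{L^2}$, which by Cauchy--Schwarz is bounded by $\|(a_j)\|_{\ell^2}\|f\|_s$. Second, I would observe that because $s_1 < s_2$ and $\lambda_j \uparrow \infty$ by Proposition~\ref{prop:Weyl}, one has $m_j \to 0$.

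The final step is the classical fact that multiplication by a null sequence on $\ell^2$ is a compact operator: the truncation $M_N(a_j):= m_j a_j\mathbf{1}_{j\le N}$ is of finite rank, and
\[
\|M - M_N\|_{\ell^2\to\ell^2}=\sup_{j>N}|m_j|\xrightarrow[N\to\infty]{}0,
\]
so $M$ is the operator-norm limit of finite-rank operators and hence compact. Conjugating back by the isometries $\Phi_{s_1},\Phi_{s_2}$ transfers compactness to $\iota$, which is the desired conclusion.

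There is essentially no hard step: the only point requiring slight care is confirming that $\Phi_s$ is genuinely onto $\ell^2$ (so that the conjugation argument is legitimate), which is a short verification using the Riesz representation theorem applied with the inner product defining $\mathcal H_0^s(D)$. Alternatively, a direct diagonal-extraction argument would work: given a bounded sequence $(v_n)\subset\mathcal H^{-s_1}(D)$, extract a subsequence along which $(v_n,u_j)\to c_j$ for each $j$, define $v$ through these coefficients, and split the tail $\|v_n-v\|_{-s_2}^2 = \sum_{j\le J} + \sum_{j>J}$ controlling the first sum by coefficient convergence and the second by $\sup_{j>J}\lambda_j^{(s_1-s_2)/2}\cdot(2\sup_n\|v_n\|_{-s_1}^2+2\|v\|_{-s_1}^2)\to 0$. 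The multiplier viewpoint is however cleaner and makes the analogy with the classical Rellich--Kondrachov embedding transparent.
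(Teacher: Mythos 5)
Your proposal is correct, and it takes a genuinely different route from the paper's proof. The paper argues by duality: it reduces the statement to showing that the \emph{primal} embedding $\mathcal H_0^{s_2}(D)\hookrightarrow\mathcal H_0^{s_1}(D)$ is compact (implicitly invoking Schauder's theorem, or its Hilbert-space avatar, to transfer compactness to the adjoint embedding of the duals), and then proves that primal compactness by a covering argument: it splits the unit ball via the finite-codimension subspace $Z=\{f : \langle f, u_j\rangle_{L^2}=0 \text{ for } j< N\}$, bounds the $\|\cdot\|_{s_1}$-norm on $B\cap Z$ by the tail of the eigenvalues, and covers the finite-dimensional quotient with finitely many balls. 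Your approach works directly on $\mathcal H^{-s}(D)$ by conjugating the inclusion with the isometric isomorphisms $\Phi_s$ onto $\ell^2$, so that the inclusion becomes multiplication by the null sequence $m_j=\lambda_j^{(s_1-s_2)/4}$, which is compact as a norm-limit of finite-rank truncations. Both proofs hinge on the same analytic fact, namely $\lambda_j\uparrow\infty$; but your version is cleaner in that it sidesteps the duality step entirely, while the paper's version exhibits explicitly the Rellich-type decomposition into a finite-dimensional part plus a small tail. (Your briefly sketched diagonal-extraction alternative is essentially the paper's argument recast as subsequence extraction rather than a covering bound.) One minor remark: in verifying surjectivity of $\Phi_s$ you should note, as you do implicitly, that the resulting functional acts on all of $\mathcal H_0^s(D)$, not merely on $C_c^\infty(D)$; the Cauchy--Schwarz bound you write down furnishes exactly the required continuity.
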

 \begin{proof}
 	It is enough to prove that $\mathcal H_0^{s_2}(D)$ is compactly embedded in $\mathcal H_0^{s_1}(D)$. 
 	The inclusion $\mathcal H_0^{s_2}(D)\hookrightarrow\mathcal H_0^{s_1}(D)$ is linear and 
 	continuous. To prove the inclusion to be compact let $B$ be the unit ball of 
 	$\mathcal H_0^{s_2}(D)$. Given $\epsilon>0$ we choose $N\in\mathbb{N}$ large enough so 
 	that $N^{s_1-s_2}<\epsilon^4$. Now we consider the subspace $Z$ of $\mathcal H_0^{s_2}(D)$ 
 	defined by $Z:=\left\lbrace f\in\mathcal H_0^{s_2}(D):(f,\,u_j)_{L^2}=0\,\,\forall\,j<N\right\rbrace$. 
 	Then for any $f\in B\cap Z$ we have
     \begin{align*}
     \lVert f\rVert_{s_1}^2&= \sum_{j\in \N}\lambda_j^{s_1/2}(f,u_j)_{L^2}^2=\sum_{j\ge N}\lambda_j^{s_1/2}(f,u_j)_{L^2}^2=\sum_{j\ge N}\lambda_j^{s_1/2-s_2/2}\lambda_j^{s_2/2}(f,u_j)_{L^2}^2\\
     &\le N^{(s_1-s_2)/2}\sum_{j\ge N}\lambda_j^{s_2/2}(f,u_j)_{L^2}^2= N^{(s_1-s_2)/2}\lVert f\rVert_{s_2}^2 < \epsilon^2.
     \end{align*}
     Also note that the dimension of $\mathcal H_0^{s_2}(D)/Z$ is finite, so the unit ball of $\mathcal H_0^{s_2}(D)/Z$ is compact and hence can be covered by finitely many balls of radius $\epsilon$. Hence $B$ can be covered by finitely many balls of radius $2\epsilon$ in the $\lVert \cdot\rVert_{s_1}$-norm. Since $\epsilon$ is arbitrary, $B$ is precompact in $\mathcal H_0^{s_1}(D)$. Therefore the inclusion map is compact.
 \end{proof}
 \begin{corollary}
 	The sequence $(\psi_h)_{h=\frac1N, N\in \mathbb{N}}$ is tight in $\mathcal H^{-s}(D)$ for all $s>s_d$.
 \end{corollary}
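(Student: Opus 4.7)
The plan is to combine the uniform second-moment bound from Lemma~\ref{lem:limsup_psi} with the compact embedding from Theorem~\ref{Rel:thm} via a standard Markov-inequality argument. Fix $s>s_d$. Since the constant $s_d$ is sharp in the proof of Lemma~\ref{lem:limsup_psi}, I may pick an auxiliary index $s'$ with $s_d<s'<s$.

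First I would apply Lemma~\ref{lem:limsup_psi} at the level $s'$ to obtain
\[
M:=\sup_{h=1/N,\,N\in\N}\E\bigl[\|\psi_h\|_{-s'}^{2}\bigr]<\infty,
\]
the finiteness of the supremum being ensured by the $\limsup$ statement together with the finiteness of the individual expectations $\E[\|\psi_h\|_{-s'}^2]$ (which is clear since $\psi_h\in\mathcal H^{-s'}(D)$ almost surely). Given $\epsilon>0$, Markov's inequality then yields
\[
\prob\bigl(\|\psi_h\|_{-s'}>R\bigr)\le\frac{M}{R^{2}}
\]
for every $R>0$, so choosing $R=R(\epsilon)$ with $M/R^{2}<\epsilon$ gives $\prob(\|\psi_h\|_{-s'}\le R)\ge 1-\epsilon$ uniformly in $h$.

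Next I would transfer this bound from $\mathcal H^{-s'}(D)$ to $\mathcal H^{-s}(D)$ by compactness. Let $B_R:=\{v\in\mathcal H^{-s'}(D):\|v\|_{-s'}\le R\}$ be the closed ball in $\mathcal H^{-s'}(D)$. Since $s'<s$, Theorem~\ref{Rel:thm} ensures that the inclusion $\mathcal H^{-s'}(D)\hookrightarrow\mathcal H^{-s}(D)$ is compact, so the image of $B_R$ under this inclusion is relatively compact in $\mathcal H^{-s}(D)$. Denote its closure in $\mathcal H^{-s}(D)$ by $K_\epsilon$; it is a compact subset of $\mathcal H^{-s}(D)$. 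On the event $\{\|\psi_h\|_{-s'}\le R\}$ we have $\psi_h\in B_R\subset K_\epsilon$, hence
\[
\prob(\psi_h\in K_\epsilon)\ge 1-\epsilon,
\]
which is precisely Prokhorov's tightness criterion for the family $(\psi_h)_{h=1/N}$ viewed as random elements of $\mathcal H^{-s}(D)$.

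There is no real obstacle at this stage: all the substantial work has already been carried out, namely the moment bound (which required the Thom\'ee approximation together with the eigenfunction estimates of Lemma~\ref{lem:bounds_GGS} and Weyl's law) and the compactness of the Sobolev embedding between the negative-index spaces $\mathcal H^{-s'}(D)$ and $\mathcal H^{-s}(D)$. The corollary is therefore nothing more than a textbook Markov-plus-compact-embedding deduction, and the only minor care needed is to insert the auxiliary parameter $s'$ strictly between $s_d$ and $s$ so that both Lemma~\ref{lem:limsup_psi} and Theorem~\ref{Rel:thm} can be invoked.
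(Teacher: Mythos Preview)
Your argument is essentially identical to the paper's: pick an intermediate index $s_d<s'<s$, use Lemma~\ref{lem:limsup_psi} at level $s'$ together with Markov's inequality to confine $\psi_h$ to a large ball in $\mathcal H^{-s'}(D)$ with high probability, and then invoke the compact embedding of Theorem~\ref{Rel:thm} to turn that ball into a compact set in $\mathcal H^{-s}(D)$. One small slip: the claim that $\E[\|\psi_h\|_{-s'}^2]<\infty$ for each fixed $h$ does not follow from ``$\psi_h\in\mathcal H^{-s'}(D)$ almost surely''; rather, the proof of Lemma~\ref{lem:limsup_psi} already gives a bound uniform in $h$, so the supremum is finite directly.
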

 \begin{proof}
 	Fix $s_0 > s_d$ and let $s_d < s_1 < s_0$. By Theorem \ref{Rel:thm}, for any $R > 0$, $\overline{B_{\mathcal H^{-s_1}(D)}(0,\,R)}$ is compact in $\mathcal H^{-s_0}(D)$. 
 	By Lemma~\ref{lem:limsup_psi} we have for some $M > 0$
 	$$\E[\|\psi_h\|_{-s_1}^2] \le M \quad \forall\,h.$$
 	Given $\epsilon > 0$, we take $R=\sqrt{{2M}{\epsilon}^{-1}}$ so that ${M}R^{-2} < \epsilon$. Now for all $h$
 	\begin{align*}
 	\prob\left(\psi_h \notin \overline{B_{\mathcal H^{-s_1}(D)}(0\,,\,R)}\right)&= \prob\left(\|\psi_h\|_{-s_1} > R\right)\le 
 	\frac{\E[\|\psi_h\|_{-s_1}^2]}{R^2}< \epsilon .
 	\end{align*}
 	Thus $(\psi_h)_h$ is tight in $\mathcal H^{-s_0}(D).$ 
 \end{proof}
Having obtained tightness and convergence of the marginals, 
 all is left to do is to combine these ideas together to show the scaling limit.
\begin{proof}[Proof of Theorem~\ref{thm:critical_d}]
  As $(\psi_h)$ is tight in $\mathcal H^{-s}(D)$, 
  it is enough to prove that every converging subsequence $(\psi_{h_i})$ 
  converges in distribution to $\psi_D$. Let $(\psi_{h_i})$ be a 
  subsequence of $(\psi_h)$ converging in distribution to $\psi$ in $\mathcal H^{-s}(D)$. 
  Then $( \psi_{h_i}, f)$ converges in distribution to $(\psi , f )$ 
  for any $f\in \mathcal H_0^s(D)$. But since $(\psi_h, f)$ converges in distribution to $( \psi_D , f )$ for all $f\in C_c^\infty(D)$, we must have $( \psi_D , f) \overset{d}{=}( \psi , f)$ for all $f\in C_c^\infty(D)$. Now let $g\in \mathcal H_0^s(D)$. 
  Since $C_c^\infty(D)$ is dense in $\mathcal H_0^s(D)$ we have a sequence $(f_k)$ in $C_c^\infty(D)$ 
  such that $f_k\rightarrow g$ in $\mathcal H_0^s(D)$. 
  Therefore $( \psi_D , f_k )$ and $(\psi , f_k)$ converge 
  to $(\psi_D , g )$ and $( \psi , g)$ respectively. 
  And hence $(\psi_D , f_k)$ and $(\psi , f_k)$ converge in distribution 
  to $( \psi_D , g)$ and $( \psi , g )$ respectively. 
  But since $( \psi_D , f_k ) \overset{d}{=}( \psi , f_k)$ for all $k$, 
  we have $(\psi_D , g ) \overset{d}{=}( \psi , g )$. Thus we 
  have $( \psi_D , f )\overset{d}{=}(\psi , f )$ for all $f\in \mathcal H_0^s(D)$. 
  Hence $\psi_D \overset{d}{=}\psi$, since the fields under considerations are linear.
 \end{proof}

%
%
%
%
 \section{Convergence in infinite volume in \texorpdfstring{$d\geq 5$}{}}\label{sec:big_d}
 \subsection{Description of the limiting field}
In this section we deal with the infinite volume membrane model defined on the whole of $\Z^d$ and show that the rescaled field 
 converges to the continuum bilaplacian field on $\R^d$. Let $\prob_N$ be the finite volume MM measure defined on $V_N$ as mentioned in the Introduction. It is known that in $d\ge 5$ there exists $\prob$ on $\R^{\Z^d}$ such that $\prob_N\to \prob$ in the weak topology of probability measures (\citet[Proposition 1.2.3] {Kurt_thesis}).  Under $\prob$, the canonical coordinates $(\vr_x)_{x\in \Z^d}$ form a centered Gaussian process with covariance given by
$$G(x,y)= \Delta^{-2}(x,\, y)= \sum_{z\in \Z^d} \Delta^{-1}(x,z)\Delta^{-1}(z,\, y)= \sum_{z\in \Z^d} \Gamma(x,z) \Gamma(z,y),$$
where $\Gamma$ denotes the covariance of the DGFF. $\Gamma$ has an easy representation in terms of the simple random walk $(S_n)_{n\ge 0}$ on $\Z^d$ given by
$$\Gamma(x, \,y)=\sum_{m\ge 0} \mathrm P_x[ S_m=y]$$
($\mathrm P_x$ is the law of $S$ starting at $x$). This entails that
\begin{equation}\label{eq:cov_membrane}
G(x,\,y)= \sum_{m\ge 0} (m+1) \mathrm P_x[ S_m=y]=\mathrm E_{x, y }\left[ \sum_{\ell,\,m=0}^{+\infty} \one_{\left\{S_m= \tilde S_\ell\right\} }\right]
\end{equation}
where $S$ and $\tilde S$ are two independent simple random walks started at $x$ and $y$ respectively.  
First one can note from this representation that $G(\cdot,\,\cdot)$ is translation invariant. The existence of the infinite volume measure in $d\ge 5$ gives that $G(0,0)<+\infty$. Using the above one can derive the following property of the covariance:
\begin{fact}[{\citet[Lemma 5.1]{Sakagawa}}]\label{fact: covariance:mm}
\begin{equation}\label{eq:cov:mm}
\lim_{\|x\|\to+\infty}\frac{G(x,0)}{\|x\|^{4-d}}=\eta_2
\end{equation}
where
$$
\eta_2=(2\pi)^{-d}\int_0^{+\infty}\int_{\R^d}\exp\left(\iota\langle \zeta,\, \theta\rangle-\frac{\|\theta\|^4 t}{4\pi^2}\right)\De\theta\De t
$$
for any $\zeta\in \mathbb{S}^{d-1}$.
\end{fact}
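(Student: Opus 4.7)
My strategy would be Fourier-analytic. Since the infinite-volume covariance satisfies $\Delta_1^2 G(\cdot,0)=\delta_0$ on $\Z^d$, Fourier inversion on the torus gives
\[
G(x,0)=\frac{1}{(2\pi)^d}\int_{[-\pi,\pi]^d}\frac{e^{\iota\theta\cdot x}}{(1-\hat p(\theta))^2}\De\theta,
\]
where $\hat p(\theta)=d^{-1}\sum_{i=1}^d\cos\theta_i$ is the one-step characteristic function of the simple random walk generating $\Gamma$ in~\eqref{eq:cov_membrane}. Since $1-\hat p(\theta)\asymp\|\theta\|^2$ near $0$ and $\hat p(\theta)<1$ elsewhere on $[-\pi,\pi]^d$, the integrand has a $\|\theta\|^{-4}$ singularity at the origin, locally integrable exactly when $d\ge 5$, which is consistent with the existence of the thermodynamic limit quoted from~\cite{Kurt_thesis}.

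To match the structural form of $\eta_2$, I would linearise the double pole using the identity $a^{-2}=\int_0^\infty e^{-a^2 t}\De t$ (valid for $a>0$) applied with $a=1-\hat p(\theta)$, and swap the order of integration by Fubini to obtain
\[
G(x,0)=\frac{1}{(2\pi)^d}\int_0^\infty\int_{[-\pi,\pi]^d}\exp\bigl(\iota\theta\cdot x-(1-\hat p(\theta))^2 t\bigr)\De\theta\,\De t.
\]
Writing $x=\|x\|\zeta$ and substituting $\theta=u/\|x\|$, $t=s\|x\|^4$, the Jacobian yields a prefactor $\|x\|^{4-d}$, while $(1-\hat p(u/\|x\|))^2\|x\|^4$ converges pointwise to a constant multiple of $\|u\|^4$ by the quadratic Taylor expansion of $\hat p$ at the origin. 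The integrand thus converges pointwise to the integrand defining $\eta_2$; the $(4\pi^2)^{-1}$ constant in Sakagawa's statement simply reflects a Fourier convention on $[-1/2,1/2]^d$ rather than on $[-\pi,\pi]^d$.

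The main technical obstacle, as expected in this kind of computation, is the justification of dominated convergence to interchange the limit with the double integral. This requires a global lower bound of the form $1-\hat p(\theta)\ge \c\min(\|\theta\|^2,1)$ on $[-\pi,\pi]^d$, which follows from the strict inequality $\hat p(\theta)<1$ for $\theta\ne0$ together with the quadratic behaviour at the origin. After rescaling this gives $(1-\hat p(u/\|x\|))^2\|x\|^4\ge \c\min(\|u\|^4,\|x\|^4)$, which controls the integrand on two regimes: for $u$ in a bounded region the contribution is dominated by $s\,e^{-\c\|u\|^4 s}$, whose integral over $(u,s)$ is finite precisely because $d\ge 5$ makes $\|u\|^{-4}$ locally integrable; for $u$ outside that region the integrand carries exponential decay in $s\|x\|^4$, so its contribution is uniformly small and vanishes in the limit.

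An alternative route, which would sidestep the double integral entirely, would be to exploit the convolution identity $G=\Gamma\ast\Gamma$ of~\eqref{eq:cov_membrane} together with the classical sharp asymptotic $\Gamma(x,0)\sim \c_d\|x\|^{2-d}$ for the discrete Laplacian Green's function in $d\ge 3$, and reduce the statement to the evaluation of a Riesz-type convolution integral $\int_{\R^d}\|\zeta-v\|^{2-d}\|v\|^{2-d}\De v$ obtained after rescaling the lattice convolution by $\|x\|$; this integral converges exactly in the regime $d\ge 5$ and yields an equivalent closed form for $\eta_2$.
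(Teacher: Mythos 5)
The paper does not supply a proof of this statement: it is imported verbatim as a Fact from Sakagawa's Lemma~5.1, so there is no internal argument to compare against. That said, your Fourier-analytic route — torus inversion $G(x,0)=(2\pi)^{-d}\int_{[-\pi,\pi]^d}\mu(\theta)^{-2}e^{\iota\theta\cdot x}\De\theta$, the Schwinger-type identity $a^{-2}=\int_0^\infty e^{-a^2t}\De t$, Fubini, the scaling $\theta=u/\lVert x\rVert$, $t=s\lVert x\rVert^4$ producing the $\lVert x\rVert^{4-d}$ prefactor, and dominated convergence against $\mu(\theta)\ge c\min(\lVert\theta\rVert^2,1)$ — is the natural argument and is what the cited reference does in substance. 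Your dominating function is correct: after integrating out $s$, the bound reduces to $C\lVert u\rVert^{-4}$, locally integrable exactly for $d\ge 5$. The alternative $\Gamma\ast\Gamma$ / Riesz-convolution route is also valid, and the integral $\int\lVert\zeta-v\rVert^{2-d}\lVert v\rVert^{2-d}\De v$ indeed converges precisely when $d\ge 5$.

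One caveat worth flagging: your explanation of the $4\pi^2$ in $\eta_2$ as a Fourier convention on $[-1/2,1/2]^d$ is not checked and does not actually come out right. With the paper's own normalisation $\Delta_1 f(x)=(2d)^{-1}\sum_{y\sim x}(f(y)-f(x))$, one has $\mu(\theta)=d^{-1}\sum_i(1-\cos\theta_i)\sim\lVert\theta\rVert^2/(2d)$, hence $\mu(u/\lVert x\rVert)^2\lVert x\rVert^4\to\lVert u\rVert^4/(4d^2)$, so the exponent should carry $4d^2$, not $4\pi^2$. Switching to characters $e^{2\pi\iota\theta\cdot x}$ on $[-1/2,1/2]^d$ does not fix this either (one gets $4\pi^4/d^2$). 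The constant as printed presumably reflects Sakagawa's normalisation of the discrete bilaplacian, which differs from this paper's; your argument would yield the correct constant for the paper's conventions, but the unverified claim about why $4\pi^2$ appears should either be checked against the original reference or dropped. This is a normalisation discrepancy, not a flaw in the structure of the argument.
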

 It is convenient to consider the convergence in the space of tempered distribution (dual of the Schwartz space on $\R^d$). For this we are giving some 
 preliminary theoretical results.
 
\subsubsection{Generalized random fields and limiting field}

We consider $\mathcal S=\mathcal S(\mathbb{R}^d)$ to be the Schwartz space that consists of infinitely differentiable functions 
$f: \mathbb{R}^d\rightarrow \mathbb{R}$ such that, for all $m\in\mathbb{N}\cup\{0\}$ and $\alpha=(\alpha_1,\ldots,\alpha_d)\in(\mathbb{N}\cup\{0\})^d$,
$$\lVert f\rVert_{m,\alpha}= \sup_{x\in\mathbb{R}^d }(1+\lVert x\rVert^m)\lvert D^\alpha f(x)\rvert <\infty .$$
$\mathcal S$ is a linear vector space and it is equipped with the topology generated by the family of semi-norms $\lVert \cdot\rVert_{m,\alpha},$ $ m\in \mathbb{N}\cup\{0\}$ and $\alpha\in(\mathbb{N}\cup\{0\})^d$. The topological dual $\mathcal S^{*}$ of $\mathcal S$ is called the space of tempered distributions. For $F\in \mathcal S^{*}$ and $f\in \mathcal S$ we denote $F(f)$ by $(F,f)$. We shall work with two topologies on $\mathcal S^{*}$, the strong topology $\tau_s$ and the weak topology $\tau_w$. The strong topology $\tau_s$ is generated by the family of semi-norms $\left\lbrace e_B: B \text{ is a bounded subset of } \mathcal S \right\rbrace $ where $e_B(F)=\sup_{f\in B}(F,f) , F\in \mathcal S^{*}$. 
$\tau_w$ is induced by the family of semi-norms $\left\lbrace \lvert(\cdot\,,f)\rvert: f\in\mathcal S\right\rbrace $. In particular $F_n$ converges to $F$ in $\mathcal S^{*}$ 
with respect to the weak topology when $\lim_n (F_n,f)=(F,f)$ for all $ f\in \mathcal S$. 
It can be shown that the Borel $\sigma$-fields corresponding to both topologies coincide. Therefore we shall talk about the Borel $\sigma$-field $\mathcal B(\mathcal S^{*})$ of $\mathcal S^{*}$ without specifying the topology.

Let $(\Omega\,, \mathcal A,\,\mathrm P)$ be a probability space. By a generalized random field defined on $(\Omega, \mathcal A,\,\mathrm P)$, 
we refer to a random variable $X$ with values in $(\mathcal S^{*},\mathcal B(\mathcal S^{*}))$. For 
$(X_n)_{n\ge 1}$ and $X$ generalized random fields with laws $(\prob_{X_n})_{n\ge 1}$ and $\prob_X$ respectively, we say that 
$X_n$ converges in distribution to $X$ (and write $X_n \overset{d}\to X$) 
with respect to the strong topology if 
$$\lim_{n\to\infty}\int_{\mathcal S^{*}}\varphi(F)\mathrm{d}\prob_{X_n}(F) = \int_{\mathcal S^{*}}\varphi(F)\mathrm{d}\prob_{X}(F) \quad\forall\,\varphi\in  C_b(\mathcal S^{*}, \tau_s)$$
where $ C_b(\mathcal S^{*}, \tau_s)$ is the space of bounded continuous 
functions on $\mathcal S^{*}$ given the strong topology. The convergence in distribution with 
respect to the weak topology is defined similarly with test functions in $ C_b(\mathcal S^{*}, \tau_w)$. For a generalized random field $X$ with law $\prob_X$, we define its 
characteristic functional by 
$$ \mathcal L_X(f)=\E(\e^{\iota (X, f)})=\int_{\mathcal S^{*}}\e^{\iota (F, f)}\mathrm{d}\prob_X(F) $$
for $f\in \mathcal S$. Note that $\mathcal L_X$ is positive definite, continuous, and $\mathcal L_X(0)=1$. 
The Bochner--Minlos theorem says that the converse is also true: if a functional $\mathcal L: \mathcal S \rightarrow \mathbb{C}$ is 
positive definite, continuous at $0$ and satisfies $\mathcal L(0)=1$ then there exists a 
generalized random field $X$ defined on a probability space $(\Omega\,, \mathcal A,\,\mathrm P)$ such that $\mathcal L_X=\mathcal L$. For a proof of this theorem see 
for instance \citet[Appendix~1]{Hida:2004}. Another important feature of characteristic functions is that 
their convergence determines convergence of generalised random fields. 
This is classical result of L\'evy which was generalized and proved in the nuclear space setting first by \cite{Fernique:1968}. 
We use the version for tempered distributions which was recently proved in \cite{BOY2017}. 
\begin{fact}[{\citet[Corollary 2.4]{BOY2017}}] \label{equiv:fact}
	Let $(X_n)_{n\ge 1},\,X$ be generalized random fields. The following conditions are equivalent:
	\begin{enumerate}
		\item $X_n \overset{d}\to X$ in the strong topology.
		\item $X_n \overset{d}\to X$ in the weak topology.
		\item $\mathcal L_{X_n}(f) \to \mathcal L_{X}(f)$ for all $f\in \mathcal S$.
		\item $(X_n, f) \overset{d}\to (X, f)$ in $\mathbb{R}$  for all $f\in \mathcal S$.
	\end{enumerate}	
\end{fact}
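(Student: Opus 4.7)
The plan is to prove the four equivalences via the cycle $(1)\Rightarrow(2)\Rightarrow(4)\Rightarrow(3)\Rightarrow(1)$, with the first three implications being essentially formal and the last being the substantive one.

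For $(1)\Rightarrow(2)$, I would simply observe that the weak topology $\tau_w$ is coarser than the strong topology $\tau_s$, so every $\tau_w$-continuous function on $\mathcal{S}^*$ is automatically $\tau_s$-continuous, giving $C_b(\mathcal{S}^*,\tau_w)\subseteq C_b(\mathcal{S}^*,\tau_s)$. Hence convergence of $\E[\varphi(X_n)]$ for the larger class forces it for the smaller class. For $(2)\Rightarrow(4)$, the key is that for fixed $f\in\mathcal{S}$ the evaluation $F\mapsto(F,f)$ is $\tau_w$-continuous by the very definition of $\tau_w$; composing with any bounded continuous $g:\mathbb{R}\to\mathbb{R}$ yields a $\tau_w$-continuous bounded test function, so distributional convergence in $(\mathcal{S}^*,\tau_w)$ transfers to distributional convergence of $(X_n,f)$ in $\mathbb{R}$. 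For $(4)\Rightarrow(3)$, note $\mathcal{L}_{X_n}(f)$ is exactly the characteristic function of the real random variable $(X_n,f)$ evaluated at $1$, so the classical L\'evy continuity theorem in dimension one finishes this step.

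The main obstacle, and the only step requiring real work, is $(3)\Rightarrow(1)$. My plan would be to decompose it into two pieces: (a) tightness of $(\prob_{X_n})$ in $(\mathcal{S}^*,\tau_s)$, and (b) identification of any subsequential limit. For (a), since $\mathcal{S}$ is a nuclear Fr\'echet space, one can exploit the fact that pointwise convergence of characteristic functionals $\mathcal{L}_{X_n}(f)\to\mathcal{L}_X(f)$, together with continuity of the limit $\mathcal{L}_X$ at $0$ (guaranteed by Bochner--Minlos since $\mathcal{L}_X$ is itself a characteristic functional), forces equicontinuity of $\{\mathcal{L}_{X_n}\}$ at $0$. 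This equicontinuity, by a standard argument involving one of the Hilbertian seminorms dominating the nuclear topology, yields a uniform bound of the form $\E[\|X_n\|_{-k}^2]\le C$ in some Hilbertian dual norm whose ball is compact in a weaker dual norm; by Markov's inequality this delivers tightness in $(\mathcal{S}^*,\tau_s)$. This is the same mechanism underlying Fernique's original proof, adapted to the tempered distribution setting as in~\cite{BOY2017}.

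For (b), given tightness, Prokhorov's theorem provides subsequential $\tau_s$-limits $X'$. By the already-proved implications $(1)\Rightarrow(4)$ applied along the subsequence, $(X_{n_k},f)\overset{d}\to (X',f)$ for every $f\in\mathcal{S}$, so $\mathcal{L}_{X'}(f)=\lim_k\mathcal{L}_{X_{n_k}}(f)=\mathcal{L}_X(f)$. Since a generalized random field is determined by its characteristic functional (Bochner--Minlos uniqueness), $X'\overset{d}=X$, and the usual subsequence argument upgrades this to $X_n\overset{d}\to X$ in $\tau_s$, closing the loop. The technical heart of the argument is thus reduced to the nuclear-space equicontinuity step, which is what makes Fernique's theorem go through in this specific functional-analytic setting.
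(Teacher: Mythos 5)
The paper does not prove this statement; it is presented as a \emph{Fact} citing \cite{BOY2017} (which adapts \cite{Fernique:1968} from general nuclear spaces to tempered distributions) and is then invoked as a black box in the proof of Theorem~\ref{thm:big_d}. So there is no paper proof against which to compare, and one should evaluate your sketch on its own terms.

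Your formal chain $(1)\Rightarrow(2)\Rightarrow(4)\Rightarrow(3)$ is correct (though for $(4)\Rightarrow(3)$ you only need the trivial half of L\'evy's theorem: convergence in law of $(X_n,f)$ gives pointwise convergence of one-dimensional characteristic functions, which at $t=1$ is exactly $\mathcal L_{X_n}(f)\to\mathcal L_X(f)$; the deep direction is relevant to $(3)\Rightarrow(4)$, which your cycle bypasses). Also, continuity of $\mathcal L_X$ at $0$ follows directly from $X$ being a generalized random field by dominated convergence, not from Bochner--Minlos, which runs the implication in the opposite direction. The substantive step is $(3)\Rightarrow(1)$, and your skeleton of tightness plus identification of subsequential limits is the right one; the identification part is clean once the easy implications are in hand. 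The claim to be wary of is that pointwise convergence of $\{\mathcal L_{X_n}\}$ to a functional continuous at $0$ ``forces'' equicontinuity of the family at $0$. Even in $\mathbb{R}^d$, equicontinuity is a \emph{consequence} of L\'evy's theorem, not an input: the classical proof obtains tightness first from the truncation estimate $\mu_n(\lvert x\rvert>2/u)\le u^{-1}\int_{-u}^{u}(1-\phi_n(t))\,\mathrm{d}t$, and only afterwards deduces uniform convergence on compacts and hence equicontinuity. Positive definiteness reduces equicontinuity everywhere to equicontinuity at $0$, but the latter still requires a genuine argument, and your presentation risks circularity by using equicontinuity to prove tightness while equicontinuity is usually derived from tightness. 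In the nuclear setting the real work is to prove a uniform Minlos--Sazonov-type estimate, which is precisely what \cite{Fernique:1968} and \cite{BOY2017} supply. Since you explicitly defer the technical heart to those references, your outline is a fair road map, but the equicontinuity step as written would not stand on its own without the details those papers provide.
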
 

For $f\in \mathcal S$ we define $\widehat f\in \mathcal S$ by $$\widehat f(\theta)=\frac1{(2\pi)^{d/2}}\int_{\R^d}\e^{-\iota\la x, \theta \ra} f(x)\De x.$$ 
Let us define an operator $(-\Delta)^{-1} : \mathcal S\to L^2(\R^d)$ as follows \cite[Section~1.2.2]{Adams/Hedberg:2012}:
$$(-\Delta)^{-1}f(x):= \frac1{(2\pi)^{d/2}}\int_{\R^d} \e^{\iota \la x, \xi\ra } \|\xi\|^{-2} \widehat f(\xi) \De\xi.$$
We use now the operator $(-\Delta)^{-1}$ to define the limiting field $\psi$. It is the fractional Gaussian field of 
parameter $s:=2$ described in \citet[Section~3.1]{LSSW}, to 
which we refer for a proof of 
the following fact, relying on the Bochner--Minlos theorem.
\begin{lemma}\label{lemma:psi}
There exists a generalized random field $\psi$ on $\mathcal S^*$ whose characteristic functional 
$\mathcal L_\psi$ is given by
	$$\mathcal L_{\psi} (f)= \exp\left(- \frac12\lVert (-\Delta)^{-1}f\rVert^2_{L^2(\R^d)}\right),\quad f\in \mathcal S.$$
\end{lemma}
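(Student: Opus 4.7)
The plan is to construct $\psi$ by a direct application of the Bochner--Minlos theorem, verifying that $\mathcal L_\psi$ is (a) well-defined on $\mathcal S$, (b) positive definite, (c) continuous at $0$, and (d) satisfies $\mathcal L_\psi(0)=1$. Item (d) is trivial and (b) is the standard fact that exponentials of $-\tfrac12$ times a positive semi-definite quadratic form on a vector space are always positive definite: the bilinear form $Q(f,g):=\langle (-\Delta)^{-1}f,\,(-\Delta)^{-1}g\rangle_{L^2(\R^d)}$ is positive semi-definite (being the pullback of the $L^2$ inner product by a linear map), hence $\mathcal L_\psi(f)=\exp(-\tfrac12 Q(f,f))$ is positive definite in the sense of Bochner.

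The real content is in (a) and (c). For (a), I would use Plancherel's identity in the form
\begin{equation*}
\|(-\Delta)^{-1}f\|_{L^2(\R^d)}^2 = \int_{\R^d}\|\xi\|^{-4}|\widehat f(\xi)|^2\,\De\xi.
\end{equation*}
Splitting the integral into the regions $\{\|\xi\|\le 1\}$ and $\{\|\xi\|>1\}$, the tail is controlled by the rapid decay of $\widehat f\in\mathcal S$; the behaviour near the origin is controlled because $\int_{\|\xi\|\le 1}\|\xi\|^{-4}\De\xi<+\infty$ precisely when $d-4>0$, i.e., when $d\ge 5$, which is our standing assumption in this section. This is the point where the dimension restriction enters, and is arguably the key conceptual step: in low dimensions the functional would not even be defined on all of $\mathcal S$, reflecting the fact that no infinite-volume limit exists there.

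For (c), continuity at $0$, I would use that the Fourier transform is a homeomorphism of $\mathcal S$, so $f_n\to 0$ in $\mathcal S$ implies $\widehat{f_n}\to 0$ in $\mathcal S$. Then I would estimate
\begin{equation*}
\|(-\Delta)^{-1}f_n\|_{L^2}^2 \le \Big(\sup_{\R^d}|\widehat{f_n}|\Big)^2\int_{\|\xi\|\le 1}\|\xi\|^{-4}\De\xi + \int_{\|\xi\|>1}\|\xi\|^{-4}|\widehat{f_n}(\xi)|^2\De\xi,
\end{equation*}
and bound the second integral using a Schwartz seminorm (e.g.\ $\sup_\xi(1+\|\xi\|)^{d}|\widehat{f_n}(\xi)|$) to make it vanish as $n\to\infty$; the first integral vanishes by uniform convergence to zero. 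Then $\mathcal L_\psi(f_n)\to 1=\mathcal L_\psi(0)$ by continuity of $\exp$, proving continuity at $0$; since $\mathcal L_\psi$ is expressed in terms of a seminorm it is automatically continuous everywhere on $\mathcal S$, but continuity at $0$ is all Bochner--Minlos needs.

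Having verified the hypotheses, Bochner--Minlos (as invoked in the paragraph preceding the lemma) produces a probability measure $\prob_\psi$ on $(\mathcal S^*,\mathcal B(\mathcal S^*))$ whose Fourier transform is $\mathcal L_\psi$, and $\psi$ is defined as the canonical generalized field on this probability space. The only subtle point in the whole argument is the integrability condition near $\xi=0$, which is exactly why the construction succeeds in $d\ge 5$; everything else is bookkeeping on Schwartz seminorms.
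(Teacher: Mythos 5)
Your proof is correct, and it takes the route the paper itself intends: the paper does not spell out a proof of Lemma~\ref{lemma:psi} but rather cites \citet[Section~3.1]{LSSW} and notes that the argument relies on the Bochner--Minlos theorem, which is exactly the verification you carry out. Your computation via Plancherel, $\lVert(-\Delta)^{-1}f\rVert_{L^2}^2=\int_{\R^d}\lVert\xi\rVert^{-4}\lvert\widehat f(\xi)\rvert^2\,\De\xi$, correctly isolates $d\ge5$ as the dimension range where the quadratic form is finite on all of $\mathcal S$, and the positivity and continuity checks are standard and sound.
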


Consider $(\vr_x)_{x\in\Z^d}$ to be the membrane model in $d\ge 5$. We define $$\psi_N(x):= \kappa N^{\frac{d-4}{2}} \vr_{Nx}, \quad x\in \frac1N \Z^d.$$
For $f\in \mathcal S$ we define 
\begin{equation}\label{def:fieldS}
\left( \psi_N, f\right): = N^{-d} \sum_{x\in \frac1N \Z^d} \psi_N(x) f(x).
\end{equation}
The above definition makes sense since, using Mill's ratio and the uniform boundedness of $G(\cdot,\,\cdot)$, one can show that, as $\|x\|\to\infty$,
\[
 |\psi_N(x)|=O\big(N^{\frac{d-4}{2}}\sqrt{\log(1+\|x\|)}\big)\quad a.s.
\]
via a Borell-Cantelli argument. This justifies~\eqref{def:fieldS} using the fast decay of $f$ at infinity. Also it follows that $\psi_N \in \mathcal S^*$ and the characteristic functional of $\psi_N$ is given by $$\mathcal L_{\psi_N}(f):= \exp(- \var\left(\psi_N,f\right)/2).$$

The following Theorem shows that the field $\psi_N$ constructed above converges to $\psi$ defined in Lemma~\ref{lemma:psi}.
\begin{theorem}[Scaling limit in $d\ge 5$]\label{thm:big_d}
Let $d\ge 5$ and $\psi_N$ be the field on $\mathcal S^*$ defined by~\eqref{def:fieldS}. Then  $\psi_N\overset{d}\to \psi$ in the strong topology where $\psi$ is defined in Lemma~\ref{lemma:psi}.
\end{theorem}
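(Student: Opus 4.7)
The plan is to reduce the problem via Fact~\ref{equiv:fact} to the pointwise convergence of characteristic functionals $\mathcal{L}_{\psi_N}(f)\to \mathcal{L}_\psi(f)$ for every $f\in\mathcal{S}$. Since $\psi_N$ is a centered Gaussian by~\eqref{def:fieldS}, one has $\mathcal{L}_{\psi_N}(f)=\exp(-\tfrac12\var(\psi_N,f))$, while Lemma~\ref{lemma:psi} gives $\mathcal L_\psi(f)=\exp(-\tfrac12\|(-\Delta)^{-1}f\|^2_{L^2(\R^d)})$. By Plancherel applied to the continuum operator, the task reduces to proving
\[
\lim_{N\to\infty}\var(\psi_N,f)=\int_{\R^d}\frac{|\widehat f(\xi)|^2}{\|\xi\|^4}\De\xi.
\]

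The main analytic step would be a Fourier computation in the spirit indicated in the introduction. Translation invariance of the infinite-volume covariance $G$ combined with~\eqref{def:fieldS} yields
\[
\var(\psi_N,f)=\frac{\kappa^2}{N^{d+4}}\sum_{u,v\in\Z^d}G(u-v,0)\,f(u/N)f(v/N).
\]
The discrete bilaplacian $\Delta_1^2$ has Fourier symbol $\mu(\theta)^2$ on $[-\pi,\pi]^d$ with $\mu(\theta):=\frac1d\sum_{i=1}^d(1-\cos\theta_i)$, so that $G(\cdot,0)$ has symbol $1/\mu(\theta)^2$. Plancherel on $\Z^d$ and the rescaling $\theta=\xi/N$ transform the double sum into
\[
\var(\psi_N,f)=\frac{\kappa^2}{(2\pi)^d N^{2d+4}}\int_{[-\pi N,\pi N]^d}\frac{|\widehat f_N(\xi/N)|^2}{\mu(\xi/N)^2}\De\xi,
\]
where $\widehat f_N(\theta):=\sum_{u\in\Z^d}f(u/N)e^{-\iota\la u,\theta\ra}$. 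I would then invoke Poisson summation, writing
\[
N^{-d}\widehat f_N(\xi/N)=(2\pi)^{d/2}\sum_{k\in\Z^d}\widehat f(\xi+2\pi N k),
\]
and combine it with the local expansion $\mu(\xi/N)^2=\kappa^2\|\xi\|^4 N^{-4}+O(\|\xi\|^6 N^{-6})$. Pointwise in $\xi$ the $k=0$ term will dominate while the aliased copies vanish by the Schwartz decay of $\widehat f$, so that after cancellation of all prefactors the integrand converges to $|\widehat f(\xi)|^2/\|\xi\|^4$.

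The hard part will be justifying this passage to the limit via dominated convergence, uniformly in $N$, across three distinct regimes. Near $\xi=0$ the singular factor $\|\xi\|^{-4}$ is integrable precisely because $d\ge 5$, and boundedness of $\widehat f$ takes care of the numerator. In the bulk of the Brillouin zone $[-\pi N,\pi N]^d$ the two-sided bound $\mu(\theta)\asymp\|\theta\|^2$ on $[-\pi,\pi]^d$ upgrades to $\mu(\xi/N)^2\gtrsim N^{-4}\|\xi\|^4$, and the Schwartz decay of $\widehat f$ then produces an integrable majorant. Near the boundary of the Brillouin zone and for the $k\neq 0$ aliased copies, the rapid decay of $\widehat f\in\mathcal S$ on scales of order $N$ dominates any polynomial growth of $1/\mu$. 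Once such a uniform majorant is assembled, Lebesgue's theorem yields the convergence of variances and hence the theorem.
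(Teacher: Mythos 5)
Your proposal follows essentially the same route as the paper: reduction via Fact~\ref{equiv:fact} to convergence of characteristic functionals, hence of variances; writing the variance as a Fourier integral using the explicit symbol $\mu(\theta)^{-2}$ of the infinite-volume covariance; rescaling $\theta=\xi/N$; Poisson summation to compare the lattice sum with $\widehat f$; and finally controlling the error in the substitution $\mu(\xi/N)^{-2}\rightsquigarrow \kappa^{-2}N^{4}\|\xi\|^{-4}$. The paper organises the final step somewhat differently, splitting it into two explicit error claims (Claims~\ref{claim:error1} and~\ref{claim:RS}) and invoking the two-sided inequality~\eqref{eq:sine's_bound} from~\cite{CHR} and the quantitative Poisson-summation bound~\eqref{eq:decayRS}, rather than assembling a single dominating function; but these are two phrasings of the same analysis, and the regime split you describe (near the origin using $d\ge 5$, the bulk via $\mu(\theta)\asymp\|\theta\|^2$, near the Brillouin boundary and aliased copies via Schwartz decay) matches the bookkeeping hidden inside those claims.
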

\subsection{Proof of the scaling limit (Theorem~\ref{thm:big_d})}
The proof of our last Theorem relies on the result recalled in Fact~\ref{equiv:fact}, therefore unlike the two previous Theorems 
it is not divided into tightness and finite dimensional convergence. The argument is based on Fourier analysis, and will be a consequence of 
two claims which we will show after the main proof.
\begin{proof}[Proof of Theorem~\ref{thm:big_d}]We first show that for any $f\in \mathcal S(\mathbb{R}^d)$,
	$$\E\left[\left( \psi_N, f\right)^2\right]\rightarrow \|(-\Delta)^{-1}f\|^2_{L^2(\R^d)}.$$
By our definition we have for $f,\,g\in\mathcal S$ 
$$\mathrm{Cov}(\left( \psi_N, f\right),\left( \psi_N, g\right))=\kappa^2N^{-(d+4)}\sum_{x,y\in \frac1N\Z^d } G(0, N(y-x))f(x) g(y).$$
Hence
	\begin{align*}
	\E\left[ \left( \psi_N,  f\right)^2 \right]=\kappa^2N^{-(d+4)} \sum_{x,y\in \frac1N \Z^d} G(0, N(y-x))f(x) f(y).
	\end{align*}
We deduce from the Fourier inversion formula, in the same fashion of the proofs of \citet[Lemmas~1.2.2,~1.2.3]{Kurt_thesis}, that
	$$G(0,x)= \frac1{(2\pi)^d} \int_{[-\pi, \pi]^d} \left(\mu(\theta)\right)^{-2}\e^{-\iota\la x, \theta\ra} \De \theta$$	
where $\mu(\theta)=\frac1{d}\sum_{i=1}^d (1-\cos(\theta_i))=\frac2d\sum_{i=1}^d \sin^2(\frac{\theta_i}{2})$. Hence we have
	\begin{align}
	\E\left[ \left( \psi_N, f\right)^2 \right]&=\frac{ \kappa^2N^{-(d+4)}}{(2\pi)^d} \sum_{x,y\in  \frac1N \Z^d}\int_{[-\pi, \pi]^d} \left(\mu(\theta)\right)^{-2} \e^{-\iota\la N(y-x), \theta\ra} f(x) f(y) \De\theta\nonumber\\
	&=\frac{ \kappa^2N^{-(d+4)}}{(2\pi)^d} \sum_{x,y\in  \frac1N \Z^d} \int_{[-\pi, \pi]^d} \left(\mu(\theta)\right)^{-2} \e^{-\iota\la (y-x), N\theta\ra} f(x) f(y) \De\theta\nonumber\\
	&=\frac{ \kappa^2N^{-4}}{(2\pi)^d}\int_{[-N\pi, N\pi]^d}  \left(\mu\left(\frac{\theta}{N}\right)\right)^{-2} \left|N^{-d} \sum_{x\in  \frac1N \Z^d} \e^{-\iota\la x, \theta \ra} f(x)\right|^2\De \theta.
	\end{align}
	We have used in the above Fubini's theorem, justified by the following bound \cite[Lemma~7]{CHR}:
	there exists $C>0$ such that for all $N\in \N$ and $w\in [-N\pi/2, N\pi/2]^d\setminus \{0\}$  we have
	\begin{align}
	\frac{1}{\| w\|^4}\le N^{-4}\left(\sum_{i=1}^d \sin^2\left(\frac{w_i}{N}\right)\right)^{-2}\le \left(\frac{1}{\|w\|^2}+\frac{C}{N^{2}}\right)^2\label{eq:sine's_bound}.
	\end{align}
We make two claims which will prove the convergence of variance.
\begin{claim}\label{claim:error1}
	\begin{align*}
	\lim_{N\to+\infty} &\left|\frac{ \kappa^2N^{-4}}{(2\pi)^d}\int_{[-N\pi, N\pi]^d}  \left(\mu\left(\frac{\theta}{N}\right)\right)^{-2} 
	\Bigg|N^{-d} \sum_{x\in  \frac1N \Z^d} \e^{-\iota\la x, \theta \ra} f(x)\Bigg|^2\De \theta\right.\\
	&\qquad  \qquad \qquad \left. - \frac{1}{(2\pi)^d}\int_{[-N\pi, N\pi]^d} \|\theta\|^{-4} \Bigg|N^{-d} \sum_{x\in  \frac1N \Z^d} \e^{-\iota\la x, \theta \ra} f(x)\right|^2\De \theta\Bigg|=0.
	\end{align*}
	\end{claim}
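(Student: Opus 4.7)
The plan is to merge the two integrals in the statement into a single integral over $[-N\pi,N\pi]^d$, bound the difference of the ``Fourier symbols'' $\kappa^2N^{-4}\mu(\theta/N)^{-2} - \|\theta\|^{-4}$ pointwise, and then integrate this error against $|\widehat f_N(\theta)|^2$, where
\[
\widehat f_N(\theta) := N^{-d}\sum_{x\in N^{-1}\Z^d}\e^{-\iota\la x,\theta\ra}f(x).
\]
Sufficient decay of $\widehat f_N$, uniform in $N$, will let the $N^{-2}$ and $N^{-4}$ prefactors below drive the error to zero.

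First I would derive a pointwise symbol estimate. Applying~\eqref{eq:sine's_bound} with $w_i=\theta_i/2$ (so that $w_i/N=\theta_i/(2N)$ and $\mu(\theta/N)=(2/d)\sum_i\sin^2(\theta_i/(2N))$), and using $\kappa=(2d)^{-1}$, a direct algebraic manipulation yields
\[
0\le \kappa^2 N^{-4}\,\mu(\theta/N)^{-2} - \|\theta\|^{-4}\le \frac{C_1}{\|\theta\|^2 N^2} + \frac{C_2}{N^4}
\]
uniformly for $\theta\in[-N\pi,N\pi]^d\setminus\{0\}$. Substituting this into the integrand reduces the claim to showing that $N^{-2}\int\|\theta\|^{-2}|\widehat f_N|^2\,\De\theta$ and $N^{-4}\int|\widehat f_N|^2\,\De\theta$ both tend to zero.

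The second integral is immediate: using orthogonality of complex exponentials over the box (the usual computation: non-diagonal terms give $\int_{[-N\pi,N\pi]^d}\e^{-\iota\la z,\theta\ra}\De\theta=0$ for $z\in N^{-1}\Z^d\setminus\{0\}$), one gets $\int_{[-N\pi,N\pi]^d}|\widehat f_N|^2\De\theta=(2\pi)^dN^{-d}\sum_{x\in N^{-1}\Z^d}f(x)^2$, which is a Riemann sum converging to $(2\pi)^d\|f\|_{L^2}^2$ and hence bounded. For the first integral, I would invoke the Poisson summation formula applied to $y\mapsto f(y)\e^{-\iota\la y,\theta\ra}$ to obtain
\[
\widehat f_N(\theta) = (2\pi)^{d/2}\sum_{k\in\Z^d}\widehat f(\theta + 2\pi N k),\qquad \theta\in[-N\pi,N\pi]^d.
\]
Since $f\in\mathcal S$, so is $\widehat f$, and the $k=0$ term dominates with $|\widehat f(\theta)|\le C_M(1+\|\theta\|)^{-M}$ for any $M$, while the $k\ne 0$ aliasing terms lie in $\{\|\theta+2\pi Nk\|\ge N\pi\}$ and contribute at most $O(N^{-M})$. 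Hence $|\widehat f_N(\theta)|\le C_M(1+\|\theta\|)^{-M}$ uniformly in $N$, and because $d\ge 5$ the weight $\|\theta\|^{-2}$ is locally integrable at the origin; choosing $M$ large enough makes $\int\|\theta\|^{-2}|\widehat f_N|^2\De\theta$ uniformly bounded in $N$, so the $N^{-2}$ prefactor finishes the argument.

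The main technical obstacle is obtaining uniform-in-$N$ control of $\widehat f_N(\theta)$, both near $\theta=0$ (where $\|\theta\|^{-2}$ is singular and requires $d\ge 5$ to be locally integrable) and at infinity inside the growing box. The Poisson summation identification with periodised copies of $\widehat f$ is the correct tool: the aliasing translates live outside $[-N\pi,N\pi]^d$ and decay at arbitrary polynomial rate thanks to $f\in\mathcal S$, which is essentially the only place where the Schwartz hypothesis on $f$ is used in a nontrivial way.
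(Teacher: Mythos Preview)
Your proof is correct and follows essentially the same approach as the paper: both arguments first use the pointwise symbol bound~\eqref{eq:sine's_bound} (with $w_i=\theta_i/2$) to control $\kappa^2 N^{-4}\mu(\theta/N)^{-2}-\|\theta\|^{-4}$ by $C\|\theta\|^{-2}N^{-2}+CN^{-4}$, and then both rely on Poisson summation to control the discrete Fourier sum $\widehat f_N$. The only cosmetic differences are that the paper writes the Poisson summation consequence as the difference estimate $|(2\pi)^{-d/2}\widehat f_N(\theta)-\widehat f(\theta)|\le CN^{-s}$ (its Lemma preceding the claims) and then adds and subtracts $\widehat f$ inside the integral, whereas you extract the equivalent uniform decay bound $|\widehat f_N(\theta)|\le C_M(1+\|\theta\|)^{-M}$ directly; and for the $N^{-4}$ term you invoke Parseval on the box instead of the add--subtract argument, which is a slight shortcut.
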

	
Next we claim the convergence of the following term:
\begin{claim}\label{claim:RS}
	$$\lim_{N\to+\infty}\frac1{(2\pi)^{d}}\int_{[-N\pi, N\pi]^d}\|\theta\|^{-4}\Bigg|N^{-d} \sum_{x\in  \frac1N \Z^d} \e^{-\iota\la x, \theta \ra} f(x)\Bigg|^2\De \theta=\|(-\Delta)^{-1}f\|^2_{L^2(\R^d)}.$$
	\end{claim}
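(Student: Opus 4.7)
The natural approach is to recognize the inner sum as a discrete Fourier transform and to rewrite it via the Poisson summation formula in a way that isolates the main term from the aliasing contributions. Setting
\[
S_N(\theta):=N^{-d}\sum_{x\in \frac{1}{N}\Z^d}\e^{-\iota\la x,\theta\ra}f(x),
\]
I apply Poisson summation to the Schwartz function $g_\theta(x):=f(x)\e^{-\iota\la x,\theta\ra}$ on the rescaled lattice $\frac{1}{N}\Z^d$. Since the modulation $\e^{-\iota\la \cdot,\theta\ra}$ shifts the Fourier transform, one obtains
\[
S_N(\theta)=(2\pi)^{d/2}\sum_{k\in\Z^d}\widehat f(\theta+2\pi N k).
\]
Plancherel's theorem together with the definition of $(-\Delta)^{-1}$ via the Fourier transform gives the target identity
\[
\|(-\Delta)^{-1}f\|^2_{L^2(\R^d)}=\int_{\R^d}\|\xi\|^{-4}|\widehat f(\xi)|^2\De\xi,
\]
and this integral is finite because $d\ge 5$ (controlling the singularity at the origin, where $\|\xi\|^{-4}$ is integrable exactly for $d>4$) and because $\widehat f\in\mathcal S$ (controlling decay at infinity).

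Next, I expand the square and split the resulting sum into the diagonal contribution $k=0$ and the aliasing contributions $k\neq 0$. The main term
\[
\int_{[-N\pi,N\pi]^d}\|\theta\|^{-4}|\widehat f(\theta)|^2\De\theta
\]
converges to $\int_{\R^d}\|\theta\|^{-4}|\widehat f(\theta)|^2\De\theta$ by monotone convergence (the integrand is nonnegative and the domains increase to $\R^d$), which by the computation above is $\|(-\Delta)^{-1}f\|^2_{L^2(\R^d)}$, giving precisely the desired limit modulo the factor $(2\pi)^d$ that is cancelled by the prefactor $1/(2\pi)^d$ in the claim.

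It remains to show that the aliasing contributions vanish in the limit. For $\theta\in[-N\pi,N\pi]^d$ and $k\in\Z^d\setminus\{0\}$, at least one component of $\theta+2\pi N k$ has modulus at least $N\pi$, so $\|\theta+2\pi Nk\|\ge N\pi |k|_\infty /2$ for $N$ large; combined with the Schwartz decay $|\widehat f(\eta)|\le C_M(1+\|\eta\|)^{-M}$ for arbitrarily large $M$, the contribution
\[
\int_{[-N\pi,N\pi]^d}\|\theta\|^{-4}\Bigl|\sum_{k\neq 0}\widehat f(\theta+2\pi N k)\Bigr|^2\De\theta
\]
as well as the cross terms (bounded via Cauchy--Schwarz) can be made arbitrarily small. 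The delicate part is handling the singular weight $\|\theta\|^{-4}$: near $\theta=0$ it is only integrable thanks to $d\ge 5$, but fortunately there the shifted arguments $\theta+2\pi Nk$ are of order $N$, so the Schwartz decay of $\widehat f$ wins against the local singularity at the origin. Away from the origin $\|\theta\|^{-4}$ is bounded by a constant and the decay of $\widehat f$ handles the rest. This is the main obstacle and has to be carried out by splitting the domain into $\{\|\theta\|\le 1\}$ and its complement and applying the appropriate Schwartz semi-norm bound of $\widehat f$ in each region.
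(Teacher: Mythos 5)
Your proposal is correct and takes essentially the same route as the paper: the paper first proves a standalone lemma via Poisson summation giving the uniform bound $|(2\pi)^{-d/2}N^{-d}\sum_x e^{-\iota\la x/N,\theta\ra}f(x/N)-\widehat f(\theta)|\le CN^{-s}$, then applies it to the difference of the two integrals via the factorization $|a^2-b^2|=|a+b|\,|a-b|$, arriving at an $O(N^{d-4-s})$ error. Your inlined Poisson expansion and separation into $k=0$ (diagonal) and $k\neq 0$ (aliasing) terms is a minor reorganization of the same argument, and your treatment of the $\|\theta\|^{-4}$ singularity near the origin (integrable for $d\geq 5$, with the shifted frequencies $\theta+2\pi Nk$ of order $N$ there) correctly identifies and resolves the one delicate point.
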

	Claims~\ref{claim:error1}-\ref{claim:RS} entail that
	$$\lim_{N\to\infty}\mathcal L_{\psi_N}(f)=\exp\left(-\frac12 
	\|(-\Delta)^{-1}f\|^2_{L^2(\R^d)}\right).$$
	Thus we have for all $f\in \mathcal S$
	$$\mathcal L_{\psi_N}(f)\to \mathcal L_{\psi}(f).$$
	Hence the conclusion follows from Fact \ref{equiv:fact}. 
	\end{proof}
	
To prove the above two claims we use crucially the following estimate for approximating Riemann sums for Schwartz functions. 
Since we could not find a reference we provide a short proof of the following fact:
\begin{lemma} For any $N\ge 1$ and $s>0$ we have 
\begin{equation}\label{eq:decayRS}
\left|(2\pi)^{-d/2} N^{-d}\sum_{x\in \Z^d} \e^{-\iota \la \frac{x}{N}, \theta\ra} f\left(\frac{x}{N}\right)-\widehat f(\theta)\right|\le CN^{-s}
\end{equation}
where $C$ may depend on $f$. 
\end{lemma}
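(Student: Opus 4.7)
The plan is to apply the Poisson summation formula to the rescaled function $g_N(y):=f(y/N)\e^{-\iota\la y/N,\theta\ra}$, which lies in $\mathcal S(\R^d)$, and then use the Schwartz-class decay of $\widehat f$ to control the resulting tail.

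First I would compute the Fourier transform of $g_N$. With the substitution $u=y/N$ and the convention of the paper one finds
\[
\widehat{g_N}(\xi)=N^{d}\,\widehat f(\theta+N\xi).
\]
Applying the Poisson summation formula
\[
\sum_{x\in\Z^d}g_N(x)=(2\pi)^{d/2}\sum_{k\in\Z^d}\widehat{g_N}(2\pi k)
\]
and dividing by $N^{d}(2\pi)^{d/2}$ yields the exact identity
\[
(2\pi)^{-d/2}N^{-d}\sum_{x\in\Z^d}\e^{-\iota\la x/N,\theta\ra}f(x/N)=\sum_{k\in\Z^d}\widehat f(\theta+2\pi N k).
\]
Separating the $k=0$ term, the left-hand side of \eqref{eq:decayRS} is therefore equal to $\bigl|\sum_{k\neq 0}\widehat f(\theta+2\pi N k)\bigr|$.

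Second I would estimate this tail using $\widehat f\in\mathcal S$: for every $M>0$ there is $C_M>0$ (depending on $f$) with $|\widehat f(\xi)|\le C_M(1+\|\xi\|)^{-M}$. In the regime $\theta\in[-N\pi,N\pi]^{d}$ relevant to Claims~\ref{claim:error1} and \ref{claim:RS}, a coordinatewise estimate shows that for every $k\in\Z^d\setminus\{0\}$, picking an index $j$ with $|k_j|=\|k\|_\infty\ge 1$,
\[
|\theta_j+2\pi N k_j|\ge 2\pi N|k_j|-N\pi\ge N\pi\,\|k\|_\infty,
\]
so that $\|\theta+2\pi Nk\|\ge N\pi\|k\|_\infty\ge N\pi\|k\|/\sqrt d$. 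Consequently
\[
|\widehat f(\theta+2\pi N k)|\le C_M\bigl(N\pi\|k\|_\infty\bigr)^{-M},\qquad k\ne 0.
\]

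Finally, for any $s>0$ I would choose $M>s+d$, so that $\sum_{k\neq 0}\|k\|_\infty^{-M}<\infty$, obtaining
\[
\Big|\sum_{k\neq 0}\widehat f(\theta+2\pi N k)\Big|\le C_M\,N^{-M}\sum_{k\ne 0}\|k\|_\infty^{-M}\le C\,N^{-s},
\]
which is the desired bound. The only subtle point is the restriction on $\theta$: one needs $\theta$ to stay away from the dual lattice $2\pi N\Z^d\setminus\{0\}$, and for $\theta\in[-N\pi,N\pi]^d$ this is automatic with the quantitative lower bound above. This is not really a serious obstacle but must be tracked, since without such a restriction the Poisson side is dominated by the term corresponding to the nearest lattice point, and no polynomial decay in $N$ is possible.
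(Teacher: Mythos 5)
Your proof is correct and follows essentially the same route as the paper: apply the Poisson summation formula to the rescaled function and then use the rapid decay of $\widehat f\in\mathcal S$ to bound the tail $\sum_{k\neq 0}\widehat f(\theta+2\pi Nk)$, using (implicitly in the paper, explicitly in your writeup) that $\theta\in[-N\pi,N\pi]^d$ so that $\|\theta+2\pi Nk\|_\infty\gtrsim N\|k\|_\infty$. Your lower bound on $\|\theta+2\pi Nk\|_\infty$ is a slightly cleaner packaging of the triangle-inequality estimate the paper uses, but the argument is the same.
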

\begin{proof}
To show the above result we use the Poisson summation formula \cite[Chapter 7]{Stein:Weiss}. 
Let us define $g(x):= (2\pi)^{-d/2}\e^{-\iota \la x, \theta\ra} f\left(x\right)$. Using the Poisson summation formula we get
$$N^{-d} \sum_{x\in \Z^d} g\left(\frac{x}{N}\right)= \sum_{x\in \Z^d} \widehat f(\theta+2\pi xN).$$
Hence we have
\begin{align*}
\left| (2\pi)^{-d/2}N^{-d}\sum_{x\in \Z^d} \e^{-\iota \la \frac{x}{N}, \theta\ra} f\left(\frac{x}{N}\right)-\widehat f(\theta)\right|&\le \sum_{x\neq 0, x\in \Z^d} \lvert\widehat f(\theta+2\pi xN)\rvert\le \sum_{x\neq 0, x\in \Z^d}\frac{C}{\|\theta+2\pi xN\|_{\infty}^s}
\end{align*}
where the last inequality holds for any $s\ge 0$ because $\widehat f\in \mathcal S$. But
\[\|2\pi xN\|_{\infty}\le \|\theta+2\pi xN\|_{\infty}+\|\theta\|_{\infty}\le \|\theta+2\pi xN\|_{\infty}+N\pi\]
and hence, for $s>1$, $\|2\pi xN\|_{\infty}^s\le 2^{s-1} \left( \|\theta+2\pi xN\|_{\infty}^s+ (N\pi)^s\right).$ Thus for any $s\ge d_0>d$, we have
\[\left|(2\pi)^{-d/2} N^{-d}\sum_{x\in \Z^d} \e^{-\iota \la \frac{x}{N}, \theta\ra} f\left(\frac{x}{N}\right)-\widehat f(\theta)\right|\le \sum_{x\neq 0, x\in \Z^d}\frac{C}{(N\pi)^s(2\|x\|_{\infty}^s-1)} \le CN^{-s}.\qedhere\]
where the constant $C$ depends on $d_0$ but not on $s$. Hence the result follows.
\end{proof}
We can now begin with the proof of the two claims.
\begin{proof}[Proof of Claim~\ref{claim:error1}] Recall that $\kappa=1/(2d)$. Using the bound \eqref{eq:sine's_bound} for $w_i=\theta_i/2$ we have 
\begin{align*}
&\left|\frac{ \kappa^2N^{-4}}{(2\pi)^d}\int_{[-N\pi, N\pi]^d} \left(\mu\left(\frac{\theta}{N}\right)\right)^{-2} |N^{-d} \sum_{x\in  \frac1N \Z^d} \e^{-\iota\la x, \theta \ra} f(x)|^2\De \theta\right.\\
&\qquad \qquad \qquad \left. - \frac{1}{(2\pi)^d}\int_{[-N\pi, N\pi]^d} \|\theta\|^{-4} \left|N^{-d} \sum_{x\in  \frac1N \Z^d} \e^{-\iota\la x, \theta \ra} f(x)\right|^2\De \theta\right|\\
&\le \int_{[-N\pi, N\pi]^d} \left(2\|\theta\|^{-2}\frac{C}{N^2}+\frac{C}{N^4}\right) \left|(2\pi)^{-d/2}N^{-d} \sum_{x\in  \frac1N \Z^d} \e^{-\iota\la x, \theta \ra} f(x)\right|^2\De \theta.
\end{align*}
Using $(a+b)^2\le 2(a^2+b^2)$ after adding and subtracting $\widehat f(\theta)$ in the modulus above we have the bound
\begin{align*}
 &\int_{[-N\pi, N\pi]^d} \left(2\|\theta\|^{-2}\frac{C}{N^2}+\frac{C}{N^4}\right)(CN^{-s}+|\widehat{f}(\theta)|)^2\De \theta\\
 &\qquad \le \int_{[-N\pi, N\pi]^d}2\left(2\|\theta\|^{-2}\frac{C}{N^2}+\frac{C}{N^4}\right)(CN^{-2s}+|\widehat{f}(\theta)|^2)\De \theta.
 \end{align*}
 Again the last term amounts to estimating
\begin{align*}
&CN^{-2s-2}O(N^{d-2})+CN^{-2s-4}O(N^d)\\
&+CN^{-2}\int_{[-N\pi, N\pi]^d}\|\theta\|^{-2}|\widehat{f}(\theta)|^2\De\theta+CN^{-4}\int_{[-N\pi, N\pi]^d}|\widehat{f}(\theta)|^2 \De\theta
\end{align*}
which goes to $0$ due to the fact that $f\in \mathcal S$. 
\end{proof}
\begin{proof}[Proof of Claim~\ref{claim:RS}] We have $$\|(-\Delta)^{-1}f\|^2_{L^2(\R^d)}= \int_{\R^d} \|\theta\|^{-4} |\widehat f(\theta)|^2 \De \theta$$ and
\begin{align*}
&\left|\int_{[-N\pi, N\pi]^d}\|\theta\|^{-4}|(2\pi)^{-d/2}N^{-d} \sum_{x\in  \frac1N \Z^d} \e^{-\iota\la x, \theta \ra} f(x)|^2- \int_{\R^d}\|\theta\|^{-4}|\widehat f(\theta)|^2\De \theta\right|\\
&\le \left|\int_{[-N\pi, N\pi]^d}\|\theta\|^{-4}|(2\pi)^{-d/2}N^{-d} \sum_{x\in  \frac1N \Z^d} \e^{-\iota\la x, \theta \ra} f(x)|^2
-\int_{[-N\pi, N\pi]^d}\|\theta\|^{-4}|\widehat f(\theta)|^2\De \theta\right|\\
&+\left|\int_{[-N\pi, N\pi]^d}\|\theta\|^{-4}|\widehat f(\theta)|^2\De \theta-\int_{\R^d}\|\theta\|^{-4}|\widehat f(\theta)|^2\De \theta\right|.
\end{align*}
Clearly the second term goes to zero as $N$ tends to infinity. As for the first term we have the following bound
\begin{align*}
&\left|\int_{[-N\pi, N\pi]^d}\|\theta\|^{-4}|(2\pi)^{-d/2}N^{-d} \sum_{x\in  \frac1N \Z^d} \e^{-\iota\la x, \theta \ra} f(x)|^2-\int_{[-N\pi, N\pi]^d}\|\theta\|^{-4}|\widehat f(\theta)|^2\De \theta\right|\\
&\le \int_{[-N\pi, N\pi]^d}\|\theta\|^{-4}\left| |(2\pi)^{-d/2}N^{-d} \sum_{x\in  \frac1N \Z^d} \e^{-\iota\la x, \theta \ra} f(x)|^2-|\widehat f(\theta)|^2\right|\De \theta\\
&\le (2\|f\|_{L^1}+ C)\int_{[-N\pi, N\pi]^d} \|\theta\|^{-4} \left|(2\pi)^{-d/2} N^{-d}\sum_{x\in \Z^d} \e^{-\iota \la \frac{x}{N}, \theta\ra} f\left(\frac{x}{N}\right)-\widehat f(\theta)\right|\De \theta\\
&=O(N^{d-4-s}).
\end{align*}
where the bound in the second inequality is obtained using the formula $(a^2-b^2)=(a+b)(a-b)$ and ~\eqref{eq:decayRS}. Thus the first term also goes to zero as $N$ tends to infinity.	\end{proof}
\appendix
 \section{Quantitative estimate on the discrete approximation in \texorpdfstring{\cite{thomee}}{}}\label{appendix:T}
This section is devoted to obtaining quantitative estimates on approximation of solutions of PDEs. The building block of our analysis is the paper \cite{thomee}. Let $V$ be any bounded domain in $\R^d$ with  $C^2$ boundary. We denote $L:=\Delta^2$ and consider the following continuum Dirichlet problem:
 	\begin{equation}\label{eqa:continuum}
 	\begin{cases}
 	Lu(x) = f(x),& x\in V\\
 	D^\beta u(x)=0,& \lvert\beta\rvert\leq 1,\,x\in \partial V. 
 	\end{cases}
 	\end{equation} 	                  
%
 	Let $h>0$. We will call the points in $h\mathbb{Z}^d$ the grid points 
 	in $\mathbb{R}^d$. We consider $L_hu:=\Delta^2_hu$ to be the discrete 
 	approximation of $Lu$, where $\Delta_h$ is defined by
 	$$\Delta_hf(x):=\frac1{h^2}\sum_{i=1}^d (f(x+he_i)+f(x-he_i)-2f(x))$$
 	and $f$ is any function on $h\mathbb{Z}^d$. We call such a function a grid function. Thus we have, for $x\in h\mathbb{Z}^d$,
    \begin{align*}
     L_hu(x)&=\frac1{h^2}\sum_{i=1}^d (\Delta_hu(x+he_i)+\Delta_hu(x-he_i)-2\Delta_hu(x))\\
     &=\frac1{h^4}\left[ \sum_{i=1}^d \sum_{j=1}^d\left\lbrace u(x+h(e_i+e_j))+u(x-h(e_i+e_j))+u(x+h(e_i-e_j))\right.\right. \\
    &\left.\left.+u(x-h(e_i-e_j))\right\rbrace-4d\sum_{i=1}^d \left\lbrace u(x+he_i)+u(x-he_i)\right\rbrace +4d^2u(x)\right] .
    \end{align*}

 	Let $V_h$ be the set of grid points in $\overline V$ i.e. 
 	$V_h=\overline V\cap h\mathbb{Z}^d$. We say that $\xi$ is an interior 
 	grid point in $V_h$ or $\xi\in R_h$ if for every 
 	$i, \,j$, the points $\xi\pm h(e_i\pm e_j),\,\xi\pm he_i$ are all in $V_h$. 
 	We denote $B_h$ to be $V_h\setminus R_h$. We will denote by $\mathcal{D}_h$ the set of grid functions vanishing outside $R_h$. 
 	For a grid function $f$ we define $R_hf\in\mathcal{D}_h$ by
 	\begin{equation}
\label{eq:R_h}
 	R_hf(\xi)=\begin{cases}
 	f(\xi) &  \xi\in R_h\\
 	0 & \xi\notin R_h.
 	\end{cases}
 	\end{equation}
 	
 	In \cite{thomee} it is crucially used that the discrete approximation of the elliptic operator is consistent. In our case it is easy to see this using Taylor's expansion.
	 	\begin{lemma}\label{lemma:one}
 		The operator $L_h$ is consistent with the operator $L$, that is, if $W$ is a neighborhood of the origin in $\mathbb{R}^d$ and $u\in C^4(W)$ then $$L_hu(0)=Lu(0)+o(1)\,\,\, \text{as } h\to 0.$$
 	\end{lemma}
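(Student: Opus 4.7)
The plan is to prove the consistency claim by direct Taylor expansion, using the explicit formula for $L_h u(0)$ already written out in the paragraph preceding the lemma. Since the statement is pointwise at the origin and local (it only uses $u$ in a neighborhood $W$ of $0$), for $h$ small enough all the grid points $\pm h(e_i\pm e_j)$ and $\pm h e_i$ involved in $L_h u(0)$ lie inside $W$, so Taylor's theorem applies there.

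Concretely, I would first record the Peano form of Taylor's theorem for $u\in C^4(W)$: for any vector $v\in\R^d$ and $h\to 0$,
\[
u(hv)+u(-hv)=2u(0)+h^2\sum_{k,l}v_kv_l\partial_{kl}u(0)+\frac{h^4}{12}\sum_{k,l,m,n}v_kv_lv_mv_n\partial_{klmn}u(0)+o(h^4),
\]
the odd-order contributions cancelling by symmetry. I would then apply this identity with $v=e_i+e_j$, $v=e_i-e_j$ (for $i\neq j$ or $i=j$, where it reduces to $v=\pm 2e_i$ or $0$) and with $v=e_i$, and insert the three resulting expansions into the displayed formula for $L_h u(0)$ given just before the lemma.

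The main (only) work is a bookkeeping of coefficients. I would organise it by order in $h$. At order $h^0$ the constants $4d^2$, $-4d\cdot 2d$ and $4d^2$ coming from the three sums cancel. At order $h^2$ the contribution of the first sum is $4d\,\Delta u(0)$ (since $\sum_{i\neq j}(\partial_{ii}u+\partial_{jj}u)+2\sum_i\partial_{ii}u$ collapses to $4d\sum_i\partial_{ii}u/...$ after counting), which is exactly cancelled by the $-4d\,h^2\Delta u(0)$ contribution from $-4d\sum_i(u(he_i)+u(-he_i))$. At order $h^4$ the first sum yields $\frac{d+3}{3}\sum_i\partial_i^4 u+\sum_{i\neq j}\partial_i^2\partial_j^2 u$, the second yields $-\frac{d}{3}\sum_i\partial_i^4 u$, and their sum equals $\sum_{i,j}\partial_i^2\partial_j^2 u=\Delta^2 u(0)$. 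After dividing by $h^4$ one therefore obtains
\[
L_h u(0)=\Delta^2 u(0)+o(1)=Lu(0)+o(1).
\]

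The only possible obstacle is purely arithmetical: matching the three contributions at orders $h^0$ and $h^2$ so that they cancel, and matching the $h^4$ contributions to exactly $\Delta^2 u(0)=\sum_{i,j}\partial_i^2\partial_j^2 u(0)$. I would not foresee any analytic difficulty; the Peano remainder $o(h^4)$ requires only $u\in C^4$, which is the hypothesis, and it is legitimate to sum a finite number of such $o(h^4)$ terms. Hence the Lemma reduces to the symbolic identity above.
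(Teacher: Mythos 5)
Your proposal is correct, and it is exactly the argument the paper has in mind: the text preceding the lemma says the consistency of $L_h$ with $L=\Delta^2$ is "easy to see using Taylor's expansion," and no written-out proof is given, so your Taylor/Peano bookkeeping is precisely the computation the authors leave implicit. I checked the coefficients: the $h^0$ terms cancel as $4d^2-8d^2+4d^2=0$, the $h^2$ terms cancel as $4d\,\Delta u(0)-4d\,\Delta u(0)=0$, and at order $h^4$ one indeed gets $\frac{d}{3}\sum_i\partial_i^4u+\Delta^2u-\frac{d}{3}\sum_i\partial_i^4u=\Delta^2u(0)$, matching your $\frac{d+3}{3}\sum_i\partial_i^4u+\sum_{i\neq j}\partial_i^2\partial_j^2u-\frac{d}{3}\sum_i\partial_i^4u$. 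Your use of the Peano remainder is also the right choice, since it needs only $u\in C^4$ and a finite sum of $o(h^4)$ errors is still $o(h^4)$.
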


 	We will divide $R_h$ further into $R^*_h$ and $B^*_h$ where $R^*_h$ is the set 
 	of $\xi$ in $R_h$ such that for every $i,\, j$, 
 	the points $\xi\pm h(e_i\pm e_j),\,\xi\pm he_i$ are all 
 	in $R_h$ and $B^*_h$ is the set of remaining points in $R_h$. 
 	Thus we have $$V_h=B_h\cup R_h=B_h\cup B^*_h\cup R^*_h.$$
  We say that the domain $V$ has property $\mathcal{B}^*_2$ if 
  	there is a natural number $K$ such that for all sufficiently small $h$, 
  	the following is valid: consider for any $\xi\in B^*_h$ all half-rays through $\xi$. 
  	At least one of them contains within the distance $Kh$ from $\xi$ two consecutive 
  	grid-points in $B_h$. 
  	
  	The following Proposition shows that if the boundary of the domain is regular enough then the property $\mathcal{B}^*_2$ is true. Namely, recall the  uniform exterior ball condition (UEBC) for a domain $V$, which states that there exists $\delta>0$ such that for any $z\in\partial V$ there is a ball $B_\delta(c)$ of radius $\delta$ with center at some point $c$ satisfying $\overline{B_\delta(c)}\cap \overline V = \{z\}$ \cite[page 27]{GT79}. We show that the UEBC is a sufficient condition for $\mathcal B_2^*$ to hold. In particular, any domain with $C^2$ boundary satisfy the UEBC and hence possesses $\mathcal B_2^*$.
	
	\begin{proposition} \label{prop:B2star}
If a bounded domain $V$ satisfies the UEBC then the property $\mathcal B^*_2$ holds.
\end{proposition}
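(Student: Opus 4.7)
The plan is to exploit the uniform exterior ball condition in order to find, for any $\xi\in B^*_h$, a lattice half-ray $k\mapsto p_k:=\xi+khw$ with $w=\pm e_i$ that stays in $\overline V$ for the first three steps but leaves $\overline V$ within a uniformly bounded number of further steps; the two grid points immediately before the exit then supply the required pair in $B_h$. As a first observation, unfolding $\xi\in B^*_h\subset R_h$ gives a stencil neighbour $\eta=\xi+\alpha\in B_h$ with $\|\alpha\|\le 2h$, which in turn has a stencil neighbour $\zeta=\eta+\beta\notin V_h$ with $\|\beta\|\le 2h$. Since $\zeta\notin\overline V$ while $\xi\in\overline V$, the segment $[\xi,\zeta]$ crosses $\partial V$ at a point of distance at most $4h$ from $\xi$. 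Taking $z\in\partial V$ to be the closest boundary point to $\xi$ we obtain $\|\xi-z\|\le 4h$ and, exploiting the smoothness of $\partial V$, $\xi-z=-|\xi-z|n$ with $n$ the outward unit normal at $z$; UEBC then furnishes $c=z+\delta n$ with $\overline{B_\delta(c)}\cap\overline V=\{z\}$ and $B_\delta(c)\subset V^c$.

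Next, since $\|n\|_\infty\ge 1/\sqrt d$, pick a sign and an index $i$ so that $w:=\pm e_i$ satisfies $w\cdot n\ge 1/\sqrt d$. Writing $u:=\xi-z=-|u|n$, a direct expansion gives
\[
|\xi+sw-c|^2-\delta^2=s^2-2s(n\cdot w)(\delta+|u|)+|u|(|u|+2\delta).
\]
For $h$ sufficiently small (in terms of $\delta,d$ only) the discriminant is positive and the smaller root admits
\[
s_-=\frac{|u|(|u|+2\delta)}{(n\cdot w)(\delta+|u|)+\sqrt{(n\cdot w)^2(\delta+|u|)^2-|u|(|u|+2\delta)}}\le Ch
\]
for some $C=C(\delta,d)$, since the numerator is $O(h\delta)$ while the denominator is bounded below by $\delta/\sqrt d$. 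Consequently $p_k$ enters $B_\delta(c)\subset V^c$ as soon as $kh>s_-$, so the first exit index satisfies $k^*\le K:=\lceil C\rceil+1$ uniformly in $h$.

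To conclude, observe that, since $\xi\in R_h$ and both $he_i$ and $2he_i$ are offsets of the biharmonic stencil, the points $p_1,p_2$ lie in $V_h$, hence $k^*\ge 3$. By the minimality of $k^*$ the consecutive grid points $p_{k^*-2}$ and $p_{k^*-1}$ belong to $V_h$, and both share the same stencil neighbour $p_{k^*}\notin\overline V$ (at offsets $2hw$ and $hw$, both in the stencil). Therefore $p_{k^*-2},p_{k^*-1}\in B_h$ are consecutive along the half-ray and lie within distance $Kh$ of $\xi$, establishing $\mathcal B^*_2$ with constant $K$. The main technical obstacle is precisely the uniform bound $k^*\le K$: it requires $h$ small enough that the $O(h)$ boundary-proximity of $\xi$ does not compete with the curvature scale $\delta$ of the exterior ball, and it hinges on the bound $\|n\|_\infty\ge 1/\sqrt d$ which always provides a coordinate axis making a definite angle with $n$. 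The clean extraction of two consecutive $B_h$-points in the last step is then essentially automatic, thanks to the inclusion of the offsets $\pm 2he_i$ in the fourth-order stencil, a feature that has no analogue for the discrete Laplacian.
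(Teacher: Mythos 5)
Your proof is correct, and it takes a genuinely different and arguably cleaner route than the paper's. The paper proceeds by an exhaustive combinatorial case analysis on the local structure of the stencil around $\xi\in B_h^*$ (which of the first-neighbour, axial second-neighbour, or diagonal stencil points lies in $B_h$, and so on), invoking the UEBC repeatedly to rule out the configurations where no consecutive pair in $B_h$ is visible. Your argument instead works directly with the exterior ball: locate a boundary point $z$ at distance $O(h)$ from $\xi$ via the two-step stencil chain, pick a coordinate direction $w=\pm e_i$ with $w\cdot n\ge 1/\sqrt d$, and show by an explicit quadratic computation that the lattice ray $p_k=\xi+khw$ enters $B_\delta(c)$ (hence leaves $\overline V$) at a uniformly bounded index $k^*\le K(\delta,d)$. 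The two points $p_{k^*-1},p_{k^*-2}$ immediately preceding the exit are then automatically in $B_h$, since both have $p_{k^*}\notin V_h$ in their stencils (at offsets $hw$ and $2hw$ respectively) — you correctly flag that the presence of $\pm 2he_i$ in the fourth-order stencil is what makes this last step work, with no analogue for the Laplacian. This buys you a short conceptual proof with an explicit $K$; the paper's case analysis is longer but perhaps more robust to modifications of the stencil. A couple of points are worth sharpening in a final write-up: the collinearity $\xi-z=-|\xi-z|n$ is really the statement that the inner ball $B_{|u|}(\xi)\subset V$ and the exterior ball $B_\delta(c)$ are externally tangent at $z$ (which forces $\xi,z,c$ collinear) — this is true when $z$ is chosen as the nearest boundary point, but it deserves a sentence; alternatively, your quadratic computation goes through without collinearity, using only $|z-\xi|\le 4h$ and $\xi\notin B_\delta(c)$ to bound the constant term, which may be the smoother path. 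You should also note explicitly that $s_+-s_-$ is bounded below by a constant depending only on $\delta,d$, so that for $h$ small the open interval $(s_-,s_+)$ contains a lattice multiple of $h$ — otherwise the existence of $k^*\le K$ is not quite immediate from $s_-\le Ch$ alone.
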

Since the proof of this result is purely geometric and combinatorial in nature we discuss it in Appendix~\ref{appendix:B}. We would like to remark that property $\mathcal B_2^*$ is a crucial requirement in the proof of Theorem~\ref{thm:Thomee4.2}. In fact, it allows us to use Thom\'ee's result \cite[Lemma~3.4]{thomee} which compares the standard discrete Sobolev norm with a modified Sobolev norm weighted on boundary points.  
 	
 	%
%
 	We now define the finite difference analogue of the Dirichlet's problem \eqref{eqa:continuum}. For given $h$, we look for a function $u(\xi)$ defined on $V_h$ such that 
 	\begin{align}
 	L_hu_h(\xi)=f(\xi), \quad \xi\in R_h \label{eq:discrete}
 	\end{align} 
and
 	\begin{align}
 	u_h(\xi)=0 ,\quad\xi\in B_h. \label{eq:discrete boundary}
 	\end{align}
It follows from Lemma \ref{lemma:one} and Theorem 5.1 of \cite{thomee} that the finite difference Dirichlet problem \eqref{eq:discrete} and \eqref{eq:discrete boundary} has exactly one solution for arbitrary $f$.
%
Recall also the norm $\|f\|_{h,\,grid}^2:=h^d\sum_{\xi\in h\Z^d}f(\xi)^2.$ Before we prove the approximation theorem, 
 	let us cite two results from~\cite{thomee} (stated, in the original article, in a slightly more general way).
 	\begin{lemma}[{\citet[Lemma~3.1]{thomee}}]\label{lem:Thomee3.1}
 	 There are constants $C>0$ independent of $f$ and $h$ such that
 	 \[
 	  \|f\|_{h,\,grid}\le C\|D_j f\|_{h,\,grid},\quad j=1,\,\ldots,\,d
 	 \]
 	 and 
 	 \[
 	  \|f\|_{h,\,grid}\le C\|f\|_{h,\,2}:=\left(\sum_{|\beta|\le 2}\|D^\beta f\|_{h,\,grid}^2\right)^{1/2}
 	 \]
   for any grid function $f$ vanishing outside $R_h$, where 
    $$D_jf(x) := \frac1h(f(x+he_j)-f(x)),\,\, j=1,\ldots,d$$ and
    $$D^\beta f:=D_1^{\beta_1}\cdots D_d^{\beta_d}f,\; \beta=(\beta_1,\ldots,\beta_d),\;  \beta_i \ge 0.$$
 	\end{lemma}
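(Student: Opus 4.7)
This is a discrete Poincar\'e-type inequality and the plan is to derive it by the classical trick of a telescoping identity along grid lines. Fix a direction $j\in\{1,\ldots,d\}$ and set $N:=\lceil\mathrm{diam}(V)/h\rceil$, so that $Nh\le\mathrm{diam}(V)+h\le C$. For any grid point $x\in h\Z^d$, the point $x+Nhe_j$ lies outside $V_h$ (since moving by $Nh$ exceeds the diameter of $V$); because $f$ vanishes outside $R_h\subset V_h$, we have $f(x+Nhe_j)=0$. Telescoping gives the identity
\[
f(x) = -h\sum_{k=0}^{N-1} D_j f(x+khe_j).
\]

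By Cauchy--Schwarz applied to this sum of $N$ terms, together with $Nh\le C$,
\[
|f(x)|^2\le Nh^2\sum_{k=0}^{N-1}|D_jf(x+khe_j)|^2 \le Ch\sum_{k=0}^{N-1}|D_jf(x+khe_j)|^2.
\]
Multiplying by $h^d$ and summing over $x\in h\Z^d$, then swapping the order of summation and performing the change of variable $y=x+khe_j$ in the inner sum, one gets
\[
\|f\|_{h,grid}^2 = h^d\sum_x |f(x)|^2 \le Ch^{d+1}\sum_{k=0}^{N-1}\sum_y |D_jf(y)|^2 = CNh\cdot h^d\sum_y|D_jf(y)|^2 \le C\|D_jf\|_{h,grid}^2,
\]
where the finiteness of $\sum_y|D_jf(y)|^2$ is guaranteed by the compact support of $f$. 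This proves the first inequality.

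The second inequality is essentially immediate: the multi-index $\beta=(0,\ldots,0)$ satisfies $|\beta|\le 2$ and $D^\beta f=f$, so $\|f\|_{h,grid}^2$ appears as one of the summands in $\|f\|_{h,2}^2$, giving $\|f\|_{h,grid}\le\|f\|_{h,2}$ trivially. (Alternatively, one can bypass the $\beta=0$ term and use the first inequality with $j=1$, together with the fact that $\|D_1 f\|_{h,grid}^2\le\|f\|_{h,2}^2$.) The main (mild) technical point in the whole argument is the double-sum exchange in the display above, where one must keep track that $D_jf$ has compact support and that each $y\in h\Z^d$ is counted exactly $N$ times as $k$ ranges from $0$ to $N-1$; everything else is routine.
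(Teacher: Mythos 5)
The paper does not actually prove this lemma; it is imported verbatim from Thom\'ee (Lemma~3.1 there) and used as a black box, so there is no in-paper proof to compare against. Your telescoping argument is the standard proof of this discrete Poincar\'e inequality, it is correct, and it is almost certainly the same as Thom\'ee's. Two minor points of precision are worth flagging. First, the claim that ``for any grid point $x\in h\Z^d$, the point $x+Nhe_j$ lies outside $V_h$'' is not literally true for all $x$: if $x$ is far on the $-e_j$ side, $x+Nhe_j$ can land inside $V_h$. What you actually need, and what does hold, is that $f(x+Nhe_j)=0$ whenever $f(x)\neq 0$, since $x$ and $x+Nhe_j$ cannot both lie in $\overline V$ once $Nh>\diam V$; and when $f(x)=0$ the pointwise estimate is vacuous. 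Second, with $N=\lceil\diam(V)/h\rceil$ you only get $Nh\ge\diam V$, and strict inequality may fail if $\diam(V)/h$ happens to be an integer; replacing $N$ by $N+1$ removes this edge case without changing the constant. Neither point affects the substance of the argument, and your treatment of the second estimate via the $\beta=0$ term is the cleanest route.
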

For the next result we need the definition of the operator $L_{h,2}$ from \cite{thomee} as follows:
 		$$L_{h,2} f(x) = \begin{cases}
 		L_h f(x)  & x\in R_h^\ast\\
 		h^2 L_h f(x)  & x\in B_h^\ast\\
 		0 & x\notin R_h.
 		\end{cases}$$

 	\begin{theorem}[{\citet[Theorem~4.2]{thomee}}]\label{thm:Thomee4.2}
 	  There exists a constant $C>0$ such that for 
 	    all grid functions $f$ vanishing outside $R_h$
 	    \[
 	     \|f\|_{h,\,2}\le C\|L_{h,\,2}f\|_{h,\,grid},
 	    \]
where $C$ is independent of $h$ as well. 
 	\end{theorem}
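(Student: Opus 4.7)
The plan is to establish a discrete G\aa{}rding-type coercivity inequality by combining discrete integration by parts with a careful treatment of the boundary layer through property $\mathcal{B}_2^\ast$. As a first reduction, since $f\in\mathcal D_h$, iterating the discrete Poincar\'e estimate of Lemma~\ref{lem:Thomee3.1} gives $\|f\|_{h,grid}\le C\|D_jf\|_{h,grid}\le C\|D_kD_jf\|_{h,grid}$, so in order to prove $\|f\|_{h,2}\le C\|L_{h,2}f\|_{h,grid}$ it is enough to bound the pure second-order part $\sum_{|\beta|=2}\|D^\beta f\|_{h,grid}^2$ by $C\|L_{h,2}f\|_{h,grid}^2$.

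The next step is to relate $\sum_{|\beta|=2}\|D^\beta f\|_{h,grid}^2$ to the quadratic form $\langle L_h f, f\rangle_{h,grid}$. Because $f$ has compact support inside $R_h$, summation by parts on $h\Z^d$ is boundary-free, and I would derive the identity
\[
\langle L_h f, f\rangle_{h,grid}=\|\Delta_h f\|_{h,grid}^2 = c\sum_{|\beta|=2}\|D^\beta f\|_{h,grid}^2
\]
with $c>0$ independent of $h$ and $f$, the second equality being the discrete analog of $\int(\Delta u)^2=\sum_{i,j}\int(\partial_i\partial_j u)^2$ for $u\in H^2_0$, which is again obtained by two summations by parts without boundary terms.

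To replace $L_h$ by $L_{h,2}$, I would split the sum according to the decomposition $R_h=R_h^\ast\cup B_h^\ast$ and apply Cauchy--Schwarz with the natural weights dictated by the definition of $L_{h,2}$ (which multiplies $L_h$ by $h^2$ on $B_h^\ast$). A short computation then produces
\[
|\langle L_h f, f\rangle_{h,grid}|\le \|L_{h,2}f\|_{h,grid}\,\Bigl(\|f\|_{h,grid}^2+h^{d-4}\!\!\sum_{x\in B_h^\ast}\!\!f(x)^2\Bigr)^{\!1/2};
\]
the weight $h^{d-4}$ on the boundary-layer sum is exactly what the extra $h^2$ on $B_h^\ast$ forces upon the Cauchy--Schwarz partner.

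The main obstacle is to control the boundary-layer quantity $h^{d-4}\sum_{B_h^\ast}f^2$ by the discrete $H^2$-norm of $f$, i.e.\ a discrete trace-type inequality. This is where property $\mathcal{B}_2^\ast$ becomes indispensable: for every $\xi\in B_h^\ast$ it supplies a half-ray through $\xi$ meeting, within distance $Kh$, two consecutive points of $B_h$ on which $f$ vanishes. A one-dimensional discrete Taylor expansion along that ray (its direction depending on $\xi$ but its length uniformly bounded by $K$) represents $f(\xi)$ as $h^2$ times a linear combination of second-order differences of $f$ close to $\xi$; squaring, summing over $\xi\in B_h^\ast$, and using the uniform bound $K$ to count the (finitely many) overlapping contributions yields
\[
h^{d-4}\sum_{x\in B_h^\ast}f(x)^2\le C\sum_{|\beta|\le 2}\|D^\beta f\|_{h,grid}^2,
\]
which is exactly the content of Lemma~3.4 in \cite{thomee}. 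Inserted in the previous display, this gives $\sum_{|\beta|=2}\|D^\beta f\|_{h,grid}^2\le C\|L_{h,2}f\|_{h,grid}\,\|f\|_{h,2}$, and combining with the first-step reduction leads to $\|f\|_{h,2}^2\le C\|L_{h,2}f\|_{h,grid}\,\|f\|_{h,2}$, from which the claim follows by dividing through. The genuinely delicate points are the uniform-in-$h$ constants in the trace bound and the geometric verification of $\mathcal{B}_2^\ast$ itself (Proposition~\ref{prop:B2star}); both are nontrivial and together constitute the real content of the theorem.
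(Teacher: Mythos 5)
The paper does not give a proof of this statement: Theorem~\ref{thm:Thomee4.2} is quoted verbatim from \citet[Theorem~4.2]{thomee}, and the only related material in the present paper is the remark in Appendix~\ref{appendix:T} that property $\mathcal B_2^*$ is what lets one invoke Thom\'ee's Lemma~3.4, together with Proposition~\ref{prop:B2star} verifying $\mathcal B_2^*$ under the UEBC. So there is no in-paper proof to compare against, only a cited result.

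That said, your reconstruction is a sound and plausible account of Thom\'ee's argument, and it is consistent with the fragments the paper does reveal (Lemma~\ref{lem:Thomee3.1} for the discrete Poincar\'e reduction, and \citet[Lemma~3.4]{thomee} for the boundary-layer trace bound driven by $\mathcal B_2^*$). The weighted Cauchy--Schwarz producing the $h^{d-4}\sum_{B_h^*}f^2$ term from the $h^2$ normalisation of $L_{h,2}$ on $B_h^*$, the one-dimensional discrete Taylor expansion along the $\mathcal B_2^*$ ray, and the final absorption step are exactly the ingredients one expects. Two small imprecisions are worth flagging, neither fatal. First, the identity $\|\Delta_h f\|_{h,grid}^2=c\sum_{|\beta|=2}\|D^\beta f\|_{h,grid}^2$ is not an equality with a single constant $c$: a Parseval computation gives $\|\Delta_h f\|^2=\sum_j\|D_j^2f\|^2+2\sum_{i<j}\|D_iD_jf\|^2$, so the correct statement is a two-sided equivalence (with constants $1$ and $2$); this costs only a fixed factor and does not affect the argument, but as written it reads as a clean identity, which it is not. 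Second, iterating Lemma~\ref{lem:Thomee3.1} on $D_j f$ requires noting that $D_j f$ no longer vanishes outside $R_h$ but only outside a set one lattice step larger; the discrete Poincar\'e inequality holds for any compactly supported grid function, so this is fine, but it deserves a sentence since the lemma as stated assumes support in $R_h$.
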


We have now all the ingredients to show the following.
 	\begin{theorem}\label{thm:one}
 		Let $u\in {C}^5(\overline{V})$ be the solution of the Dirichlet's problem~\ref{eqa:continuum} 
 		and $u_h$ be the solution of the discrete problem ~\eqref{eq:discrete}-\eqref{eq:discrete boundary}. If $e_h:=u-u_h$ then we have for all sufficiently small $h$
 		$$\lVert R_he_h\rVert_{h,\,grid}^2\le C\left[M_5^2h^2 + h(M_5^2h^6+ M_2^2)\right]$$
		where $M_k=\sum_{\lvert\alpha\rvert\le k}\sup_{x\in V}\lvert D^\alpha u(x)\rvert$.
 	\end{theorem}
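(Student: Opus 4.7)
The plan is to reduce the claim to a discrete truncation-error estimate via Thom\'ee's coercivity result. Since $R_h e_h$ vanishes outside $R_h$, combining Lemma~\ref{lem:Thomee3.1} and Theorem~\ref{thm:Thomee4.2} gives
\[
\|R_h e_h\|_{h,grid}^2 \le C\,\|R_h e_h\|_{h,2}^2 \le C\,\|L_{h,2}(R_h e_h)\|_{h,grid}^2,
\]
so it suffices to show that
\[
\|L_{h,2}(R_h e_h)\|_{h,grid}^2 = h^d\sum_{x\in R_h^\ast}|L_h(R_h e_h)(x)|^2 + h^{d+4}\sum_{x\in B_h^\ast}|L_h(R_h e_h)(x)|^2
\]
is bounded by the right-hand side in the statement. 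I will handle the interior ($R_h^\ast$) and near-boundary ($B_h^\ast$) contributions separately.

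For $x\in R_h^\ast$ every point of the $L_h$-stencil at $x$ lies in $R_h$, so $L_h(R_h e_h)(x)=L_h e_h(x)=L_h u(x)-f(x)=L_h u(x)-Lu(x)$, using $L_h u_h\equiv f$ on $R_h$ and $Lu\equiv f$ on $V$. A Taylor expansion around $x$, exploiting the symmetry of the $\Delta_h^2$-stencil under $y\mapsto -y$ (odd-order Taylor terms vanish and the degree-$4$ terms exactly reproduce $Lu$) together with the $C^5$-regularity of $u$, yields the pointwise consistency bound $|L_h u(x)-Lu(x)|\le ChM_5$. Since $|R_h^\ast|=O(h^{-d})$, the interior contribution is at most $CM_5^2h^2$.

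For the near-boundary sum I decompose $R_h e_h=e_h-g$ with $g:=e_h-R_h e_h$. Because $u_h\equiv 0$ on $B_h$, the function $g$ is supported in $B_h$ and coincides there with $u$; thus $L_h(R_h e_h)(x)=L_h e_h(x)-L_h g(x)$. At $x\in B_h^\ast$ the first piece is again $O(hM_5)$ by the same consistency argument, since the $L_h$-stencil at a point of $R_h$ stays inside $\overline V$. For the second piece I use the geometric fact that every $y\in B_h$ lies within distance $\sqrt 2\,h$ of $\partial V$ (some stencil neighbour of $y$ fails to be in $\overline V$), combined with the Dirichlet conditions $u=\nabla u=0$ on $\partial V$; a second-order Taylor expansion then gives $|u(y)|\le Ch^2 M_2$. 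The stencil coefficients of $L_h$ being $O(h^{-4})$, this yields $|L_h g(x)|\le Ch^{-2}M_2$, and multiplying by the weight $h^2$ in $L_{h,2}$ produces $|h^2 L_h(R_h e_h)(x)|\le C(M_2+h^3 M_5)$. With $|B_h^\ast|=O(h^{-(d-1)})$ the boundary sum is bounded by $Ch(M_2^2+M_5^2 h^6)$, which together with the interior bound delivers exactly the claim.

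The main obstacle is the boundary layer: without the $h^2$ weighting present in $L_{h,2}$ on $B_h^\ast$ the estimate $|L_h g|=O(h^{-2}M_2)$ would be far from summable. What redeems it is the geometric fact that $B_h$ sits within $O(h)$ of $\partial V$, together with the homogeneous Dirichlet conditions for both $u$ and $\nabla u$, which provide the missing $h^2$ on $u|_{B_h}$. A minor subtlety is that the standard $O(h^2)$ consistency of the bilaplacian stencil usually requires $C^6$; the $C^5$ hypothesis forces the weaker $O(hM_5)$ bound, but that is precisely what matches the leading term $M_5^2h^2$ in the claim.
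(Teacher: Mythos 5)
Your proof is correct and follows essentially the same approach as the paper: reduce to the weighted truncation error $\|L_{h,2}(R_h e_h)\|_{h,grid}$ via Lemma~\ref{lem:Thomee3.1} and Theorem~\ref{thm:Thomee4.2}, use the Taylor consistency bound $|L_h u - Lu| \le C h M_5$ at interior points of $R_h^*$, and at $B_h^*$ absorb the $O(h^{-2}M_2)$ contribution from the stencil points in $B_h$ (where $u=O(h^2 M_2)$ by the Dirichlet conditions) with the $h^2$ weight in $L_{h,2}$, then sum using $|B_h^*|=O(h^{-(d-1)})$. Your decomposition $R_h e_h = e_h - g$ is just a notational rephrasing of the paper's direct computation of $L_h R_h e_h(\xi)$ on $B_h^*$.
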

 
 \begin{proof}
 		We denote all constants by $C$ and they do not depend on 
 		$u,\, f$. Using Taylor's expansion we have for all $x\in R_h$ and for small $h$
 		$$L_hu(x)=Lu(x)+h^{-4}\mathcal R_5(x)$$
 		where $\lvert \mathcal R_5(x)\rvert\leq CM_5h^5$. We obtain for $\xi\in R_h$,
 		\begin{align*}
 		L_he_h(\xi)&=L_hu(\xi)-L_hu_h(\xi)\\
 		&=Lu(\xi)+h^{-4}\mathcal R_5(\xi)-L_hu_h(\xi)=h^{-4}\mathcal R_5(\xi).
 		\end{align*}
 		 		For $\xi\in R^*_h$ we have
 		\begin{align*}
 		L_{h,2}R_he_h(\xi)&= L_hR_he_h(\xi)=L_he_h(\xi)=h^{-4}\mathcal R_5(\xi).
 		\end{align*}
 		For $\xi\in B^*_h$ at least one among $\xi\pm h(e_i\pm e_j),\,\xi\pm he_i$ is in $B_h$. For any $\eta \in B_h\setminus \partial V$ we consider a point $b(\eta)$ on $\partial V$ of minimal distance to $\eta$. Note that this distance is at most $2h$. Now using Taylor expansion and the fact that the value of $u$ and all its first order derivatives are zero at $b(\eta)$ one sees that $$u(\eta)=u_h(\eta)+\mathcal R_2(\eta)$$ where $\lvert \mathcal R_2(\eta)\rvert\leq CM_2h^2$. For $\xi\in B_h^\ast$ denote by $$S_{i,j}(\xi)= \{ \eta: \eta\in B_h\setminus (B_h\cap \partial V) \cap \{ \xi\pm h e_i, \xi\pm h(e_i\pm e_j)\}\}.$$ Therefore, for $\xi\in B_h^\ast$,
 		\begin{align*}
 		L_{h,2} R_he_h(\xi)&= h^2 L_h R_he_h(\xi)\\
 		&=h^2 \left\{L_he_h(\xi)- h^{-4} \sum_{i,j=1}^d \sum_{\eta\in S_{i,j}(\xi)} C(\eta) e_h(\eta)\right\}\\
 		&= h^{-2}\mathcal R_5(\xi) + h^{-2} C\mathcal R^{'}_2(\xi)
 		\end{align*}
 	    where $C(\eta)$ is a constant depending on $\eta$ and $\lvert \mathcal R^{'}_2(\xi)\rvert\leq CM_2h^2$. Hence
 	    \begin{align*}
 	    \|L_{h,2} R_he_h\|_{h,grid}^2 &= h^d\sum_{x\in R_h} (L_{h,2} R_he_h(x))^2\\
 	    &= h^d \left[ \sum_{x\in R_h^\ast} (L_{h,2} R_he_h(x))^2+ \sum_{x\in B_h^\ast} (L_{h,2} R_he_h(x))^2 \right]\\
 	    &= h^d \left[ \sum_{x\in R_h^\ast} (h^{-4}\mathcal R_5(x))^2 +\sum_{x\in B_h^\ast} (h^{-2}\mathcal R_5(x) + h^{-2} C\mathcal R^{'}_2(x) )^2\right]\\
 	    &\le  h^d \left[ \sum_{x\in R_h^\ast} CM_5^2h^2 +\sum_{x\in B_h^\ast} (CM_5^2h^6+ CM_2^2)\right]\\
 	    &\le C\left[M_5^2h^2 + h(M_5^2h^6+ M_2^2)\right]
 	    \end{align*}
 	    where in the last inequality we have used that the number of points in $B_h^\ast$ is $O(h^{-(d-1)})$ following from \citet[Lemma 5.4]{penrose_rgg} and the assumption of a $C^2$ boundary. 
 	    Finally to complete our proof we obtain
 	    \begin{align}\label{eq:errorbound}
 	    \|R_he_h\|_{h,\,grid}^2 \le C\left[M_5^2h^2 + h(M_5^2h^6+ M_2^2)\right]
 	    \end{align}
 	    using Lemma~\ref{lem:Thomee3.1} and Theorem~\ref{thm:Thomee4.2}. This concludes the proof.  
 \end{proof}
 
 \section{}\label{appendix:B}
 Now we provide a proof of Proposition~\ref{prop:B2star}.  
 
\begin{proof}[Proof of Proposition~\ref{prop:B2star}]
If $d=1$ then it is easy to see from the definition that $\mathcal B_2^*$ holds. So we assume $d\ge 2$. For any $y\in V_h$ we denote by $N(y)$ the neighbourhood of $y$, that is, $$N(y):=\{ y\pm he_i, y\pm he_i \pm he_j: 1\le i,j\le d\}.$$
	We consider in fact a second-nearest neighbourhood in the graph distance, due to the interaction of the discrete bilaplacian and Thom\'ee's definition of neighbour. Let us now recall the definitions:
	\begin{align*}
	&V_h=\overline V\cap h\Z^d,\\
	&R_h= \{x\in V_h : N(x)\subseteq V_h\},\\
	&B_h=V_h\setminus R_h,\\
	&R_h^*=\{x\in R_h : N(x)\subseteq R_h\},\\
	&B_h^*=R_h\setminus R_h^*.
	\end{align*}
	Thus $V_h=B_h\cup B_h^* \cup R_h^*$.
	We want to show that for sufficiently small $h$ the following holds: for any $x\in B_h^*$ there exists $i\in \{1,\ldots, d\}$ such that any two consecutive points of either $\{x+he_i,\, x+2he_i,\, x+3he_i,\, x+4he_i\}$ or $\{x-he_i,\, x-2he_i,\, x-3he_i,\, x-4he_i\}$ belong to $B_h$. The proof is done on a case-by-case basis. We prove the existence of two consecutive points by broadly considering the following two possibilities:
	\begin{itemize}[leftmargin=*]
		\item  suppose $x\in B_h^*$ is such that $\mathrm{dist}(x,B_h)=1$, then we get an $i_0\in\{1,\,\ldots,\,d\}$ so that either $x+he_{i_0}$, $x+2he_{i_0} \in B_h$ or $x-he_{i_0}$, $x-2he_{i_0} \in B_h$.
		
		\item  Now suppose $x\in B_h^*$ is such that $\mathrm{dist}(x,B_h)=2$. In this case if $\{ x\pm 2he_i: 1\le i \le d\} \cap B_h$ is non-empty then we get an $i_0\in\{1,\,\ldots,\,d\}$ so that either $x+2he_{i_0}, x+3he_{i_0} \in B_h$ or $x-2he_{i_0}, x-3he_{i_0} \in B_h$. 
		Otherwise, $\{ x\pm 2he_i: 1\le i \le d\} \cap B_h$ is empty and $\{ x\pm he_i \pm he_j: 1\le i,j \le d, i\neq j\} \cap B_h$ is non-empty. And then we extract an $i_0$ so that either $x+3he_{i_0}, x+4he_{i_0} \in B_h$ or $x-3he_{i_0}, x-4he_{i_0} \in B_h$.
	\end{itemize} 
	In the process of obtaining these suitable points, we rule out some of the cases which do not arise due to the regularity of the boundary.            
	
	Fix $x\in B_h^*$. Then $N(x)\subset V_h$ and $N(x)\cap B_h \neq \emptyset$.
	\begin{enumerate}[ref=\arabic*,label*=\arabic*.,leftmargin=*]
		\item\label{item:case1} Suppose $\{ x\pm he_i : 1\le i\le d\} \cap B_h \neq \emptyset$. We assume for simplicity that $x+he_1 \in B_h$ as the argument will be similar for other directions. If $x+2he_1\in B_h$, then there is nothing to prove. More elaborate is the case when $x+2he_1\in R_h$. Then we have 
		\begin{align*}
		&N(x)
		\subseteq \overline V,\\
		&N(x+he_1)
		\nsubseteq \overline V,\\
		&N(x+2he_1)
		\subseteq \overline V.
		\end{align*}
		Observe that from the preceding inclusions we must have 
		\begin{equation}\label{eq:difficult}
		\{ x+he_1\pm he_i \pm he_j: 2\le i,j\le d\} \nsubseteq \overline V.
		\end{equation}
		We now partition this set into 2 subsets and argue separately. 
		\begin{enumerate}[label*=\arabic*.]
			\item Suppose $\{ x+he_1\pm 2he_i : 2\le i\le d\} \nsubseteq \overline V$. Let us assume that $x+he_1+2he_2 \notin \overline V$. Then by definition of $B_h$ we have $x+he_2,\, x+2he_2 \in B_h$ and we are done. Similar is the case for other points.
			
			\item\label{item:case1b} We are left with the situation where $\{ x+he_1\pm 2he_i : 2\le i\le d\} \subset \overline V$ and $\{ x+he_1\pm he_i \pm he_j: 2\le i,j\le d, i\neq j\} \nsubseteq \overline V$. Note that this situation is not possible in $d=2$ and hence from now we consider $d\ge 3$ for this subcase. 
			
			Again we continue with a particular choice $x+he_1+he_2+he_3 \notin \overline V$. The other occurrences can be handled similarly. Note that with this choice we have $x+he_2,\, x+he_3\in B_h$. So if at least one between $x+2he_2$ and $ x+2he_3$ belongs to $B_h$ then we are done. Otherwise we have the following situation: 
			\begin{align*}
			\{x,\, x+2he_1,\, x+2he_2,\, x+2he_3\}\subset  R_h,\\
			\{x+he_1,\, x+he_2,\, x+he_3 \}\subset B_h,\\
			\{ x+he_1\pm 2he_i : 2\le i\le d\} \subseteq \overline V
			\end{align*}
			and $x+he_1+he_2+he_3 \notin \overline V$. Note here that the point $x+he_1+he_2+he_3$, which is at graph distance $3$ from $x$, is not in $\overline V$. However its nearby points $\{x+2he_1+he_2+he_3,\,x+he_2+he_3,\,x+he_1+2he_2+he_3,\, x+he_1+he_3,\, x+he_1+he_2+2he_3,\, x+he_1+he_2\}$ stay inside $\overline V$. We show that such a situation cannot happen due to the UEBC. 
			Indeed, since the domain satisfies UEBC, we can find for small $h$ a ball $B_\delta(c)$ for some $c\in\R^d$ such that $x+he_1+he_2+he_3 \in B_\delta(c)$ and $\overline{B_\delta(c)}\cap \overline V = \{y\}$ for some $y\in\partial V$. Clearly, if $x=(x_1,\,\ldots,\,x_d)$ and $c=(c_1,\,\ldots,\,c_d)$ then
			\begin{equation}\label{eq1}
			\sum_{i=1}^d (c_i-x_i)^2 > \delta^2
			\end{equation}
			and 
			\begin{equation}\label{eq2}
			\sum_{i=1}^3 (c_i-x_i-h)^2 + \sum_{i=4}^d (c_i-x_i)^2 < \delta^2.
			\end{equation}
			Since $x+2he_1+he_2+he_3,\,x+he_2+he_3 \in\overline V$ we have
			\begin{equation}\label{eq3}
			(c_1-x_1-2h)^2+(c_2-x_2-h)^2 +(c_3-x_3-h)^2 + \sum_{i=4}^d (c_i-x_i)^2 \ge \delta^2,
			\end{equation}
			\begin{equation}\label{eq4}
			(c_1-x_1)^2+(c_2-x_2-h)^2 +(c_3-x_3-h)^2 + \sum_{i=4}^d (c_i-x_i)^2 \ge \delta^2.
			\end{equation}
			Now subtracting \eqref{eq3}, respectively~\eqref{eq4}, from ~\eqref{eq2} we get, respectively,
			\begin{align*}
			&(2c_1-2x_1-3h) h \le 0 ,\\
			&(2c_1-2x_1-h)(-h) \le 0 .
			\end{align*}
			Hence
			\begin{equation}\label{eq5}
			(c_1-x_1)^2 \le \frac{9h^2}{4}.
			\end{equation}
			Similarly using the points $x+he_1+2he_2+he_3,\, x+h_1+h_3,\, x+he_1+he_2+2he_3,\, x+he_1+he_2$ in $\overline V$ we obtain
			\begin{equation}\label{eq6}
			(c_2-x_2)^2 \le \frac{9h^2}{4},
			\end{equation}
			\begin{equation}\label{eq7}
			(c_3-x_3)^2 \le \frac{9h^2}{4}.
			\end{equation}
			We now observe that 
			\begin{equation}\label{eq:bad_weather}
			x+he_1\pm he_4\in N(x)\subseteq \overline V.
			\end{equation}
			Consequently
			\begin{align}
			(c_1-x_1-h)^2&+(c_2-x_2)^2 +(c_3-x_3)^2 + (c_4-x_4-h)^2 \nonumber\\
			&+\sum_{i=5}^d (c_i-x_i)^2 \ge \delta^2\label{eq8}
			\end{align}
			and 
			\begin{align}
			(c_1-x_1-h)^2 &+(c_2-x_2)^2 +(c_3-x_3)^2 + (c_4-x_4+h)^2 \nonumber\\
			&+\sum_{i=5}^d (c_i-x_i)^2 \ge \delta^2.\label{eq9}
			\end{align}
			Subtracting ~\eqref{eq8} from ~\eqref{eq2} we derive, after a few simple manipulations,
			\begin{align*}
			(c_4-x_4) \le \frac{11h}{4}.
			\end{align*}
			Similarly subtracting~\eqref{eq9} from~\eqref{eq2} we obtain
			\begin{align*}
			(c_4-x_4) \ge -\frac{11h}{4}.
			\end{align*}
			Thus
			\begin{equation*}
			(c_4-x_4)^2 \le \frac{121 h^2}{16}.
			\end{equation*}
			Re-running the above argument considering $x+he_1\pm he_i\in \overline V$, $5\le i\le d$, in place of $x+he_1\pm he_4$ in~\eqref{eq:bad_weather}, and using equations similar to~\eqref{eq8} and~\eqref{eq9} we obtain all in all that
			\begin{equation}\label{eq10}
			(c_i-x_i)^2 \le \frac{121 h^2}{16},\quad i=4,\ldots, d.
			\end{equation}
			Finally we observe that, for small enough $h$, ~\eqref{eq6},~\eqref{eq7} and~\eqref{eq10} together contradict ~\eqref{eq1}. This completes Case ~\ref{item:case1}.
		\end{enumerate}
		\item\label{item:case2} For this case we have $\{ x\pm he_i : 1\le i\le d\}\cap B_h = \emptyset$ but $\{ x\pm he_i \pm he_j: 1\le i,j\le d\} \cap B_h \neq \emptyset$. Here also we consider two subcases.
		\begin{enumerate}[ref=\arabic*,label*=\arabic*.,leftmargin=*]
			\item First we consider the subcase when $\{ x\pm 2he_i : 1\le i \le d\} \cap B_h \neq \emptyset$. For simplicity we continue with a particular choice $x+2he_1\in B_h$. In this case if $x+3he_1\in B_h$ then we are done. So we assume $x+3he_1\in R_h$. Observe that
			\begin{align*}
			&N(x+2he_1) \nsubseteq \overline V\\
			&N(x\pm he_i), \, N(x+3he_1) \subseteq \overline V
			\end{align*}
			which imply that we must have 
			\begin{equation}\label{eq:21}
			\{ x+2he_1\pm he_i \pm he_j: 1 < i,j\le d\} \nsubseteq \overline V.
			\end{equation} 
			
			We consider two different situations.
			
			\begin{enumerate}[labelindent=0pt,ref=\arabic*,label*=\arabic*.,leftmargin=*]
				\item\label{item:case2a1} Let us first consider the situation when $\{ x+2he_1\pm 2he_i: 1 < i\le d\} \nsubseteq \overline V.$ In particular we consider without loss of generality $x+2he_1+2he_2\notin \overline V$. Note that this implies $x+2he_2 \in B_h$. So if $x+3he_2 \in B_h$ then we are done. Otherwise we have $x+3he_2 \in R_h$. But in this case we see that $x+2he_1+2he_2\notin \overline V$ and its nearby points $\{x+3he_1+2he_2, x+he_1+2he_2, x+2he_1+3he_2, x+2he_1+he_2, x+2he_1+he_2\pm he_i : 3\le i\le d\}$ stay inside $\overline V$. It can be shown that this case is impossible by UEBC with a similar argument as in Case~\ref{item:case1b}
				
				\item\label{item:case2a2} We now consider the other situation (note the such a situation does not appear in $d=2$) when $\{ x+2he_1\pm 2he_i: 1 < i\le d\} \subseteq \overline V$. So using~\eqref{eq:21} without loss of generality we choose a particular element, say $x+2he_1+he_2+he_3 \notin\overline V$. One can show that this situation is not possible for small enough $h$ by arguments similar to Case~\ref{item:case1b} with the observation $\{x+3he_1+he_2+he_3, x+he_1+he_2+he_3, x+2he_2+he_3, x+he_3, x+he_2 + 2he_3, x+he_2, x+h_2+h_3\pm he_i: 4\le i\le d\} \subseteq \overline V$.
			\end{enumerate}
			\item\label{item:case2b} We are left with the subcase when 
			\begin{align}
			&\{ x\pm 2he_i : 1\le i \le d\} \cap B_h = \emptyset, \nonumber\\
			&\{ x\pm he_i \pm he_j: 1\le i,j\le d, i\neq j\} \cap B_h \neq \emptyset.\label{eq:22}
			\end{align}
			Now consider points which are of the form $\{ x\pm 3he_i : 1\le i \le d\}$ and  depending on whether they have non-empty intersection with $B_h$ one can split the argument into two further cases. We use points of the above form as their neighbourhoods contain points which are at graph distance $5$ from $x$ in certain directions.
			\begin{enumerate}[label*=\arabic*.,leftmargin=*]
				\item 
				First we consider the case when $\{ x\pm 3he_i : 1\le i \le d\} \cap B_h \neq \emptyset$. If say, $x+3he_1\in B_h$ then it must be that $x+4he_1\in B_h$ too. Indeed, were this not true one would have 
				\begin{align*}
				& N(x+3he_1)\nsubseteq \overline V,\\
				&N(x+4he_1)\subseteq \overline V,\\
				& N(x+2he_1)\subseteq \overline V.
				\end{align*}
				From these equations we observe that one would have $\{x+3he_1\pm he_i\pm he_j: 1< i,j \le d\}\nsubseteq \overline V$. Now this would give rise to a contradiction by similar argument used in Case~\ref{item:case1b}
				\item We now focus on the case when  
				\begin{equation}\label{eq:23}
				\{ x\pm 3he_i : 1\le i \le d\} \cap B_h = \emptyset.
				\end{equation}
				
				We show that this situation can not arise. To keep the argument simple,  using~\eqref{eq:22}, we assume without loss of generality $x+he_1+he_2\in B_h$. Then 
				$$\{x+he_1+he_2\pm he_i,\, x+he_1+he_2\pm he_i\pm he_j: 1\le i,j \le d\} \nsubseteq \overline V.$$
				Since we are in Case~\ref{item:case2} and \eqref{eq:22}-\eqref{eq:23} hold we have $$N(x\pm he_i), \,N(x\pm 2he_i),\, N(x\pm 3he_i) \subseteq \overline V\text{ for all $i$},$$
				so it must be that 
				\begin{equation}\label{eq:24}
				\{x+he_1+he_2\pm he_i\pm he_j: 3\le i,j \le d, i\neq j\} \nsubseteq \overline V.
				\end{equation}
				Notice that such a situation cannot arise in $d=3$ and  hence we concentrate on $d\ge 4$. To analyse the situation arising out of~\eqref{eq:24}, we suppose $$x+he_1+he_2+he_3+he_4 \notin \overline V.$$ 
				Note that here we cannot follow the steps of Case~\ref{item:case1b} because we do not know if any of the points $x+2he_1+he_2+he_3+he_4,\,x+he_1+2he_2+he_3+he_4,\, x+he_1+he_2+2he_3+he_4, \,x+he_1+he_2+he_3+2he_4$ are in $\overline V$. So we argue in a slightly different way. 
				
				By UEBC for $h$ small enough we can find a ball $B_\delta(c)$ for some $c\in\R^d$ such that $x+he_1+he_2+he_3+he_4 \in B_\delta(c)$ and $\overline{B_\delta(c)}\cap \overline V = \{y\}$ for some $y\in\partial V$. Clearly, if $x=(x_1,\,\ldots,\,x_d)$ and $c=(c_1,\,\ldots,\,c_d)$ then
				\begin{equation}\label{eq11}
				\sum_{i=1}^d (c_i-x_i)^2 > \delta^2
				\end{equation}
				and 
				\begin{equation}\label{eq12}
				\sum_{i=1}^4 (c_i-x_i-h)^2 + \sum_{i=5}^d (c_i-x_i)^2 < \delta^2.
				\end{equation}
				Also $x+he_2+he_3+he_4\in\overline V$ gives
				\begin{equation}\label{eq13}
				(c_1-x_1)^2 + \sum_{i=2}^4 (c_i-x_i-h)^2 + \sum_{i=5}^d (c_i-x_i)^2 \ge \delta^2.
				\end{equation}
				Subtracting ~\eqref{eq13} from ~\eqref{eq12} we get
				\begin{align*}
				c_1-x_1 \ge \frac{h}2.
				\end{align*}
				Similarly we obtain
				\begin{align*}
				c_i-x_i \ge \frac{h}2,\quad i=2,\,3,\,4.
				\end{align*}
				Now we impose a condition on the maximum value of $\{c_i-x_i: i=1,\,2,\,3,\,4\}$ and see that when it is bounded by a factor of $h$ one gets a contradiction. Let $c_k-x_k=\max\{c_i-x_i: i=1,\,2,\,3,\,4\}$. First suppose $c_k-x_k \le 7h/2$. Then we have
				\begin{align*}
				(c_i-x_i)^2 \le \frac{49h^2}4,\quad i=1,\,2,\,3,\,4.
				\end{align*}
				Now using $\{x+he_1+he_2\pm he_j: 5\le j\le d\}\subseteq \overline V$ we deduce 
				\begin{align*}
				(c_j-x_j)^2 \le Ch^2,\quad 5\le j \le d.
				\end{align*}
				where $C$ is a constant depending on $d$. Thus we obtain
				\begin{align*}
				\sum_{i=1}^d(c_i-x_i)^2 \le Ch^2
				\end{align*}
				for some constant $C$. This contradicts ~\eqref{eq11} for small enough $h$. Now suppose we are not in the above situation, that is, $c_k-x_k > 7h/2$. For simplicity let $k=4$. Then we find a contradiction by observing that the point $x+he_2+he_3+3he_4$ can not lie in $\overline V$. Indeed, we have
				\begin{align*}
				&(c_1-x_1)^2+(c_2-x_2-h)^2+(c_3-x_3-h)^2+(c_4-x_4-3h)^2 + \sum_{i=5}^d (c_i-x_i)^2\\
				&\qquad\qquad -\sum_{i=1}^4 (c_i-x_i-h)^2 - \sum_{i=5}^d (c_i-x_i)^2\\
				&=(c_1-x_1)^2-(c_1-x_1-h)^2 + (c_4-x_4-3h)^2-(c_4-x_4-h)^2\\
				&= -h[2(c_4-x_4)-7h +2((c_4-x_4)-(c_1-x_1))] <0.
				\end{align*}
				Thus
				\begin{align*}
				&(c_1-x_1)^2+(c_2-x_2-h)^2+(c_3-x_3-h)^2+(c_4-x_4-3h)^2 + \sum_{i=5}^d (c_i-x_i)^2 < \delta^2.
				\end{align*}
				This implies that $x+he_2+he_3+3he_4 \in B_\delta(c)$ which is impossible as $x+he_2+he_3+3he_4\in N(x+3he_4)\subseteq \overline V$.
				This completes the proof.\qedhere
			\end{enumerate}
		\end{enumerate}
	\end{enumerate}
\end{proof}

\bibliographystyle{abbrvnat}
\bibliography{biblio}

\end{document}